\documentclass[11pt,preprint,reqno]{amsart} 
\usepackage{amsmath,amssymb,amsthm}
\usepackage{mathscinet} 
\usepackage[usenames,dvipsnames]{xcolor}
\usepackage[mathscr]{eucal}

\usepackage[latin1]{inputenc}


\usepackage{syntonly}  

\usepackage{dsfont} 
\usepackage{graphicx}
\usepackage{xcolor}
\usepackage{enumitem}

\setlength\oddsidemargin {-30pt}\setlength\evensidemargin{-30pt}
\setlength{\textwidth}{170mm}\setlength{\textheight}{210mm}
\headheight=12.45pt
\newtheorem{theorem}{Theorem}

\newtheorem{proposition}[theorem]{Proposition}
\newtheorem{definition}[theorem]{Definition}

\newtheorem{lemma}[theorem]{Lemma}

\newtheorem{remark}{Remark}

\numberwithin{equation}{section}
\def\qed{\hbox{${\vcenter{\vbox{		 
   \hrule height 0.4pt\hbox{\vrule width 0.4pt height 6pt
   \kern5pt\vrule width 0.4pt}\hrule height 0.4pt}}}$}}

\def\cF{\mathcal F}

\def\cH{\mathcal H}

\def\bD{\mathbb D}
\def\bE{\mathbb E}

\def\bR{\mathbb R}

\def\uh{\underline{h}}
\def\uu{\underline{u}}
\def\uz{\underline{z}}

%

%

%

\newcommand{\commentout}[1]{}


%

%

\def\uu{\underline u}

\newcommand{\comment}[1]{$\null$}

\begin{document}

\title[\emph{Absolute continuity of the law for the 2D stochastic Navier-Stokes equations.
}]{\emph{Absolute continuity of the law for the two dimensional stochastic Navier-Stokes equations }}

\author{Benedetta Ferrario, Margherita Zanella}
\address{Benedetta Ferrario, Margherita Zanella \newline Università di Pavia, Dipartimento di Matematica "F. Casorati", via Ferrata 5, 27100 Pavia, Italy}
\email{benedetta.ferrario@unipv.it, margherita.zanella01@ateneopv.it}

\subjclass[2000]{60H07, 60H15, 35Q30}
\keywords{Malliavin calculus, density of the solution, Gaussian noise, stochastic Navier-Stokes equations}

\begin{abstract}
We consider the two dimensional  Navier-Stokes equations in
vorticity form with a stochastic forcing term given by a 
gaussian noise, white in time and coloured in space.
First, we prove existence and uniqueness of a weak (in the Walsh sense) solution process $\xi$ and we show that, 
if the initial vorticity $\xi_0$ is continuous in space, then there exists a  space-time continuous 
version of the solution.
In addition we show that the solution $\xi(t,x)$ (evaluated at fixed points in time and space) 
is locally differentiable in the Malliavin calculus sense and that its image law
 is absolutely continuous with respect to the Lebesgue measure on $\mathbb{R}$. 
\end{abstract}

\maketitle

\def\uh{\underline{h}}
\def\uu{\underline{u}}
\def\uz{\underline{z}}

\section{Introduction}
The analysis of stochastic partial differential equations concerns problems of existence, 
uniqueness and properties of the solution processes.
In particular, there has been a lot of activity in the last years studying the regularity in the Malliavin 
sense for solutions to stochastic partial differential equations. 
Here we are interested in looking for the existence of a density for the law of the random variable 
given by the  solution process at  fixed points in time and space. This property is important in the analysis
of hitting probabilities (see \cite{DKN2009,DS2010}) and concentration inequalities (see \cite{NV2009}).
Most of the  literature on this subject concerns the heat and wave equations 
(see e.g. \cite{Nualart2006}, \cite{Bally1998}, \cite{Mueller2008},
\cite{Sanz_Sardanyons}, \cite{Marquez-Carreras2001}, \cite{Marinelli2013} and the references therein). 
Moreover, there is a paper dealing with  the Cahn-Hilliard equation
(see \cite{Cardon-Weber2001})
and some papers dealing with the one dimensional Burgers equation (see \cite{Morien1999,NualartZaidi1997}).
Our aim is to deal with  stochastic fluid dynamical equation in
dimension bigger than one. As we shall see in Section \ref{exist_uniq_cont}, our equation can be written as a stochastic 
parabolic nonlinear equation in a two dimensional spatial domain with a nonlinear term which is of a form different from that 
studied in other papers about stochastic 
parabolic nonlinear equations in spatial dimension bigger than 1 (see \cite{Marquez-Carreras2001}, \cite{Marinelli2013}).

Therefore, we consider the 
two dimensional stochastic Navier-Stokes equations 
\begin{equation}
\label{NS}
\begin{cases}
\dfrac{\partial v}{\partial t}(t,x)+ (v (t,x)\cdot \nabla)  v(t,x)- \nu \Delta v(t,x)+ \nabla p(t,x)= n(t,x)& (t,x) \in \left[0,T\right] \times D
\\
\nabla \cdot  v(t,x)=0 & (t,x) \in \left[0,T\right] \times D
\\
v(0,x)= v_0(x) & x \in D.
\end{cases}
\end{equation}
describing the motion  of a viscous incompressible fluid in a domain
$D\subset \mathbb R^2$. The  unknowns are the velocity vector  $v$ and
the pressure $p$, whereas the data are the viscosity  $\nu>0$, the initial
velocity $v_0$ and the  stochastic forcing term $n$. Suitable boundary
conditions are associated to system \eqref{NS}; here we choose to work
on the torus, so $D=\left[0, 2 \pi\right]^2$ and  periodic boundary
conditions are assumed.

Taking formally the \textit{curl} of both sides of the first equation
in \eqref{NS} we get the vorticity formulation, where the unknown is
the vorticity $\xi=\nabla^\perp \cdot  v\equiv 
\partial_{x_1} v_2 -\partial_{x_2} v_1$. Indeed, 
 the \textit{curl} of a planar vector filed is a
vector orthogonal to the plane, hence with only one significant
component $\xi$. Therefore, for regular enough solutions, 
system \eqref{NS} is
equivalent to
\begin{equation}
\label{vort_0}
\begin{cases}
\displaystyle  \frac{\partial\xi}{\partial t}(t,x)- \nu \Delta \xi(t,x)+v(t,x) \cdot \nabla \xi(t,x)
       = w({\rm d}x,{\rm d}t)&\quad (t,x) \in [0,T] \times D
\\ 
\nabla \cdot  v(t,x)=0 &\quad (t,x) \in [0,T] \times D
\\ 
\xi(t,x) = \nabla^{\perp} \cdot v(t,x)&\quad (t,x) \in [0,T] \times D
\\
\displaystyle \xi(0,x)=\xi_0(x) &\quad x \in D
\end{cases}
\end{equation}
with periodic boundary conditions.  For simplicity we put $\nu=1$ from
now on.
The random force $w$ acting on the system is formally equal to the \textit{curl} of $n$ 
appearing in \eqref{NS}; $w({\rm d}x, {\rm d}t)$ is the formal notation for some Gaussian 
perturbation defined on some probability space (for the details see Subsection \ref{hyp_res}).
We shall see that system  \eqref{vort_0} can be rewritten as  a closed
equation for the vorticity, since 
$v$ can be explicitly expressed in terms of $\xi$ by means of the
Biot-Savart law $v=k\ast\xi$
(see Subsection \ref{biot_savart_section}).

We interpret Eq. \eqref{vort_0} in the sense of Walsh (see \cite{Walsh1986}). Let $g(t,x,y)$ be the 
fundamental solution to the heat equation on the flat torus (see
Subsection \ref{sec3}).
We shall see that a random field $\xi=\{\xi(t,x), t \in \left[0,T\right] \times D\}$ 
is a solution to equation \eqref{vort_0} if it satisfies the evolution equation
\begin{multline}
\label{Walsh_0}
\xi(t,x)=\int_D g(t,x,y)\xi_0(y)\, {\rm d}y 
+ \int_0^t \int_D \nabla_yg(t-s, x,y) \cdot  v(s,y) \xi(s,y) \, {\rm d}y \,{\rm d}s
\\
+
\int_0^t \int_D g(t-s,x,y)\,  w({\rm d}y, {\rm d}s)\qquad\qquad
\end{multline}
with $v=k\ast\xi$. 
The stochastic integral
will be explicitly defined in Subsection \ref{hyp_res}.
Notice that in the present work  we consider an additive noise, 
that is the stochastic forcing term is independent of 
the unknown process $\xi$. This particular choice is made only in
order to highlight the novelties of the results when compared to the one dimensional Burgers equation.
Nevertheless, with standard techniques it is possible to extend the
results to the multiplicative case and this shall be the object of a subsequent paper.

In the first part of the paper we shall prove the existence and uniqueness
of the solution to problem \eqref{vort_0}. We follow an approach similar to
\cite{GyongyNualart1999} for the one dimensional stochastic Burgers equation and to
\cite{Cardon-Weber2001} for the Cahn-Hilliard stochastic equation. 
The regularization property of the heat kernel as stated in Lemma \ref{regularization} 
plays a key role in our method. Since the non linear term that appears
in \eqref{Walsh_0} is non Lipschitz, we adopt a method of localization: 
by means of a contraction principle, we prove at first the result for the smoothed 
equation with truncated coefficient. This kind of result provides the uniqueness 
for the solution to \eqref{vort_0} and its local existence, namely the existence 
on the time interval $\left[0,\tau\right]$ where $\tau$ is a stopping time. 
To prove the global existence we show that $\tau=T$ $\mathbb{P}$-a.s.  
We then study the regularity of $\xi$ proving that if $\xi_0$ is a
continuous 
function on $D$, then the solution admits a modification which is a space-time continuous process.

In the second part of the paper we study the regularity of the solution in 
the sense of stochastic calculus of variations, namely we prove the existence of 
the density of the random variable $\xi(t,x)$, for fixed 
$(t, x) \in \left[0,T\right] \times D$. For this we use the Malliavin 
calculus (see \cite{Nualart2006}) associated to the noise that appears 
in \eqref{vort_0}. We prove at first that for any fixed $(t,x) \in
\left[0,T\right] \times D$ the random variable $\xi(t,x)$ belongs to the Sobolev 
space $\mathbb{D}_{loc}^{1,p}$ for every $p>4$.
Then we prove that the law of  $\xi(t,x)$ is absolutely continuous with 
respect to the Lebesgue measure on $\mathbb{R}$. We point out here that the 
localization argument we use in order to achieve this result does 
not provide the smoothness of the density since we do not have the boundedness 
of the derivatives of every order. Moreover, let us notice that 
the technique of  analysis of the existence of the density  by means of Malliavin
calculus  is suited for a scalar unknown; the case for a vector unknown is
much more involved (see, e.g., \cite{Nualart2006}).
This is the reason why we  work on the Navier-Stokes equations in vorticity
form \eqref{vort_0} instead of the usual formulation \eqref{NS} with respect to the vector
velocity.

The main results of the paper are the following.
\begin{theorem}
\label{existence_thm}
Let $b>0$ in \eqref{delta} and $p>2$. 
If $\xi_0 \in L^p(D)$, 
then there exists a unique $\mathcal{F}_t$-adapted solution to 
equation \eqref{Walsh_0} which is continuous with values in $L^p(D)$.
Moreover, if $\xi_0 \in C(D)$ the solution admits a modification which is a space-time continuous process.
\end{theorem}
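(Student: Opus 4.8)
The plan is to establish existence and uniqueness by first treating a regularized, truncated equation via a Banach fixed-point argument, and then removing the regularization and truncation using a priori estimates. More precisely, for $R>0$ let $\psi_R:[0,\infty)\to[0,1]$ be a smooth cut-off with $\psi_R(r)=1$ for $r\le R$ and $\psi_R(r)=0$ for $r\ge R+1$, and consider the truncated mild equation
\begin{multline*}
\xi(t,x)=\int_D g(t,x,y)\xi_0(y)\,{\rm d}y
+\int_0^t\!\!\int_D \nabla_y g(t-s,x,y)\cdot v(s,y)\,\xi(s,y)\,\psi_R\bigl(\|\xi(s)\|_{L^p(D)}\bigr)\,{\rm d}y\,{\rm d}s\\
+\int_0^t\!\!\int_D g(t-s,x,y)\,w({\rm d}y,{\rm d}s),
\end{multline*}
with $v=k\ast\xi$. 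On the Banach space $\cX_T$ of $\cF_t$-adapted processes that are continuous with values in $L^p(D)$, equipped with a norm of the form $\sup_{t\le T}e^{-\lambda t}(\bE\|\xi(t)\|_{L^p}^q)^{1/q}$, I would show the map defined by the right-hand side is a contraction for $\lambda$ large. The stochastic convolution term is independent of $\xi$ and can be handled once and for all: using the factorization method together with the spatial covariance hypothesis (the parameter $b>0$ in \eqref{delta}) one checks it has a version in $\cX_T$, indeed with moments of all orders. For the deterministic bilinear term the essential input is the smoothing Lemma \ref{regularization}, which controls $\|\nabla g(t-s)\ast f\|_{L^p}$ by a factor $(t-s)^{-\alpha}$ with $\alpha<1$, together with the Biot-Savart bound $\|v\|_{L^\infty}\lesssim\|\xi\|_{L^p}$ for $p>2$ (so $v=k\ast\xi$ maps $L^p$ into $L^\infty$), and H\"older's inequality to estimate $\|v\xi\|_{L^p}\le\|v\|_{L^\infty}\|\xi\|_{L^p}$. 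The cut-off makes this Lipschitz in $\xi$, so the time integral contributes a factor that is small after multiplication by $e^{-\lambda t}$, yielding the contraction and hence a unique fixed point $\xi^R\in\cX_T$.

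Next I would patch the truncated solutions together. Define $\tau_R=\inf\{t:\|\xi^R(t)\|_{L^p(D)}\ge R\}\wedge T$; by uniqueness the processes $\xi^R$ agree on $[0,\tau_R]$ for different $R$, so they define a single process $\xi$ on the stochastic interval $[0,\tau]$ with $\tau=\lim_R\tau_R$, which is the maximal local solution of \eqref{Walsh_0} and is automatically unique. The crux of the argument is global existence, i.e. showing $\tau=T$ a.s., equivalently $\sup_{t\le T}\|\xi(t)\|_{L^p(D)}<\infty$ a.s. For this I would derive an a priori bound on $\bE\sup_{t\le\tau_R\wedge T}\|\xi(t)\|_{L^p}^q$ uniform in $R$. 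Writing $\xi=\xi^{\rm lin}+\xi^{\rm stoch}+\xi^{\rm nonlin}$ and using the $L^p$-contractivity of the heat semigroup for the linear part, the already-established moment bounds for $\xi^{\rm stoch}$, and for the nonlinear part the estimate
$$\Bigl\|\int_0^t\nabla g(t-s)\ast\bigl(v(s)\xi(s)\bigr)\,{\rm d}s\Bigr\|_{L^p}\le C\int_0^t (t-s)^{-\alpha}\|\xi(s)\|_{L^p}^2\,{\rm d}s,$$
one is tempted to close a Gronwall-type inequality; but the quadratic nonlinearity obstructs a naive Gronwall argument. The key point — exactly as in \cite{GyongyNualart1999} for Burgers — is that the true nonlinearity is not $v\xi$ in general position: using the divergence-free structure $\nabla\cdot v=0$ one rewrites $v\cdot\nabla\xi=\nabla\cdot(v\xi)$, and more importantly one exploits a cancellation/conservation property (the $L^1$ norm of vorticity, or an entropy-type functional, is non-increasing for the deterministic flow) to tame the quadratic growth. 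Concretely I expect to bound $\|\xi(t)\|_{L^p}$ in terms of $\|\xi(s)\|_{L^q}$ for a smaller $q$, bootstrapping down to an $L^1$ or $L^2$ estimate that closes because of this structural cancellation; this is the main obstacle and the place where the specific form of the Navier-Stokes nonlinearity (as opposed to a generic quadratic term) is indispensable.

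Finally, for the space-time continuity statement when $\xi_0\in C(D)$, I would use the Kolmogorov continuity criterion applied to the three terms of \eqref{Walsh_0} separately, now that $\sup_{t\le T}\|\xi(t)\|_{L^p}<\infty$ is known with moments. The linear term $\int_D g(t,x,y)\xi_0(y)\,{\rm d}y$ is the heat evolution of a continuous function, hence continuous on $[0,T]\times D$ classically. For the nonlinear term I would estimate increments $\bE|\xi^{\rm nonlin}(t,x)-\xi^{\rm nonlin}(t',x')|^q$ using the H\"older regularity in space and time of the heat kernel and its gradient (again via Lemma \ref{regularization} and standard bounds on $\|\nabla g(t,\cdot,x)-\nabla g(t,\cdot,x')\|$, $\|\nabla g(t,\cdot,x)-\nabla g(t',\cdot,x)\|$), together with the already-established bound on $\|v(s)\xi(s)\|_{L^p}\lesssim\|\xi(s)\|_{L^p}^2$. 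The stochastic convolution is handled by a Burkholder/Da Prato-Kwapień-Zabczyk type estimate: $\bE|\xi^{\rm stoch}(t,x)-\xi^{\rm stoch}(t',x')|^q\le C(|t-t'|^{\gamma}+|x-x'|^{\gamma})^{q}$ for some $\gamma>0$, which follows from the spatial covariance assumption $b>0$ after computing the relevant space-time covariance of the Gaussian field and using hypercontractivity to pass from second moments to $q$-th moments. Choosing $q$ large enough that $\gamma q>3$ (dimension of $[0,T]\times D$ is $3$) gives, via Kolmogorov, a space-time continuous modification. I anticipate the routine-but-lengthy part here is the careful Hölder bookkeeping for the heat kernel increments; the conceptual content is entirely in the global existence step.
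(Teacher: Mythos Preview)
Your overall architecture---truncate, fixed point, patch via stopping times, a priori bound to show $\tau=T$ a.s., then continuity---matches the paper. Two points of departure are worth flagging, one minor and one substantive.

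\textbf{Fixed point (minor).} You run the contraction in a moment norm $\sup_t e^{-\lambda t}(\bE\|\xi(t)\|_{L^p}^q)^{1/q}$. The paper exploits that the noise is additive and works \emph{pathwise} in the space of $L^p$-valued continuous adapted processes with norm $\sup_t\|\eta(t)\|_{L^p}$. This is simpler (no moments to track) and is the natural choice here; your version would also work but carries unnecessary baggage.

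\textbf{Global existence (the real gap).} You correctly identify this as the crux and correctly point to the divergence-free structure, but your proposed mechanism---bootstrapping $L^p$ down to an $L^1$ or $L^2$ bound via some unspecified conservation law---is vague and is not what the paper does. The paper's device is concrete: set $\beta_N=\xi_N-z$ (subtract the stochastic convolution), so $\beta_N$ solves the deterministic equation $\partial_t\beta_N=\Delta\beta_N-\nabla\cdot q_N(\beta_N+z)$ pathwise. One then computes $\frac{d}{dt}\|\beta_N(t)\|_{L^p}^p$ directly. The quadratic-in-$\beta_N$ term vanishes exactly because of the identity $\langle |\beta_N|^{p-2}\beta_N,\, v\cdot\nabla\beta_N\rangle=0$ for divergence-free $v$; what remains are cross-terms involving $z$, which after Young's inequality give
\[
\frac{d}{dt}\|\beta_N(t)\|_{L^p}^p \le C_p\bigl(1+\|z(t)\|_{L^p}^2\bigr)\|\beta_N(t)\|_{L^p}^p + C_p\|z(t)\|_{L^p}^{2p}.
\]
Gronwall then yields a \emph{pathwise} bound $\|\beta_N(t)\|_{L^p}^p\le\bigl[\|\xi_0\|_{L^p}^p+C_1(z)\bigr]e^{C_2(z)}$, uniform in $N$. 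Note this bound is exponential in $\sup_t\|z(t)\|_{L^p}^2$, so it does not directly give uniform control of $\bE\sup_t\|\xi_N(t)\|_{L^p}^q$ as you propose; instead the paper takes $\log$, bounds $\bE\sup_t\log\|\beta_N(t)\|_{L^p}$, and uses Chebyshev on the logarithm to show $\bP(\sigma_N<T)\to 0$. Your moment-based plan would likely stall at exactly this point.

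\textbf{Space-time continuity (different route, both valid).} Your Kolmogorov-criterion program with increment estimates on the kernel would work but is laborious. The paper is more economical: once $\xi_0\in C(D)\subset L^{\tilde p}$ for all $\tilde p$, one has $q(\xi)\in C([0,T];L_{\tilde p})$ for $\tilde p>4$, and Lemma~\ref{regularization}\textit{(ii)} (proved via factorization) gives directly that $J q(\xi)\in C([0,T]\times D)$. The stochastic convolution is continuous by Lemma~\ref{regol_conv}, again via factorization, and the linear term is classical. No kernel increment bookkeeping is needed.
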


\begin{theorem}
\label{density_thm}
Let $b>1$ in \eqref{delta}. If $\xi_0 \in C(D)$, 
then for every $t\in \left[0,T\right]$ and $x \in D$ the image law 
of the random variable $\xi(t,x)$ is absolutely continuous w.r.t. to the Lebesgue measure on $\mathbb{R}$.
\end{theorem}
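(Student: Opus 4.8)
The plan is to apply the classical Bouleau--Hirsch criterion: if a random variable $F$ belongs to $\mathbb{D}_{loc}^{1,1}$ and its Malliavin derivative satisfies $\|DF\|_{\mathcal H}>0$ almost surely, where $\mathcal H=L^2(0,T;\mathcal H_0)$ is the Hilbert space on which the Malliavin calculus of the noise $w$ is built ($\mathcal H_0$ encoding the spatial correlation), then the law of $F$ is absolutely continuous with respect to the Lebesgue measure on $\mathbb R$ (see \cite{Nualart2006}). Since the previous section gives $\xi(t,x)\in\mathbb{D}_{loc}^{1,p}$ for every $p>4$, hence in particular $\xi(t,x)\in\mathbb{D}_{loc}^{1,1}$, it remains only to prove $\|D\xi(t,x)\|_{\mathcal H}>0$ $\mathbb{P}$-a.s. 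The solution $\xi$ being built by localization, it coincides on a set $\Omega_n$ of an increasing family with $\mathbb{P}\bigl(\bigcup_n\Omega_n\bigr)=1$ with the solution $\xi_n$ of the truncated equation, whose coefficients are bounded on $\Omega_n$; so it suffices to show $\|D\xi(t,x)\|_{\mathcal H}>0$ a.s.\ on each $\Omega_n$ and then let $n\to\infty$.

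First I would write the linear equation satisfied by the Malliavin derivative. As the noise in \eqref{Walsh_0} is additive the stochastic integral is differentiated trivially, while the nonlinear term is handled by the chain rule together with $v=k\ast\xi$; one obtains, for $0\le r<t$ and $z\in D$,
\begin{multline*}
D_{r,z}\xi(t,x)=g(t-r,x,z)\\
+\int_r^t\!\!\int_D \nabla_y g(t-s,x,y)\cdot\Bigl[\bigl(k\ast D_{r,z}\xi(s,\cdot)\bigr)(y)\,\xi(s,y)+v(s,y)\,D_{r,z}\xi(s,y)\Bigr]\,{\rm d}y\,{\rm d}s ,
\end{multline*}
which we read as a linear integral equation for $(t,x)\mapsto D_{r,z}\xi(t,x)$ with forcing term $g(t-r,x,z)$, the heat kernel at the small time $t-r$. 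Its solvability and the bound $\|D\xi(t,x)\|_{\mathcal H}^2=\int_0^t\|D_{r,\cdot}\xi(t,x)\|_{\mathcal H_0}^2\,{\rm d}r<\infty$ a.s.\ follow, on the sets $\Omega_n$, from the heat-kernel regularization estimates of Lemma \ref{regularization} and the mapping properties of $k$ already used in the proof of Theorem \ref{existence_thm}.

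The heart of the argument is the lower bound. Heuristically, as $r\uparrow t$ the integral above runs over the shrinking interval $[r,t]$ and becomes negligible compared with the forcing term $g(t-r,x,\cdot)$, which is a strictly positive function; hence $D_{r,\cdot}\xi(t,x)$ stays close in $\mathcal H_0$ to $g(t-r,x,\cdot)$ for $r$ near $t$. To make this rigorous on $\Omega_n$ I would fix a small $\varepsilon>0$ and estimate $D_{r,\cdot}\xi(t,x)-g(t-r,x,\cdot)$ in $\mathcal H_0$, using Lemma \ref{regularization}, the boundedness of the truncated coefficients and of $v,\xi$ on $\Omega_n$, the mapping properties of the Biot--Savart kernel, and a Gronwall argument, so as to reach on $\Omega_n$ a deterministic bound $\|D_{r,\cdot}\xi(t,x)-g(t-r,x,\cdot)\|_{\mathcal H_0}\le C_n\,(t-r)^{\theta}$ for some $\theta>0$, whereas $\|g(t-r,x,\cdot)\|_{\mathcal H_0}$ stays bounded and bounded away from $0$ as $r\uparrow t$ --- and it is here that the hypothesis $b>1$ in \eqref{delta} (the regime in which $\xi(t,x)\in\mathbb{D}_{loc}^{1,p}$ was obtained) enters. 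Choosing $\varepsilon$ small enough that $C_n\,(t-r)^{\theta}\le\tfrac12\|g(t-r,x,\cdot)\|_{\mathcal H_0}$ for $r\in[t-\varepsilon,t]$, we get on $\Omega_n$
\begin{multline*}
\|D\xi(t,x)\|_{\mathcal H}^2\ge\int_{t-\varepsilon}^{t}\|D_{r,\cdot}\xi(t,x)\|_{\mathcal H_0}^2\,{\rm d}r\\
\ge\tfrac14\int_{t-\varepsilon}^{t}\|g(t-r,x,\cdot)\|_{\mathcal H_0}^2\,{\rm d}r=\tfrac14\int_0^{\varepsilon}\|g(\sigma,x,\cdot)\|_{\mathcal H_0}^2\,{\rm d}\sigma>0 ,
\end{multline*}
and letting $n\to\infty$ gives $\|D\xi(t,x)\|_{\mathcal H}>0$ $\mathbb{P}$-a.s., which by Bouleau--Hirsch proves the theorem.

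The step I expect to be the main obstacle is exactly the quantitative control of $D_{r,\cdot}\xi(t,x)-g(t-r,x,\cdot)$ near $r=t$: the nonlinear term in \eqref{Walsh_0} is non-Lipschitz and involves the singular Biot--Savart kernel, so the correction must be estimated in the comparatively strong norm of $\mathcal H_0$ while absorbing the singularity of $\nabla_y g(t-s,x,y)$ at $s=t$, exploiting the extra integrability $\xi(t,x)\in\mathbb{D}_{loc}^{1,p}$ with $p>4$, and keeping all constants deterministic on the localizing sets $\Omega_n$ so that the Gronwall estimate closes; this is also the point where $b>1$, rather than merely $b>0$, is needed.
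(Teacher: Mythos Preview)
Your overall strategy matches the paper's: reduce to Bouleau--Hirsch (Theorem~\ref{theo:dens}), localize via $(\Omega_n,\xi_n)$, and show nondegeneracy by comparing $D_{r,\cdot}\xi(t,x)$ with the forcing term $g(t-r,x,\cdot)$ for $r$ close to $t$. Where you diverge is in the nondegeneracy step itself. You propose a \emph{pathwise} bound on $\Omega_n$: control $\|D_{r,\cdot}\xi(t,x)-g(t-r,x,\cdot)\|_{\mathcal H_0}$ by $C_n(t-r)^{\theta}$ deterministically, then use that $\|g(t-r,x,\cdot)\|_{\mathcal H_0}$ stays bounded away from zero. The paper instead proves nondegeneracy \emph{globally} for the truncated solution $\xi_N$ via a probabilistic argument: Lemma~\ref{D_eps} establishes $\sup_{\sigma,x}\mathbb E\|D\xi_N(\sigma,x)\|^p_{\mathcal H_{(t-\varepsilon,t)}}\le C_N\varepsilon^{p/2}$, and Theorem~\ref{dens} combines this with the lower bound $\int_{t-\varepsilon}^t\|g(t-r,x,\cdot)\|^2_{L^2_Q}\,{\rm d}r\ge C_T\varepsilon$ and Chebyshev's inequality to obtain $\mathbb P(\|D\xi_N(t,x)\|^2_{\mathcal H_T}<\delta)\le C_N\delta^{p/2-2}\to 0$ for $p>4$. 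Your pathwise route is viable --- the truncated coefficients are globally bounded, so a singular-kernel Gronwall closes --- but to make it rigorous you must run the Gronwall in an integrated norm such as $L^p(D;\mathcal H_0)$ (not pointwise in $y$) in order to absorb the $\|\nabla_y g(t-s,x,\cdot)\|_{L^{p/(p-1)}}$ singularity, and only afterwards return to the pointwise estimate at $(t,x)$. The paper's expectation approach is more economical because those $L^p(\Omega)$-bounds are exactly what the Picard scheme in Theorem~\ref{D12} already produced.

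Two small corrections. First, on $\Omega_n$ you have $D\xi=D\xi_n$, and the equation for $D\xi_n$ (see \eqref{Malliavin_derivative}) carries an extra term coming from $\Theta_N'(\|\xi_N\|_{L^p})$; it vanishes on $\Omega_n$ because $\|\xi_n\|_{L^p}\le n$ there, so your simpler equation is indeed correct on $\Omega_n$, but this should be said explicitly. Second, your parenthetical misattributes the role of $b>1$: membership $\xi(t,x)\in\mathbb D^{1,p}_{\rm loc}$ already holds for $b>0$ (Theorem~\ref{D12}). The condition $b>1$ enters exactly where you place it in the argument --- it is what makes $\mathrm{Tr}\,Q=\sum_k|k|^{-2b}$ finite, hence $\|g(s,x,\cdot)\|_{L^2_Q}^2\to(2\pi)^{-2}\mathrm{Tr}\,Q$ as $s\downarrow 0$ (compare \eqref{60}).
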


The paper is organized as follows: in Section \ref{mathematical_setting} we define the functional spaces, we state the results concerning the needed estimates of heat kernel on the flat torus, we present the Biot-Savart law that exploit the relation between the velocity and the vorticity and we state the hypothesis concerning the random forcing term. In Section \ref{prliminary_lemmas} we present some technical lemmas. In Section \ref{exist_uniq_cont} we establish the existence and uniqueness of the solution to \eqref{vort_0} as well as its $\mathbb{P}$-a.s. space-time continuity. In Section \ref{Malliavin_section} we prove the absolute continuity of the solution $\xi(t,x)$, for $t \in \left[0,T\right]$, $x \in D$. Finally, the estimates of the heat kernel and its gradient are proved in \ref{Appendix}.

\smallskip

{\small Notation. In the sequel, we shall indicate with $C$ a constant that may varies from line to line. In certain cases, we write $C_{\alpha,\beta,\dots}$ to 
emphasize the dependence of the constant on the parameters $\alpha, \beta, \dots$.}

\section{Mathematical Setting }
\label{mathematical_setting}

\subsection{Spaces and operators.}
We denote by $ x=(x_1,x_2)$ a generic point of $\mathbb{R}^2$ and by
$$x \cdot y= x_1y_1+x_2y_2 \qquad \text{and} \qquad |x|=\sqrt{x \cdot x},\qquad  x,  y \in \mathbb{R}^2$$
the scalar product and the norm in $\mathbb{R}^2$.
Given $z=\mathcal{R}z+i \mathcal{I}z \in \mathbb{C}$ we denote by $|z|$ 
its absolute value and by $\bar z$ its complex coniugate: 
$|z|=\sqrt{(\mathcal{R}z)^2+(\mathcal{I}z)^2}$, $\bar z=\mathcal{R}z-i \mathcal{I}z$.
We define $\mathbb{Z}_+^2=\{k=(k_1,k_2)\in \mathbb{Z}^2:k_1 >0\} \cup \{
k=(0,k_2)\in \mathbb{Z}^2:k_2>0\}$ and $\mathbb{Z}_0^2=\mathbb{Z}^2 \setminus \{ 0\}$.

Let  $D=\left[0,2 \pi\right]^2$, we consider the space $L^2_{\sharp}(D)$ of all complex-valued 
$2 \pi$-periodic functions in $x_1$ and $x_2$ which are measurable and square integrable on $D$, endowed with the scalar product 
\begin{equation*}
\langle f,g \rangle_{L^2(D)}=\int_D f(x) \overline{g(x)}\, {\rm d}x
\end{equation*}
and the norm $\| \cdot\|_{L^2(D)}=\sqrt{\langle \cdot, \cdot\rangle_{L^2(D)}}$.
We also consider the space $\left[L^2_{\sharp}(D)\right]^2$ consisting of all pairs $u=(u_1, u_2)$ of complex-valued periodic functions 
endowed with the inner product
\begin{multline*}
\langle u, v \rangle_{\left[L^2(D)\right]^2}:=\int_D  u(x) \cdot  \overline{v(x)}\, {\rm d}x 
\\
= \int_D \left[u_1(x)\overline{v_1(x)}+u_2(x)\overline{v_2(x)}\right]\, {\rm d}x, \qquad  u, v \in
\left[L^2_{\sharp}(D)\right]^2.
 \end{multline*}
 An orthonormal basis for the space $L^2_{\sharp}(D)$ is given by $\{e_k\}_{k \in \mathbb{Z}^2}$, where
\begin{equation}
\label{base}
e_k(x)=\frac{1}{2\pi}e^{ i k \cdot x}, \qquad x \in D, \ k \in \mathbb{Z}^2.
\end{equation}
As usual in the periodic case, we deal with mean value zero  vectors.
This gives a simplification in the mathematical treatment but  does not prevent to consider 
non zero mean value vectors: this can be dealt in a similar way (see \cite {Temam1995}). 
We use the notation $\dot L^2_{\sharp}(D)$ to keep tracks of the zero-mean condition.
An orthonormal system for the space 
$\dot L^2_{\sharp}(D)$, formed by eigenfunctions of the operator $-\Delta$ with associated eigenvalues $\lambda_k=|k|^2$, is given by $\{e_k\}_{k \in \mathbb{Z}^2_0}$ with $e_k$ as in \eqref{base}.
The real-valued functions in $\dot L^2_{\sharp}(D)$ can be characterized by their Fourier series expansion as follows
\begin{equation*}
\dot L^2_{\sharp}(D)=
\{f(x)= \sum_{k\in \mathbb{Z}^2_0} f_k e_k(x): \bar f_k=f_{-k}\text{ for any } k,
\sum_{k\in \mathbb{Z}^2_0}|f_k|^2 < \infty\}.
\end{equation*}
For every $p>2$, with $\dot L_{\sharp}^p(D)$ we denote the subspaces of $L^p(D)$ 
consisting of zero mean and periodic scalar functions. These are Banach spaces with norms 
inherited from $L^p(D)$.

Let $A$ denote the Laplacian operator $-\Delta$ with periodic boundary conditions. 
For every $b \in \mathbb{R}$, we define the powers of the operator $A$ as follows: 
\begin{equation*}
\text{if } 
f= \sum_{k \in \mathbb{Z}^2_0}f_k e_k \qquad 
\text{then }
A^b f= \sum_{k \in \mathbb{Z}^2_0} |k|^{2b} f_k e_k
\end{equation*}
and 
\[
D(A^b)=\{f= \sum_{k \in \mathbb{Z}^2_0}f_k e_k: 
\sum_{k \in \mathbb{Z}^2_0} |k|^{4b}|f_k|^2<\infty\}.
\]

For any $b\in \mathbb{R}_+$ and $p\ge 1$ 
we set  
\[
W^{b,p}(D)=\{ f \in \dot L_{\sharp}^p(D): A^{\frac b2} f \in
\dot L_{\sharp}^p(D)\}.
\]
These are Banach spaces with the usual norm; when $p=2$ they become
Hilbert spaces and we denote them by $W^b$.
For $b<0$ we define $W^b$ as the dual space of $W^{-b}$ with respect to
the $L^2$-scalar product.

Similarly, we proceed to define the space regularity of vector fields which are periodic, zero mean value and divergence free. We have the corresponding action of the Laplace operator on each component of the vector. 
Therefore we define the space 
\[
H=\{ v \in [\dot L^2_{\sharp}(D)]^2: \nabla\cdot v=0\}
\]
where the divergence free condition has to be understood in the distributional sense. 
This is an Hilbert space with the scalar product inherited from $\left[L^2(D)\right]^2$.
We denote the norm in this space by $|\cdot|_H$, $| u|^2_H:= \langle u,  u \rangle_H$.
A basis for the space 
$H$ is $\{\frac{k^{\perp}}{|k|} e_k\}_{k \in \mathbb{Z}^2_0}$, 
where $k^{\perp}=(-k_2,k_1)$ and $e_k$ is given in \eqref{base}.
For $p>2$ let us set $L_p(D):= H \cap \left[L^p(D)\right]^2$. These
are Banach spaces with norms inherited from $\left[L^p(D)\right]^2$. 
Similarly, for vector spaces we set 
\[
H^b_p(D)= \{v \in L_p(D): A^{\frac b2} v \in L_p(D)\}.
\]
These are Banach spaces with the usual norm; when $p=2$ they become Hilbert spaces and we denote them by $H^b$.
For $b<0$ we define $H^b$ as the dual space of $H^{-b}$ with respect to the $H$-scalar product.

The Poincar\'e inequality holds; moreover, the zero mean value
assumption provides that $\|v\|_{H^b_p(D)}$ is equivalent to 
$\left(\| v\|^p_{L_p(D)}+\| v\|^p_{H^b_p(D)}\right)^{\frac 1p}$.

In the sequel we shall use the Sobolev embedding Theorem (see for instance \cite[Theorem 9.16]{brezis2010functional}):
\begin{itemize}
\item for every $2<p< \infty$ the space $H^1_p(D)$ is compactly embedded in $L_{\infty}(D)$, namely there exists a constant $C$ (depending on $p$ such that):
\begin{align}
\label{Sobolev}
\| v\|_{L_{\infty}(D)} 
\le  C\| v\|_{H^1_p(D)},
\end{align}
\item the space $W^a(D)$
is compactly embedded in $L^{\infty}(D)$ for $a>1$.
\end{itemize}
\smallskip

{\small Notation. In the sequel, spaces over the domain $D$ will be denoted without explicitly mentioning the domain, e.g. $L^p$ stands for $L^p(D)$. By an innocuous abuse of notation, the scalar product $\langle \cdot,\cdot \rangle_{\left[L^2\right]^2}$ will be denoted by $\langle \cdot,\cdot \rangle_{L^2}$ and the norm $\|\cdot\|_{\left[L^p\right]^2}$ by $\|\cdot\|_{L^p}$.

Given two normed vector spaces $(U, \|\cdot\|_U)$ and $(V, \|\cdot\|_V)$, by $\mathcal{L}(U,V)$ we denote the space of all linear bounded operators from $U$ into $V$. We write $\langle \cdot, \cdot \rangle$ for the scalar product $\langle \cdot, \cdot \rangle_{U^{\prime} \times U}$ in the duality $U^{\prime}$, $U$.}

\subsection{The Heat Kernel}
\label{sec3}
We deal with the heat kernel $g$ appearing in equation \eqref{Walsh_0}:
we need suitable estimates on $g$ since its 
regularizing effect (see Lemma \ref{regularization}) will play a key role. 

The operator $-A$ generates a semigroup $S(t)=e^{-tA}$: 
for $\xi \in \dot L^2_{\sharp}$  and $t \in \left[0,T\right]$ we have
\begin{equation}
\label{semigroup}
\left[S(t)\xi\right](x)
=\sum_{k \in \mathbb{Z}^2} e^{-|k|^2t} \langle \xi, e_k \rangle_{L^2} e_k(x)
=\frac{1}{2 \pi}\sum_{k \in \mathbb{Z}^2} \langle \xi, e_k \rangle_{L^2} e^{-t|k|^2+ik \cdot x}.
\end{equation}
Moreover, the action of the semigroup on the function $\xi$ can be expressed as the convolution 
\begin{equation}
\label{kernel}
\left[S(t)\xi\right](x)
= \int_Dg(t, x, y) \xi(y) \, {\rm d}y
\end{equation}
where $g$ is the fundamental solution (or heat kernel) to the problem 
\begin{equation}
\label{Heat1}
\begin{cases}
\frac{\partial}{\partial t}u(t,x)- \Delta  u(t,x)=0, & (t,x) \in \left(0,T\right] \times D
\\
u(t, \cdot) \ \text{is periodic}, &  t \in \left[0,T\right]
\\
u(0,x)=\delta_0(x-y), & x,y \in D.
\end{cases}
\end{equation} 
By means of Fourier series expansion we recover 
\begin{equation}
\label{kernel_fourier}
g(t,x,y) 
=\frac{1}{(2 \pi)^2} \sum_{k \in \mathbb{Z}^2} e^{-t|k|^2+ik\cdot (x-y)}.
\end{equation}
We shall need another expression of the kernel obtained by means of
the method of images 
(for more details see for instance \cite[Chapters 2.7$\S$5 and 2.11$\S$3]{dym1972fourier} and \cite[Chapter 7$\S$2]{steinintroduction}):
\begin{equation}
\label{images}
g(t,x,y)
=\frac{1}{4\pi t} \sum_{k \in \mathbb{Z}^2}e^{-\frac{|x-y+2 k \pi|^2}{4t}}.
\end{equation}

It is easy, using \eqref{kernel_fourier} or \eqref{images},  
to check the following properties
\begin{proposition} \label{regol_g}
For any  $x,y \in D$ and $t>0$ we have
\begin{itemize}
\item Symmetry: $g(t,x,y)=g(t,y,x)$,
\item $g(t,x,y)=g(t,0,x-y)$.
\end{itemize}
\end{proposition}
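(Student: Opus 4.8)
The plan is to read off both identities directly from the explicit series representations \eqref{kernel_fourier} and \eqref{images}, exploiting the fact that the summation over $\mathbb{Z}^2$ is invariant under the sign change $k\mapsto -k$. Since $t>0$, each of these series converges absolutely, thanks to the Gaussian factors $e^{-t|k|^2}$ in \eqref{kernel_fourier} and $e^{-|x-y+2k\pi|^2/(4t)}$ in \eqref{images}; hence any reindexing of the sums is legitimate and no analytic subtlety arises.

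For the translation property $g(t,x,y)=g(t,0,x-y)$ I would simply note that, in the representation \eqref{images}, $g(t,x,y)$ depends on the two space variables only through the difference $x-y$, so replacing $(x,y)$ by $(0,x-y)$ produces exactly the same sum. (Equivalently, from \eqref{kernel_fourier} one has $g(t,0,x-y)=\frac{1}{(2\pi)^2}\sum_{k\in\mathbb{Z}^2}e^{-t|k|^2-ik\cdot(x-y)}$, which coincides with $g(t,x,y)$ after the substitution $k\mapsto -k$.) For the symmetry $g(t,x,y)=g(t,y,x)$ I would use \eqref{kernel_fourier}: in $g(t,y,x)$ the exponent is $-t|k|^2+ik\cdot(y-x)$, and the substitution $k\mapsto -k$ leaves $|k|^2$ unchanged while turning $ik\cdot(y-x)$ into $ik\cdot(x-y)$, recovering $g(t,x,y)$; the same conclusion also follows from \eqref{images} using $|y-x+2k\pi|^2=|x-y-2k\pi|^2$ together with $k\mapsto-k$. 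One could even deduce the symmetry from the translation property alone, since \eqref{images} exhibits $z\mapsto g(t,0,z)$ as an even function of $z\in\mathbb{R}^2$.

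There is no genuine obstacle here: both statements reduce to term-by-term manipulations of absolutely convergent series. The only point worth making explicit in the write-up is the absolute convergence of the series for $t>0$, which is what licenses the reindexing $k\mapsto -k$ used in both verifications.
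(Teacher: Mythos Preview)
Your proposal is correct and follows exactly the approach the paper indicates: the paper does not write out a proof but simply states that the properties are easy to check using \eqref{kernel_fourier} or \eqref{images}, which is precisely what you do. Your explicit mention of absolute convergence for $t>0$ to justify the reindexing $k\mapsto -k$ is a welcome clarification that the paper leaves implicit.
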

Following an idea of \cite{Morien1999}, we obtain  estimates  on the
heat kernel and its gradient in the two dimensional case.
\begin{theorem}
\label {beta_gradient}
For fixed $0<s<t$ and $x \in[0,2\pi]$ the following estimates hold:
\begin{enumerate}[label=\roman{*}., ref=(\roman{*})]
\item for every $ 0<\beta < \frac 43$ there exists a constant $C_{\beta}>0$ such that 
\begin{equation}
\label{beta1}
\int_D |\nabla_yg(s,x,y)|^{\beta}{\rm d}y \le C_{\beta}s^{-\frac{3 \beta}{2}+1}\end{equation}
and 
\begin{equation}
\label{beta2}
\int_0^t \int_D| \nabla_yg(s,x,y)|^{\beta}{\rm d}y {\rm d}s \le C_{\beta}t^{-\frac{3 \beta}{2}+2}
\end{equation}
\item for every $ 0<\beta < 2$ there exists a constant $C_{\beta}>0$ such that
\begin{equation}
\label{beta_kernel_0}
\int_D|g(s,x,y)|^{\beta}{\rm d}y  \le C_{\beta} s^{1-\beta}
\end{equation}
and
\begin{equation}
\label{beta_kernel}
\int_0^t \int_D|g(s,x,y)|^{\beta}{\rm d}y {\rm d}s \le C_{\beta} t^{2-\beta}.
\end{equation}
\end{enumerate}
\end{theorem}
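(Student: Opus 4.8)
The plan is to work directly from the method-of-images representation \eqref{images}, since the Gaussian summands there are well adapted to pointwise estimates on $D=[0,2\pi]^2$. First I would record that, by the second property in Proposition \ref{regol_g}, it suffices to estimate $\int_D|\nabla_y g(s,0,z)|^\beta\,{\rm d}z$ and $\int_D|g(s,0,z)|^\beta\,{\rm d}z$ with $z=x-y$ ranging over $D$ (up to a translation that does not affect the integral over the torus). Differentiating \eqref{images} in $y$ gives
\begin{equation*}
\nabla_y g(s,x,y)=\frac{1}{4\pi s}\sum_{k\in\mathbb{Z}^2}\frac{x-y+2k\pi}{2s}\,e^{-\frac{|x-y+2k\pi|^2}{4s}},
\end{equation*}
so that $|\nabla_y g(s,x,y)|\le C s^{-2}\sum_{k\in\mathbb{Z}^2}|x-y+2k\pi|\,e^{-|x-y+2k\pi|^2/(4s)}$ and, similarly, $|g(s,x,y)|\le C s^{-1}\sum_{k\in\mathbb{Z}^2} e^{-|x-y+2k\pi|^2/(4s)}$.

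The key step is the following: for $z\in D$ and $k\in\mathbb{Z}^2$, the translates $z+2k\pi$ are separated, and exactly one of them (the one with $k=0$, after reducing $z$ to a fundamental domain) can be close to the origin. For that term I would use the raw bound; for the remaining terms, $|z+2k\pi|\gtrsim |k|$ (for $|k|\ge 1$), so $e^{-|z+2k\pi|^2/(4s)}\le e^{-c|k|^2/s}$, and the tail $\sum_{|k|\ge1}|z+2k\pi|^\alpha e^{-c|k|^2/s}$ is dominated by $C s^{\alpha/2+1}$ for small $s$ (comparing the sum with a Gaussian integral) and is bounded for $s$ bounded away from $0$ — in either case it is $O(1)$ on any finite interval $s\in(0,t]$, hence negligible compared with the singular contribution. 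So the whole estimate reduces to the single-Gaussian, whole-space computation: for the gradient,
\begin{equation*}
\int_D\Big(s^{-2}|z|\,e^{-\frac{|z|^2}{4s}}\Big)^{\beta}{\rm d}z
\le s^{-2\beta}\int_{\mathbb{R}^2}|z|^{\beta}e^{-\frac{\beta|z|^2}{4s}}{\rm d}z
= C_\beta\, s^{-2\beta}\, s^{\frac{\beta}{2}+1}=C_\beta\, s^{-\frac{3\beta}{2}+1},
\end{equation*}
which is the claimed \eqref{beta1}; the scaling $z\mapsto s^{1/2}z$ makes the exponent bookkeeping transparent, and finiteness of $\int_{\mathbb{R}^2}|z|^\beta e^{-\beta|z|^2/4}\,{\rm d}z$ requires only $\beta>-2$, so the genuine restriction $\beta<\tfrac43$ comes in only at the time-integration stage. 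Indeed \eqref{beta2} follows by integrating \eqref{beta1}: $\int_0^t s^{-3\beta/2+1}\,{\rm d}s$ converges (and equals $C_\beta t^{-3\beta/2+2}$) precisely when $-\tfrac{3\beta}{2}+1>-1$, i.e. $\beta<\tfrac43$. The kernel estimates \eqref{beta_kernel_0}, \eqref{beta_kernel} are obtained the same way: $\int_D (s^{-1}e^{-|z|^2/4s})^\beta\,{\rm d}z\le s^{-\beta}\int_{\mathbb{R}^2}e^{-\beta|z|^2/4s}\,{\rm d}z=C_\beta s^{1-\beta}$, and time integration needs $1-\beta>-1$, i.e. $\beta<2$.

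The main obstacle — really the only point that needs care rather than routine calculation — is making rigorous the claim that the $k\ne0$ terms in the image sum contribute only a lower-order (bounded on $(0,t]$) amount, uniformly in $x$ and in $s$ near $0$; one must be slightly careful that after extracting one near-origin translate the remaining translates are genuinely bounded away from $0$ by a constant independent of $z\in D$, and that interchanging the sum with the integral and with the $\ell^\beta$-versus-$\ell^1$ triangle inequality (when $\beta<1$ one should sum inside, when $\beta\ge1$ one can use Minkowski) is legitimate — both follow from the fast Gaussian decay. Once that reduction is in place, everything else is the elementary scaling computation above, and the stated ranges of $\beta$ and the powers of $s$ and $t$ drop out automatically.
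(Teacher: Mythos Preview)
Your proposal is correct and follows the same overall strategy as the paper: both start from the method-of-images representation \eqref{images}, isolate the near-origin Gaussian as the singular part, show the remaining images contribute only a bounded (hence negligible) amount, and then reduce to the whole-space Gaussian scaling computation, with the restrictions $\beta<\tfrac43$ and $\beta<2$ entering only through the time integral.

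The one genuine difference is in how the singular part is isolated. The paper first factorizes $g(t,x,y)=g_1(t,x_1,y_1)g_2(t,x_2,y_2)$ into one-dimensional kernels, then (following \cite{Morien1999}) writes each $g_i$ as the sum of the three nearest Gaussians $H_i^1,H_i^2,H_i^3$ (corresponding to $k_i\in\{-1,0,1\}$) plus a $C^\infty$ remainder $\bar g_i$; this yields a $4\times4$ grid of products, with the nine $H_1^kH_2^l$ terms carrying the singularity and the mixed terms involving $\bar g_i$ handled separately (Remark in the Appendix). Your route is more direct: you stay in two dimensions, shift the integration to the fundamental domain $[-\pi,\pi]^2$ so that only the $k=0$ image can approach the origin, and bound the entire tail $\sum_{|k|\ge1}$ at once via $|z+2k\pi|\gtrsim|k|$. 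Your version avoids the one-dimensional detour and the cross-term bookkeeping; the paper's version, by keeping $k_i=\pm1$ explicitly, avoids having to shift the fundamental domain. Either way the core computation is identical.
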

This result is proven in \ref{Appendix}.


\subsection{The Biot-Savart law}
\label{biot_savart_section}
Now we deal with the Biot-Savart law expressing the velocity vector field $v$ 
in terms of the vorticity scalar field $\xi$ (we mainly refer to \cite{Majda2002} 
and \cite{Marchioro2012}). 
We have $\xi = \nabla^{\perp} \cdot v$; by  taking the {\it curl} in both
sides of this relationship we get 
\begin{equation}
\begin{cases}
-\Delta v=\nabla^{\perp} \xi
\\
\nabla \cdot v=0
\\v \text{ periodic}
\end{cases}
\end{equation}
This allows to express the velocity in terms of the vorticity.
In terms of Fourier series, if
\begin{equation}
\label{vel_vor0}
\xi(x)=\frac{1}{2 \pi}\sum_{k \in \mathbb{Z}_0^2}\xi_ke^{ik \cdot x},
\end{equation}
then 
\begin{equation}
\label{vel_vor}
v(x) = -\frac{i}{2 \pi} \ \sum_{k \in \mathbb{Z}_0^2}  \xi_k \frac{k^\perp}{|k|^2}e^{i k \cdot x}.
\end{equation}
This shows that the velocity $v$ has one order more of
regularity with respect to the vorticity $\xi$: if $\xi
\in W^{b-1,p}$ then $ v \in H^b_p$.
In particular, the norms $\| v\|_{H^b_p}$ and $\|\xi\|_{W^{b-1,p}}$
are equivalent.

In general (see, e.g., \cite[Chapter 1]{Marchioro2012}), 
the Biot-Savart law expresses the velocity in term of the
vorticity as
\begin{equation}
\label{v}
v(x)= (k \ast \xi)(x)=\int_{D} k(x-y)\xi(y)\,{\rm d} y,
\end{equation}
where the Biot-Savart kernel is given by
\begin{equation}
\label{k} k= \nabla^{\perp}G= \left( -\frac{\partial G}{\partial x_2}, \frac{\partial G}{\partial x_1}\right)
\end{equation}
and $G$ is the Green function of the Laplacian on the torus  with mean zero.
Notice that from \eqref{v} it is evident that the relation between $ v$ and $\xi$ is non local in space.

We summarize the basic properties of the Biot-Savart kernel in the
following lemma (see \cite[Lemma 2.17]{Brzezniak2016}).
\begin{lemma}\label{stimaBS}
For every $1\le p<2$ the  map $ k$, defined above, is an $\left[L^p(D)\right]^2$ 
divergence-free (in the distributional sense) vector field.
\end{lemma}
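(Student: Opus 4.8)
The plan is to deduce both assertions from the local structure of the Green function $G$ near the origin, which is the only place where $k$ can fail to be bounded.

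First I would recall that $G$ is characterized by $-\Delta G=\delta_0-\frac{1}{(2\pi)^2}$ in the sense of distributions, $2\pi$-periodic and of zero mean. On a ball $B_\rho$ around the origin containing no other point of $2\pi\mathbb{Z}^2$, the function $G_0(x):=G(x)+\frac{1}{2\pi}\log|x|$ satisfies $-\Delta G_0=-\frac{1}{(2\pi)^2}$ there, since $-\frac{1}{2\pi}\log|x|$ is the free fundamental solution of $-\Delta$ on $\mathbb{R}^2$; as the right-hand side is smooth, interior elliptic regularity gives $G_0\in C^\infty(B_\rho)$, while $G$ is harmonic, hence $C^\infty$, on $D\setminus\{0\}$. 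Thus $G(x)=-\frac{1}{2\pi}\log|x|+G_0(x)$ on $B_\rho$. (The same decomposition can be read off from the image series \eqref{images} by writing $G(x)=\int_0^\infty\!\big(g(t,x,0)-\tfrac{1}{(2\pi)^2}\big)\,{\rm d}t$ and isolating the $k=0$ term of the sum.)

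Since $k=\nabla^\perp G$ and $\nabla^\perp\!\big(-\tfrac{1}{2\pi}\log|x|\big)=-\tfrac{1}{2\pi}\tfrac{x^\perp}{|x|^2}$, the previous step gives $k(x)=-\tfrac{1}{2\pi}\tfrac{x^\perp}{|x|^2}+\nabla^\perp G_0(x)$ on $B_\rho$, whence $|k(x)|\le C|x|^{-1}$ near the origin and $k$ is bounded on $D\setminus B_{\rho/2}$. As $\int_{|x|<\rho}|x|^{-p}\,{\rm d}x=2\pi\int_0^\rho r^{1-p}\,{\rm d}r$ is finite exactly when $p<2$, this proves $k\in[L^p(D)]^2$ for every $1\le p<2$; the threshold $p=2$ is consistent with the Fourier coefficients of $k$ in \eqref{vel_vor} decaying like $|k|^{-1}$, which just fails to be square summable on $\mathbb{Z}^2_0$.

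For the divergence-free property, note that $p<2$ gives in particular $k\in[L^1(D)]^2$ and, since $\nabla G=-k^\perp$, also $G\in W^{1,1}$. Then for every periodic $\phi\in C^\infty$, integrating the distributional derivatives of $G$ by parts against the smooth function $\phi$,
\[
\langle\nabla\cdot k,\phi\rangle=-\int_D k\cdot\nabla\phi\,{\rm d}x
=-\int_D\big[(-\partial_{x_2}G)\,\partial_{x_1}\phi+(\partial_{x_1}G)\,\partial_{x_2}\phi\big]{\rm d}x
=\int_D G\,(\partial_{x_1}\partial_{x_2}\phi-\partial_{x_2}\partial_{x_1}\phi)\,{\rm d}x=0,
\]
so $\nabla\cdot k=0$ in the distributional sense. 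The only point that is not a one-line computation is the local structure of $G$ in the first step; this is classical, but deserves the small care of being localized near each lattice point (equivalently, extracted directly from the image series), since the comparison function $-\frac{1}{2\pi}\log|x|$ is not itself periodic. Everything else is a routine integrability estimate and an integration by parts.
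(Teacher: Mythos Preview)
Your argument is correct. The paper itself does not give a proof of this lemma; it simply cites \cite[Lemma 2.17]{Brzezniak2016}. Your approach---extracting the local decomposition $G(x)=-\tfrac{1}{2\pi}\log|x|+G_0(x)$ with $G_0$ smooth via elliptic regularity, deducing the pointwise bound $|k(x)|\le C|x|^{-1}$, and then checking integrability and the distributional divergence---is exactly the content behind that citation; indeed the paper quotes the resulting estimate $|\nabla G(x)|\le C(|x|^{-1}+1)$ in Remark~\ref{p_int_BS} with the same references. Your divergence-free step is also clean: once $G\in W^{1,1}$, two integrations by parts against a smooth periodic test function give the vanishing directly by equality of mixed partials.
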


\begin{remark}
\label{p_int_BS}
In principle, for every $p<2$, $ \int_{D} k(x-y)\, {\rm d} y$ is a constant that depends on $x$, but it can be easily majored by a constant which does not depend on $x$. This is straightforward using the estimate $|\nabla G(x)| \le C(|x|^{-1}+1)$ (see e.g. \cite[Chapter 1]{Marchioro2012} and \cite[Proposition B.1]{Brzezniak2016}) and recalling that \eqref{k} holds.
\end{remark}

Therefore we have some useful estimates. From \eqref{vel_vor}, using
the Sobolev embedding $H^1_p\subset L_\infty$ for $p>2$ and the
equivalence of the norms 
$\| v\|_{H^1_p}$ and $\|\xi\|_{L^p}$  we infer that for any
$p>2$ there exists a constant $C_p$ such that 
\begin{align}
\label{L_infty}
\|k\ast \xi\|_{L_{\infty}}=\| v\|_{L_{\infty}} \le   C_p\|\xi\|_{L^p}.
\end{align}
From \eqref{v} and Lemma \ref{stimaBS}, using Young's inequality when 
$p \ge 1$, $1 \le \alpha <2$, $\beta \ge 1$ with
 $\frac{1}{p}+1=\frac{1}{\alpha}+\frac{1}{\beta}$ 
we infer that
\begin{equation}
\label{_Bio_Sav_}
\|k\ast \xi\|_{L_p}  =  \| v\|_{L_{p}} \le \| k\|_{L_{\alpha}} \|\xi\|_{L^{\beta}}.
\end{equation}


\subsection{The random forcing term}
\label{hyp_res}
In this subsection we deal with the stochastic term that appears in \eqref{Walsh_0}.

Given $T>0$, let $(\Omega, \mathcal{F}, \mathbb{F}=\left\{ \mathcal{F}_t\right\}_{0\le t \le T}, \mathbb{P})$ be a given stochastic basis.
Let $Q: \dot L^2_{\sharp} \rightarrow \dot L^2_{\sharp}$ be a positive symmetric bounded linear operator.
We define $L^2_Q$ as the completition of the space of all square integrable, zero mean-value, periodic functions $\varphi:D \rightarrow \mathbb{R}$ with respect to the scalar product
\begin{equation*}
\langle \varphi, \psi \rangle_{L^2_Q}= \langle Q\varphi, \psi\rangle_{L^2}.
\end{equation*}
Set $\mathcal{H}_T=L^2(0,T;L^2_Q)$. 
This space is a real separable Hilbert space with respect to the scalar product 
\begin{equation}
\langle f,g\rangle_{\mathcal{H}_T}=\int_0^T\langle f(s), g(s)\rangle_{L^2_Q} \, 
{\rm d}s =\int_0^T\langle Qf(s), g(s)\rangle_{L^2} \, {\rm d}s.
\end{equation}
Let us consider the isonormal Gaussian process $W=\{W(h), h \in \mathcal{H}_T\}$
(see, e.g., \cite{Nualart2006}). 
The map $h \rightarrow W(h)$ provides a linear isometry from
 $\mathcal{H}_T$ onto $\mathcal{H}$, which is a closed subset of 
$L^2(\Omega, \mathcal{F}, \mathbb{P})$ whose elements are zero-mean Gaussian random variables.
The isometry reads as
\begin{equation*}
\mathbb{E}\left( W(h) \ W(g) \right) = \langle h, g \rangle_{\mathcal{H}_T}.
\end{equation*}

We understand the stochastic term appearing in equation \eqref{Walsh_0} in the following sense:
for $h \in \mathcal{H}_T$, we set
\begin{equation}
\label{W_h}
W(h)=\int_0^T \int_Dh(s,y) \, w({\rm d}y, {\rm d}s)
\end{equation}
namely, $\int_0^T \int_Dh(s,y) \, w({\rm d}y, {\rm d}s)$ 
is a zero-mean Gaussian 
random variable with covariance $\mathbb E[W(h)^2]=\|h\|^2_{\mathcal{H}_T}$.

We point out that the stochastic term introduced above by means of 
the linear isometry $W$ can be understood in the setting introduced 
by Da Prato-Zabczyk in \cite{DaPrato1992} as well as in the setting 
introduced by Walsh in \cite{Walsh1986}.
First,  we can write $W(h)$ as
\begin{equation} \label{Dap_Zab}
W(h)= \sum_{j} \int_0^T
 \langle h(s, \cdot),\tilde e_j \rangle_{L^2_Q} \, {\rm d} \beta_s(\tilde e_j)
\end{equation}
where $\{\tilde e_j\}_j$ is a complete orthonormal basis of $L^2_Q$ 
and  $\beta_s(\tilde e_j)=W(1_{[0,s]}\tilde e_j)$; hence
$\left\{\beta(\tilde e_j)\right\}_j$
 is 
a sequence of independent standard one-dimensional Brownian motions on 
$(\Omega, \mathcal{F}, \mathbb{P})$ adapted to $\{ \mathcal{F}_t\}_{t \in \left[0,T\right]}$.
By setting $M_t(A):=W(\pmb{1}_{\left[0,t \right]}\pmb{1}_A)$ for all 
$t \in \left[0,T\right]$ and $A \in \mathcal{B}_b(\mathbb{R}^2)$, 
we construct a martingale measure with covariance $Q$ and 
(see e.g. \cite{Dalang2011}) \eqref{Dap_Zab} coincides with the stochastic integral in the Walsh sense.
Moreover, the isonormal Gaussian process $W$ can be associated to a $Q$-Wiener process $\mathcal{W}_t$ on $\dot L^2_{\sharp}$ (as defined in \cite{DaPrato1992}) in the following way:%
\begin{equation}
\label{equality}
\langle \mathcal{W}_t, h\rangle_{L^2}=W(\pmb{1}_{\left[0,t\right]}h) \qquad \forall h \in \dot L^2_{\sharp},
\end{equation}
 and \eqref{W_h} 
coincides with the integral w.r.t. $\mathcal{W}$, in a sense made precise in \cite[Section 3.4]{Dalang2011}.
The stochastic convolution appearing in \eqref{Walsh_0} has now to be
understood in the described ways. Notice that, by construction, the random forcing term
is periodic and with zero mean in the space variable.
Since we are in a spatial domain of
dimension larger than one, it is not surprising (see, e.g.,
\cite{DaPrato1992}) that we cannot consider $Q$ to be the indentity,
but we need $Q$ to have some regularizing effect.
We choose to work with a covariance operator of the form 
\begin{equation}
\label{delta}
Q=(-\Delta)^{-b},
\end{equation}
for some $b>0$. This means that 
\[
Qe_k=|k|^{-2b}e_k \qquad \forall k \in \mathbb Z_0^2
\]
and a complete orthonormal basis of $L^2_Q$ is given by 
$\tilde e_k(x)=
\frac 1{\sqrt 2 \pi} |k|^b \cos(k\cdot x)$ 
and $\tilde e_{-k}(x)=\frac 1{\sqrt 2 \pi}|k|^b\sin(k\cdot x)$ 
for $k\in \mathbb Z^2_+$.
Notice that the choice of $Q$ as in \eqref{delta} is made only in order to simplify 
some computations but it does not  prevent to consider a more
general operator $Q$ which does not commute with the Laplacian operator or which has finite dimensional range. 
By Tr$Q$ we denote the trace of the operator $Q$. If $Q$ is as in \eqref{delta} then Tr$Q= \sum_{k \in \mathbb{Z}^2_0}|k|^{-2b}$.

Let us show that  when $b>0$ in \eqref{delta}  the stochastic
integral $\int_0^t \int_D g(t-s,x,y)\,  w({\rm d}y, {\rm d}s)
$ is well defined. This is equivalent to have
$g(t- \cdot, x, \cdot) \in \mathcal{H}_t$ for every $t>0$. Indeed, 
\begin{equation}\label{conv_stoc}
\begin{split}
\|g(t- \cdot, x, \cdot)\|^2_{\mathcal{H}_t}
&
=\int_0^t \|g(t-s,x,\cdot)  \|^2_{L^2_Q} \, {\rm d}s
=\int_0^t \langle Qg(t-s,x,\cdot), g(t-s,x,\cdot)\rangle_{L^2} \, {\rm d}s
\\
&
=  \int_0^t \|Q^{\frac12}g(t-s,x,\cdot)  \|^2_{L^2} \, {\rm d}s 
=\int_0^t \sum_{k \in \mathbb{Z}_0^2}|\langle e_k, Q^{\frac12} g(t-s,x,\cdot)\rangle_{L^2}|^2 \, {\rm d}s
\\
&=\sum_{k \in \mathbb{Z}_0^2}\int_0^t 
 \left| \langle Q^{\frac12}e_k, g(t-s,x,\cdot)\rangle_{L^2}\right|^2\, {\rm d}s
\\
&=\sum_{k \in \mathbb{Z}_0^2} |k|^{-2b} \int_0^t 
  \left|\langle e_k, g(t-s,x,\cdot)\rangle_{L^2}\right|^2\, {\rm d}s
 \\
&=\sum_{k \in \mathbb{Z}_0^2} |k|^{-2b} \int_0^t e^{-2|k|^2(t-s)}
 |e_k(x)|^2 \, {\rm d}s \qquad\text{ by } \eqref{kernel_fourier}
  \\
&=\frac{1}{(2\pi)^2}\sum_{k \in\mathbb{Z}_0^2}   |k|^{-2b}\frac{(1-e^{-2|k|^2t})}{2|k|^2} \qquad \text{since} \ |e_k(x)|=\frac{1}{2\pi}
\\
&\le \frac {1}{2(2\pi)^2}\sum_{k \in \mathbb{Z}_0^2}  |k|^{-2-2b}.
\end{split}
\end{equation}
The latter series is convergent if and only if $b>0$.
The hypothesis $b>0$ is sufficient for the space-time continuity of the 
stochastic convolution's trajectories as well (see \cite[Theorem 2.13]{DaPrato2004b}).
\begin{remark}
Notice that, since we work on the flat torus, we have good estimates on the norm 
of the normalized eigenfunctions $e_k$  of the Laplacian. 
Thanks to this fact we have rather weak assumptions on the covariance
operator of the noise, i.e. the exponent $b$ in \eqref{delta}. 
However, in a general domain of $\mathbb{R}^2$
with smooth boundary, the growth of normalized eigenfunctions is
more difficult to control. 
Useful estimates for this case are provided for instance in \cite{Grieser2002}.
\end{remark}


\section{Some preliminaries Lemmas}
\label{prliminary_lemmas}

In this Section we establish some  estimates
showing the regularizing effect of convolution with the gradient of the kernel $g$ or with $g$ itself, 
as they appear in the formulation \eqref{Walsh_0} \`a la Walsh of our problem.

Let $J$ be the linear operator defined as
\begin{equation}\label{defJfi}
(J \varphi)(t,x):=\int_0^t \int_D \nabla_y g(t-s,x,y) \cdot  \varphi (s,y) \, {\rm d}y\, {\rm d}s,
\end{equation}
for $t \in \left[0,T\right]$, $x \in D$.
We have that $J$ is well defined in some spaces as defined in the following lemma.
\begin{lemma}
\label{regularization}
i) 
Let $p\ge 1$, 
$\alpha \ge 1$, $1\le\beta < \frac 43$, $\gamma>\frac{2\beta}{2-\beta}$ 
such that $\frac{1}{\beta}=1 + \frac{1}{p}-\frac{1}{\alpha}$. \\
Then $J$ is a bounded linear operator from $L^{\gamma}(0,T; L_{\alpha})$ 
into $L^{\infty}(0,T;L^p)$. Moreover there exists a constant $C_\beta$  such that
\begin{equation}
\label{stima Lp0}
\|J( \varphi)(t, \cdot)\|_{L^p}\le C_\beta\int_0^t (t-s)^{\frac{1}{\beta}-\frac 32} \| \varphi(s, \cdot)\|_{L_{\alpha}} \, {\rm d}s,
\end{equation}
\begin{equation}
\label{stima Lp}
\|J(\varphi)(t, \cdot)\|_{L^p}
\le C_{\beta} t^{\frac{1}{\beta}-\frac 32+\frac{\gamma-1}{\gamma}}\left(\int_0^t\|\varphi(s, \cdot)\|^{\gamma}_{L_{\alpha}}\, {\rm d}s\right)^{\frac{1}{\gamma}}
\end{equation}
for all $t \in \left[0,T\right]$.

ii) Let $p>4$ and $\gamma >\frac{2p}{p-2}$.
Then the operator $J$ maps $L^{\gamma}(0,T;L_p)$ into $C(\left[0,T\right] \times D)$. Moreover there exists
a constant $C_{T,p}$ such that
\begin{equation}
\label{continuity}
\sup_{0\le t\le T}\sup_{x\in D}|(J  \varphi)(t,x)| \le C_{T,p}\left( \int_0^T\| \varphi(r, \cdot)\|^{\gamma}_{L_p}\, {\rm d}r\right)^{\frac{1}{\gamma}}.
\end{equation}
\end{lemma}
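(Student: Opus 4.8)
\textbf{Proof proposal for Lemma \ref{regularization}.}

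The plan is to treat the two parts in sequence, with part (ii) relying on the Besov/H\"older-type gain extracted from the same kernel estimates used in part (i). For part (i), I would fix $t\in[0,T]$ and $x\in D$ and start from the defining formula \eqref{defJfi}. The key is to apply H\"older's inequality in the $y$-variable at each fixed time slice $s$, splitting the integrand $\nabla_y g(t-s,x,y)\cdot\varphi(s,y)$ between $L^\beta_y$ and the conjugate exponent; the arithmetic condition $\frac1\beta = 1 + \frac1p - \frac1\alpha$ is exactly what makes the convolution-type bookkeeping close, i.e. it is the exponent appearing in Young's inequality for the convolution $k_{t-s}\ast\varphi(s)$ when one wants to land in $L^p$ starting from $L_\alpha$ with a kernel in $L^\beta$. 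Concretely, using that the Biot–Savart-type reasoning is not needed here but the kernel bound \eqref{beta1} from Theorem \ref{beta_gradient} gives $\|\nabla_y g(s,x,\cdot)\|_{L^\beta}\le C_\beta s^{-\frac{3}{2}+\frac1\beta}$ (rewriting the exponent $-\frac{3\beta}{2}+1$ divided by $\beta$), I obtain
\[
\|J(\varphi)(t,\cdot)\|_{L^p}\le C_\beta\int_0^t (t-s)^{\frac1\beta-\frac32}\,\|\varphi(s,\cdot)\|_{L_\alpha}\,{\rm d}s,
\]
which is \eqref{stima Lp0}. The exponent $\frac1\beta-\frac32$ is $>-1$ precisely because $\beta<\frac43$, so this is an integrable singularity. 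Then \eqref{stima Lp} follows from \eqref{stima Lp0} by one more application of H\"older in the $s$-variable with exponents $\gamma$ and $\gamma'=\frac{\gamma}{\gamma-1}$: the time factor becomes $\big(\int_0^t (t-s)^{(\frac1\beta-\frac32)\gamma'}{\rm d}s\big)^{1/\gamma'}$, finite since $\gamma>\frac{2\beta}{2-\beta}$ is equivalent to $(\frac1\beta-\frac32)\gamma'>-1$, and a short computation gives the power $t^{\frac1\beta-\frac32+\frac{\gamma-1}{\gamma}}$. Boundedness of $J$ from $L^\gamma(0,T;L_\alpha)$ into $L^\infty(0,T;L^p)$ is then immediate by taking the supremum over $t\in[0,T]$.

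For part (ii), the goal is an $L^\infty$-in-space-and-time bound together with continuity, so one cannot merely iterate part (i); one must gain a genuine modulus of continuity. I would again start from \eqref{defJfi} with $p>4$, $\gamma>\frac{2p}{p-2}$, and apply H\"older in $y$ with exponents $\frac{p}{p-1}$ and $p$ (i.e. take $\beta=\frac{p}{p-1}<\frac43$ when $p>4$, and $\alpha=p$, which is compatible with the constraint in (i)). This yields the pointwise bound
\[
|(J\varphi)(t,x)|\le\int_0^t \|\nabla_y g(t-s,x,\cdot)\|_{L^{p/(p-1)}}\,\|\varphi(s,\cdot)\|_{L_p}\,{\rm d}s\le C_p\int_0^t (t-s)^{-\frac3{2}+\frac{p-1}{p}}\,\|\varphi(s,\cdot)\|_{L_p}\,{\rm d}s,
\]
and then H\"older in $s$ with exponent $\gamma$ gives \eqref{continuity}, the time integral $\int_0^T s^{(-\frac32+\frac{p-1}{p})\gamma'}{\rm d}s$ being finite exactly when $\gamma>\frac{2p}{p-2}$. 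That disposes of the uniform bound and of boundedness into $L^\infty([0,T]\times D)$. For the continuity statement one argues in the standard way: one shows separately continuity in $x$ (for fixed $t$) and in $t$ (uniformly in $x$), writing the increment $(J\varphi)(t,x)-(J\varphi)(t',x')$ as a sum of terms involving $\nabla_y g(t-s,x,\cdot)-\nabla_y g(t'-s,x',\cdot)$ and, for the time increment, an extra boundary piece $\int_{t'}^t$; each piece is estimated by the same H\"older argument using, for the differences, continuity of $s\mapsto\nabla_y g(s,x,\cdot)$ in $L^{p/(p-1)}$ away from $s=0$ together with the strict gap left by the inequality $\gamma>\frac{2p}{p-2}$ (so there is a little room for an $\varepsilon$ of H\"older regularity), and dominated convergence. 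A cleaner alternative I would consider is to reidentify $J(\varphi)(t,\cdot)$ via \eqref{kernel}–\eqref{semigroup} as $\int_0^t S(t-s)\,\mathrm{div}\,\varphi(s)\,{\rm d}s$ in a negative Sobolev space and use the smoothing $\|S(\tau)\|_{H^{-1+\varepsilon}\to W^{a}}\lesssim \tau^{-(1+a-\varepsilon)/2}$ together with the embedding $W^a\hookrightarrow L^\infty$ for $a>1$ recorded in \eqref{Sobolev} and the bullet after it; this packages the kernel estimates abstractly and makes continuity follow from strong continuity of the semigroup.

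The main obstacle is the continuity part of (ii): the uniform bound is a direct H\"older computation, but upgrading it to joint space-time continuity requires handling the singularity of $\nabla_y g(\tau,x,\cdot)$ as $\tau\to0^+$ while simultaneously controlling the $x$-dependence and the moving upper limit of integration in $t$. The quantitative input that makes this work is that the exponent constraints in the hypotheses are \emph{strict} ($\beta<\frac43$, $\gamma>\frac{2p}{p-2}$, $a>1$), which leaves a positive margin that can be spent on a fractional modulus of continuity for the kernel differences; the estimates \eqref{beta1}–\eqref{beta2} of Theorem \ref{beta_gradient}, applied at the level of the kernel (or of its increments, via the mean value theorem in $x$ and in $\sqrt\tau$ on the image-sum representation \eqref{images}), supply precisely the bounds needed. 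Once the increment estimates are in hand, an $\varepsilon/3$ argument plus dominated convergence finishes the proof.
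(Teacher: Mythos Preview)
Your argument for part (i) is correct and coincides with the paper's: Minkowski to bring the $L^p$-norm inside the time integral, Young's convolution inequality with the exponent relation $\frac1\alpha+\frac1\beta=1+\frac1p$, the kernel bound \eqref{beta1}, and then H\"older in time.

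For part (ii) there is a genuine difference in route. You obtain the pointwise bound directly: H\"older in $y$ with exponents $\frac{p}{p-1}$ and $p$ gives $|(J\varphi)(t,x)|\le C_p\int_0^t (t-s)^{-\frac{p+2}{2p}}\|\varphi(s,\cdot)\|_{L_p}\,{\rm d}s$, and then H\"older in $s$ with exponent $\gamma>\frac{2p}{p-2}$ yields \eqref{continuity}. This is correct and short. The paper instead deploys the factorization method: it rewrites $J\varphi$ via \eqref{factorization} and the Chapman--Kolmogorov identity as an iterated integral involving an auxiliary $Y^a$, estimates $\|Y^a(r,\cdot)\|_{L_p}$, and after a beta-function computation arrives at the \emph{same} inequality $|(J\varphi)(t,x)|\le C_p\int_0^t (t-s)^{-\frac{p+2}{2p}}\|\varphi(s,\cdot)\|_{L_p}\,{\rm d}s$ that you get in one line. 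So for the uniform bound your direct argument is more economical; the factorization detour buys nothing additional here (it is the natural tool when a stochastic integral is involved, as in Lemma~\ref{regol_conv}, but for the deterministic $J$ it is overkill).

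For the continuity claim, however, the paper's argument is cleaner than either of your proposals. Rather than estimating kernel increments or passing through semigroup smoothing in negative Sobolev spaces, the paper simply observes that for \emph{step functions} $\varphi$ the map $(t,x)\mapsto(J\varphi)(t,x)$ is continuous (dominated convergence, using $\int_0^t\int_D|\nabla_y g|\,{\rm d}y\,{\rm d}s<\infty$ from \eqref{beta2}), and then invokes the already-established uniform bound \eqref{continuity} together with density of step functions in $L^\gamma(0,T;L_p)$: $J\varphi$ is a uniform limit of continuous functions, hence continuous. Your increment approach would work but is considerably more laborious; the density argument sidesteps all of it.
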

\begin{proof}
These results are inspired by  \cite[Lemma 3.1]{Gyongy1998}, but we need to perform all the computations since now we are in a two dimensional domain.

We first prove {\it i)}. 
Using the continuous version of  Minkowski's inequality (see e.g. \cite[Theorem 6.2.14]{Stroock1999}), then Young's inequality with $\frac{1}{\alpha}+ \frac{1}{\beta}=1+\frac{1}{p}$, and finally H\"older's inequality with $\gamma >\frac{2\beta}{2-\beta}$  we get
\begin{multline*}
\left\Vert \int_0^t \int_D  \nabla_yg(t-s, \cdot, y) \cdot  \varphi(s,y) \, {\rm d}y {\rm d}s\right \Vert_{L^p} 
\\
\le \int_0^t \left\Vert\int_D\nabla_yg(t-s, \cdot, y) \cdot  \varphi(s,y) \, {\rm d}y \right \Vert_{L^p}{\rm d}s
=\int_0^t \| \nabla_y g(t-s, 0,\cdot) * \varphi(s, \cdot)\|_{L^p}\, {\rm d} s
\\
 \le \int_0^t \|\nabla_y g(t-s,0, \cdot)\|_{L^{\beta}} \| \varphi (s, \cdot)\|_{L_{\alpha}}\, {\rm d}s
 \le C_\beta \int_0^t (t-s)^{\frac{1}{\beta}-\frac 32}\| \varphi(s, \cdot)\|_{L_{\alpha}}\, {\rm d}s \ \text{ by } \eqref{beta1}.
\end{multline*}
This proves \eqref{stima Lp0}. By H\"older's inequality we estimate the latter quantity by
\[
 C_\beta \left(\int_0^t (t-s)^{(\frac{1}{\beta}-\frac 32)\frac{\gamma}{\gamma-1}}{\rm d}s\right)^{\frac {\gamma-1}\gamma}
 \left(\int_0^t\| \varphi(s, \cdot)\|^\gamma_{L_{\alpha}}\, {\rm d}s\right)^{\frac 1\gamma}.
\]
Calculating the first time integral we obtain \eqref{stima Lp}.

As regards {\it ii)}, we use the factorization method (for more details see, e.g.,
 \cite[Section 2.2.1]{DaPrato2004b}), which is based on the 
equality
\begin{equation}
\label{factorization}
\frac{\pi}{\sin (\pi a)} = \int_s^t (t-r)^{a-1}(r-s)^{-a}\, {\rm d}r, \qquad a \in \left(0,1\right).
\end{equation}
We also use the  Chapman-Kolmogorov relation for $s<r<t$
\[
\int_D g(t-r,x,z)g(r-s,z,y)\, {\rm d}z=g(t-s,x,y)
\]
which, thanks to the symmetry of the kernel $g$ in the space variables,  gives
\begin{multline}\label{ugCK}
 \int_D\partial_{z_i}g(t-r,x,z) g(r-s,z,y)\, {\rm d}z
 =\int_D -\partial_{ x_i}g(t-r,x,z) g(r-s,z,y)\ {\rm d}z
 \\
 =-\partial_{x_i}\int_D g(t-r,x,z) g(r-s,z,y)\ {\rm d}z
 =-\partial_{x_i}g(t-s,x,y)=\partial_{y_i}g(t-s,x,y).    
 \end{multline}
Let us show that $J\varphi$, defined in \eqref{defJfi},  has an equivalent expression given by 
\begin{equation}
\label{J}
(J \varphi)(t,x)
=\frac{\sin (\pi a)}{\pi}\int_0^t (t-r)^{a-1}\left(\int_D \nabla_zg(t-r,x,z) \cdot  Y^a(r,z)\, {\rm d}z\right) {\rm d}r
\end{equation}
with
\[
 Y^a(r,z)=\int_0^r \int_D (r-s)^{-a}g(r-s,z,y) \varphi(s,y)\, {\rm d}y\, {\rm d}s.
\]
For this it is enough to check that 
\begin{multline}
\int_0^t \int_D \partial_{y_i} g(t-s,x,y)   \varphi_i (s,y) \, {\rm d}y\, {\rm d}s
\notag\\
=
\frac{\sin (\pi a)}{\pi}\int_0^t (t-r)^{a-1}\left(\int_D \partial_{z_i}g(t-r,x,z)   Y_i^a(r,z)\, {\rm d}z\right) {\rm d}r
\end{multline}
for $i=1,2$. Let us work on the r.h.s.; keeping in mind the definition of $Y^a_i$ and by means of  Fubini theorem
we infer that
\[\begin{split}
\int_0^t &(t-r)^{a-1}\left(\int_D \partial_{z_i}g(t-r,x,z)   Y_i^a(r,z)\, {\rm d}z\right) {\rm d}r
\\&=
\int_0^t (t-r)^{a-1}\left(\int_D \partial_{z_i}g(t-r,x,z)   \right.
\\
& \qquad \left.\Big[\int_0^r \int_D (r-s)^{-a}g(r-s,z,y) \varphi_i(s,y)\, {\rm d}y\, {\rm d}s\Big] {\rm d}z\right) {\rm d}r
\\&=
\int_0^t\left(\int_s^t (t-r)^{a-1}(r-s)^{-a}  \right.
\\
& \qquad \left.
\left[\int_D\Big[\int_D \partial_{z_i}g(t-r,x,z)  g(r-s,z,y) {\rm d}z \Big] \varphi_i(s,y)\, {\rm d}y\right]
   {\rm d}r\right) {\rm d}s
\\&=
\int_0^t\left(\int_s^t (t-r)^{a-1}(r-s)^{-a} \left[\int_D \partial_{y_i}g(t-s,x,y) \varphi_i(s,y)\, {\rm d}y\right]
   {\rm d}r\right) {\rm d}s \ \text{ by } \eqref{ugCK}
\\&=\frac\pi{\sin (\pi a)} \int_0^t\int_D \partial_{y_i}g(t-s,x,y) \varphi_i(s,y)\, {\rm d}y\ {\rm d}s
\qquad \text{ by } \eqref{factorization}.
\end{split}
\]
This proves \eqref{J}. Therefore, by H\"older's inequality
we get
\[\begin{split}
|(J \varphi)(t,x)|
&\le
\frac{\sin (\pi a)}{\pi}
\int_0^t (t-r)^{a-1} \| \nabla_zg(t-r,x,\cdot)\|_{L^{\frac p{p-1}}}\|Y^a(r,\cdot)\|_{L_p}{\rm d}r
\\&\le C_p
\frac{\sin (\pi a)}{\pi} \int_0^t (t-r)^{a-1-\frac 32 +\frac{p-1}p} \|Y^a(r,\cdot)\|_{L_p}{\rm d}r
\ \text{ by \eqref{beta1} if } p>4.
\end{split}
\]
Now we estimate $\| Y^a(r,\cdot)\|_{L_p}$; by means of Minkowsky's and Young's inequalities and using  \eqref{beta_kernel} we infer that
\[\begin{split}
\| Y^a(r,\cdot)\|_{L_p}&
=\left\| \int_0^r\ \int_D  (r-s)^{-a}g(r-s,\cdot,y) \varphi(s,y)\ {\rm d}y\  {\rm d}s\right\|_{L_p}
\\&\le
\int_0^r(r-s)^{-a}\left\| \int_D g(r-s,\cdot,y) \varphi(s,y)\,  {\rm d}y\right\|_{L_p}{\rm d}s
\\&
=\int_0^r(r-s)^{-a}\left\|  g(r-s,0,\cdot) \ast\varphi(s,\cdot)\right\|_{L_p}  {\rm d}s
\\&
\le \int_0^r(r-s)^{-a} \|g(r-s,0,\cdot)\|_{L_1}\|\varphi(s,\cdot)\|_{L_p}  {\rm d}s
\\&
\le C\int_0^r(r-s)^{-a}\|\varphi(s,\cdot)\|_{L_p}  {\rm d}s .
\end{split}\]
Collecting the above estimates, by means of Fubini theorem we obtain that
\begin{align*}
|(J \varphi)(t,x)|
&\le C\frac{\sin (\pi a)}{\pi}
\int_0^t (t-r)^{a-\frac32-\frac 1p}\left(\int_0^r(r-s)^{-a}\|\varphi(s,\cdot)\|_{L_p} {\rm d}s\right) {\rm d}r
\\
&= C\frac{\sin (\pi a)}{\pi}\int_0^t \| \varphi(s,\cdot)\|_{L^p}
\left(\int_s^t (t-r)^{a-\frac32-\frac 1p}(r-s)^{-a} {\rm d}r \right) {\rm d}s.
\end{align*}
With the change of variables $r=s+z(t-s)$ we can compute the inner integral as follows:
\begin{align*}
\int_s^t (t-r)^{a-\frac32-\frac 1p}(r-s)^{-a} {\rm d}r 
&=
(t-s)^{-\frac{p+2}{2p}}\int_0^1 (1-z)^{a-\frac32-\frac 1p} z^{-a}\ dz
\\
&=
(t-s)^{-\frac{p+2}{2p}}\int_0^1 (1-z)^{a-1-\frac {p+2}{2p}} z^{-a}\ dz.
\end{align*}
The latter integral is equal to 
the beta function $B\left(1-a,a-\frac{p+2}{2p}\right)$, which 
 is finite provided $\frac{p+2}{2p} < a <1$; therefore given $p>4$ we choose $a\in \left(\frac{p+2}{2p},1\right)$. Hence
\begin{align*}
|(J \varphi)(t,x)|  
&\le C_{p} \int_0^t (t-s)^{-\frac{p+2}{2p}} \|\varphi(s,\cdot)\|_{L_p}\ {\rm d}s
\\
&\le C_{p} 
\left(  \int_0^t (t-s)^{-\frac{p+2}{2p}\frac\gamma{\gamma-1}} {\rm d}s\right)^{\frac{\gamma-1}{\gamma}}\left(\int_0^t\|\varphi(s, \cdot)\|^{\gamma}_{L_p}\ {\rm d}s\right)^{\frac {1}{\gamma}}
\\&
\le C_{T,p} \left(\int_0^T\|\varphi(s, \cdot)\|^{\gamma}_{L_p}\ {\rm d}s \right)^{\frac {1}{\gamma}}
\end{align*}
for $\frac{p+2}{2p}\frac\gamma{\gamma-1}<1$, i.e. $\gamma>\frac {2p}{p-2}$.

The above estimate shows that
$J\varphi \in L^{\infty}(\left[0,T\right] \times D)$
 for every $\varphi \in L^{\gamma}(0,T;L_p)$. 
 It remains to prove that $J \varphi \in C(\left[0,T\right] \times D)$.
Let us notice that for step functions $ \varphi$, $J\varphi$ is a space-time continuous function; this follows from the well posedness of the integral $\int_0^t \int_D \nabla_yg(t-s,x,y) \, {\rm d}y\, {\rm d}s$ (let us recall that $\int_0^t \int_D |\nabla_yg(t-s,x,y)| \, {\rm d}y\, {\rm d}s < \infty$, see \eqref{beta2}). This kind of regularity can be then extended to every $ \varphi \in L^{\gamma}(0,T;L_p)$ by a standard approximation procedure.
\end{proof}

The second result concerns the stochastic integral in equation \eqref{Walsh_0}, i.e. the process
\begin{equation}
\label{reg_z}
z(t,x)= \int_0^t \int_D g(t-s,x,y)\, w({\rm d}y,{\rm d}s)
\end{equation}
solution of
\begin{equation}
\label{stoc}
\begin{cases}
\dfrac{\partial z}{\partial t}(t,x) -\Delta z(t,x)  = w({\rm d}x,{\rm d}t)
\\
z(0,x)=0.
\end{cases}
\end{equation}
We have
\begin{lemma}
\label{regol_conv}
Let $b>0$ in \eqref{delta} and $p>2$. Then 
\begin{equation}
\label{reg_z_Lp}
\mathbb{E} \left[\sup_{t \in \left[0,T\right]}\|z(t, \cdot)\|^p_{L^p}\right] < \infty.
\end{equation}
Moreover, $\mathbb{P}$-a.s. $z$ is  a continuous function on $\left[0,T\right] \times D$.
\end{lemma}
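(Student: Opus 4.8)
The plan is to treat separately the $L^p$-bound \eqref{reg_z_Lp} and the space-time continuity of $z$, using the factorization method as in \cite[Section 2.2.1]{DaPrato2004b} for the first part and the regularity estimates of Lemma \ref{regularization} combined with Theorem \ref{beta_gradient} for the second. First I would recall that the computation \eqref{conv_stoc} already shows $g(t-\cdot,x,\cdot)\in\mathcal H_t$ for $b>0$, so the stochastic integral defining $z(t,x)$ in \eqref{reg_z} makes sense pointwise; the issue is to upgrade this to the stated integrability and path regularity. For \eqref{reg_z_Lp}, the idea is to write, for a fixed $a\in(0,1)$ to be chosen,
\begin{equation*}
z(t,x)=\frac{\sin(\pi a)}{\pi}\int_0^t (t-r)^{a-1}\Big(\int_D g(t-r,x,y)\,Z^a(r,y)\,{\rm d}y\Big)\,{\rm d}r,
\qquad
Z^a(r,y)=\int_0^r\int_D (r-s)^{-a}g(r-s,y,\eta)\,w({\rm d}\eta,{\rm d}s),
\end{equation*}
the identity \eqref{factorization} and the Chapman--Kolmogorov relation justifying this exactly as in the proof of Lemma \ref{regularization}. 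Then by H\"older in $r$ (using that $g(t-r,x,\cdot)\in L^1$ uniformly, by \eqref{beta_kernel_0} with $\beta=1$) one bounds $\|z(t,\cdot)\|_{L^p}$ by $C_{T,p}\big(\int_0^T\|Z^a(r,\cdot)\|_{L^p}^{q}\,{\rm d}r\big)^{1/q}$ for a suitable exponent $q$, provided $a$ is large enough that $(a-1)\frac{q}{q-1}>-1$.

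The main point is then the moment estimate on $Z^a$. Since $Z^a(r,y)$ is, for each $(r,y)$, a centered Gaussian random variable, by the Burkholder/Gaussian hypercontractivity one has $\mathbb E|Z^a(r,y)|^p\le C_p\big(\mathbb E|Z^a(r,y)|^2\big)^{p/2}$, and the second moment is computed exactly as in \eqref{conv_stoc}:
\begin{equation*}
\mathbb E|Z^a(r,y)|^2=\sum_{k\in\mathbb Z_0^2}|k|^{-2b}\int_0^r (r-s)^{-2a}e^{-2|k|^2(r-s)}|e_k(y)|^2\,{\rm d}s
=\frac{1}{(2\pi)^2}\sum_{k\in\mathbb Z_0^2}|k|^{-2b}\int_0^r (r-s)^{-2a}e^{-2|k|^2(r-s)}\,{\rm d}s.
\end{equation*}
Substituting $\sigma=|k|^2(r-s)$ gives a factor $|k|^{-2(1-2a)}\Gamma(1-2a)$ up to a bounded remainder, so the sum behaves like $\sum_k |k|^{-2b-2+4a}$, which is finite as soon as $2b+2-4a>2$, i.e. $a<b/2$; since $b>0$ we may always pick $a\in(0,1)\cap(0,b/2)$ (shrinking $a$ costs nothing because $b$ can be small). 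One must simultaneously meet the earlier requirement that $a$ be large enough for the time-integral in the factorization to converge after raising to the power $q/(q-1)$; a short bookkeeping check shows these two constraints are compatible for any $p>2$ by choosing $q$ close to $\infty$ and $a$ in a suitable subinterval of $(0,b/2)$. Taking $\sup$ in $t$ inside before integrating in $r$ (the $r$-integral is over $[0,T]$, independent of $t$) then yields $\mathbb E[\sup_{t}\|z(t,\cdot)\|_{L^p}^p]<\infty$, which is \eqref{reg_z_Lp}.

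For the space-time continuity, the strategy is the same factorization: with $a$ as above, $z(t,x)=c_a\int_0^t(t-r)^{a-1}\big(S(t-r)Z^a(r,\cdot)\big)(x)\,{\rm d}r$, and $Z^a\in L^q(0,T;L^p)$ $\mathbb P$-a.s. by the moment bound just established (for $p$ as large as we like, in particular $p>4$). I would then invoke the deterministic regularizing property of the operator $\varphi\mapsto c_a\int_0^\cdot(\cdot-r)^{a-1}S(\cdot-r)\varphi(r)\,{\rm d}r$: by the very same H\"older argument used for \eqref{continuity} in Lemma \ref{regularization} (now with $g$ in place of $\nabla g$, using \eqref{beta_kernel_0} instead of \eqref{beta1}, hence a milder time singularity), this operator maps $L^q(0,T;L^p)$ into $C([0,T]\times D)$ once $p$ and $a$ are chosen large enough and $q$ large enough; the argument for step functions extended by density is identical to the end of the proof of Lemma \ref{regularization}. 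Alternatively, the continuity already follows from the cited \cite[Theorem 2.13]{DaPrato2004b}, as remarked after \eqref{conv_stoc}, so I would present the factorization argument as the self-contained route and refer to \cite{DaPrato2004b} as confirmation. The only genuinely delicate step is the simultaneous choice of the exponents $(a,p,q)$ so that all the constraints — finiteness of the Gaussian series ($a<b/2$), convergence of the two beta-type time integrals, and $p$ large enough for the Sobolev-type endpoint in the continuity step — hold at once; everything else is a routine repetition of the estimates already proved in Theorem \ref{beta_gradient} and Lemma \ref{regularization}.
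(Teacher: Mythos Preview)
Your proposal is correct and follows essentially the same route as the paper: factorization of $z$, Gaussian moment computation for $Z^a$ via the series expansion as in \eqref{conv_stoc} (yielding the constraint $a<b/2$), and then H\"older in time to control $\sup_t\|z(t,\cdot)\|_{L^p}$ by an $L^q$-in-time norm of $Z^a$. The paper makes the slightly cleaner choice $q=p$, so that $\mathbb E\|Z^a(r,\cdot)\|_{L^p}^p=\int_D\mathbb E|Z^a(r,y)|^p\,{\rm d}y$ follows directly by Fubini without the extra Jensen/H\"older step your decoupled $q$ forces; the resulting constraint $1/p<a<b/2$ then gives the result for $p>2/b$, and the case of smaller $p>2$ follows from the embedding $L^{p_0}\hookrightarrow L^p$ on the bounded domain $D$. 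For the continuity statement the paper simply invokes \cite[Theorem 2.13 and Lemma 2.12]{DaPrato2004b} after checking $Z^\alpha\in L^{2m}([0,T]\times D)$ for $m>1/\alpha$, whereas you propose to rerun the argument of Lemma~\ref{regularization}\textit{(ii)} with $g$ in place of $\nabla g$; both are valid and amount to the same estimate, your version being more self-contained and the paper's being shorter.
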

\begin{proof}
We use the factorization method. Given  $\alpha \in \left(0, \frac12\right)$ we can represent $z$ as
\begin{equation*}
z(t,x)=\frac{\sin (\pi \alpha)}{\pi} \int_0^t (t-\sigma)^{\alpha-1}
\left(\int_D g(t-\sigma,x,z)Z^\alpha(\sigma,z)\, {\rm d}z \right) {\rm d}\sigma
\end{equation*}
with
\begin{align*}
Z^\alpha(\sigma,z):=\int_0^{\sigma}(\sigma-s)^{-\alpha}g(\sigma-s,z,y)\, w({\rm d}y, {\rm d}s).
\end{align*}
From \eqref{conv_stoc} we know that $Z^\alpha(\sigma,z)$
is a zero-mean real gaussian random variable with covariance given by 
\begin{align*}
\mathbb{E}|Z^\alpha(\sigma,z)|^2&= \sum_{k \in \mathbb{Z}^2_0}|k|^{-2b}\int_0^{\sigma}e^{-2|k|^2(\sigma-s)}(\sigma-s)^{-2 \alpha}|e_k(z)|^2\, {\rm d}s 
\\
&\le \frac{2^{2\alpha-1}}{(2 \pi)^2}\Gamma(1-2\alpha) \sum_{k \in \mathbb{Z}^2_0} |k|^{2(2\alpha-1-b)}
\end{align*}
where the Gamma function is finite provided $\alpha \in \left(0, \frac 12\right)$. The
latter  series converges if and only if  $b>2\alpha$.
Therefore, from the gaussianity of $Z^\alpha$, there exists $C_p>0$ such that
\begin{equation*}
\mathbb{E}|Z^\alpha(\sigma,z)|^p \le C_p(\mathbb{E}|Z^\alpha(\sigma,z)|^2)^{\frac p2} < \infty
\end{equation*}
and we have 
\begin{equation}
\label{p}
 \int_0^T\mathbb{E}\|Z^\alpha(\sigma, \cdot)\|^p_{L^p}\, {\rm d}\sigma = \int_0^T\int_D\mathbb{E}|Z^\alpha(\sigma,z)|^p\, {\rm d}z\, {\rm d}\sigma <\infty.
 \end{equation}
 
From Minkowsky's, Young's and H\"older's inequalities we infer that
\[
\begin{split}
\|z(t, \cdot)\|_{L^p} 
&\le \frac{\sin (\pi \alpha)}{\pi} \int_0^t (t-\sigma)^{\alpha-1}
\left\|\int_D g(t-\sigma,\cdot,z)Z^\alpha(\sigma,z)\, {\rm d}z\right\|_{L^p} \, {\rm d}\sigma
\\
&\le \frac{\sin (\pi \alpha)}{\pi} \int_0^t (t-\sigma)^{\alpha-1}\|g(t-\sigma,0,\cdot)\|_{L^1}\|Z^\alpha(\sigma,\cdot)\|_{L^p}\, \, {\rm d}\sigma
\\
&\le \frac{\sin (\pi \alpha)}{\pi} \int_0^t (t-\sigma)^{\alpha-1}\|Z^\alpha(\sigma,\cdot)\|_{L^p} \, {\rm d}\sigma \qquad \text{by \eqref{beta_kernel_0}}
\\
&\le C_{T, \alpha} \left(\int_0^t \|Z^\alpha(\sigma,\cdot)\|^p_{L^p}\, \, {\rm d}\sigma\right)^{\frac1p}
\end{split}
\]
provided $p >\frac{1}{\alpha}$. 
Then
\begin{align*}
\mathbb{E}\left[ \sup_{t \in \left[0,T\right]}\|z(t, \cdot)\|^p_{L^p}\right]
 &\le 
 C_{T, \alpha}\mathbb{E}\left[ \sup_{t \in [0,T]}\int_0^t\|Z^\alpha(\sigma, \cdot)\|^p_{L^p}\, {\rm d}\sigma \right]
\\
&=C_{T,\alpha}\int_0^T\mathbb{E}\|Z^\alpha(\sigma, \cdot)\|^p_{L^p}\ {\rm d}\sigma<\infty
\end{align*}
for any $p>2$. 
 This proves \eqref{reg_z_Lp}.

 As regards the proof of the existence of a space-time continuous modification of $z$ it is similar to \cite[Theorem 2.13]{DaPrato2004b} and it follows from \cite[Lemma 2.12]{DaPrato2004b}. In fact, by the semigroup representation of the heat kernel we can write
 \begin{equation*}
 z(t,x)= \frac{\sin(\pi \alpha)}{\pi}
 \int_0^t (t-\sigma)^{\alpha-1}[S(t-\sigma)Z^\alpha(\sigma, \cdot)](x)\, {\rm d}\sigma, \qquad x \in D, t\in \left[0,T\right].
 \end{equation*}
 Since we are dealing with the heat kernel on a flat torus and we are working under the assumption $b>0$, we are in the framework given by \cite[Hypothesis 2.10]{DaPrato2004b}. Then it is sufficient to prove that $Z^\alpha \in L^{2m}(\left[0,T\right]\times D)$ for $m >\frac1\alpha$. This immediately follows from \eqref{p}.
\end{proof}


\section{Existence and uniqueness of the solution}
\label{exist_uniq_cont}

The main aim of this Section is to prove the existence and uniqueness of the solution to the SPDE \eqref{vort_0} as stated in Theorem \ref{existence_thm}. Since the derivative of $w$ is formal, we consider the equation in a weak sense, as in \cite{Walsh1986} for the stochastic heat equation.
In order to simplify the notation, recalling the relation between the vorticity scalar field $\xi$ and velocity vector field $ v$ given by the Biot-Savart law \eqref{v}, let us define the vector field 
$ q(\xi)=\xi\ (k\ast\xi)$, i.e.
\begin{equation}
\label{q}
[q(\xi)](x)=\xi(x) \int_D k(x-y)\xi(y)\, {\rm d} y.
\end{equation}
By means of H\"older's inequality, from \eqref{L_infty} if $p>2$ we know that 
\begin{equation}
\label{q_bis}
\|q(\xi)\|_{L_p} \le \|\xi\|_{L^p} \|k \ast \xi\|_{L_{\infty}} \le C_p\|\xi\|^2_{L^p}
\end{equation}
namely $q: L^p\to L_p$ for any $p>2$.
This allows to write system \eqref{vort_0}  in an equivalent form, where the velocity does not appear anymore.

Since $v=k\ast\xi$ is divergence free, for the nonlinear term in equation \eqref{vort_0} we have
\[
v\cdot \nabla \xi = \nabla\cdot (v \xi)= \nabla \cdot q(\xi).
\]
Therefore we give this definition of solution to system \eqref{vort_0}. 
This is a weak solution in the sense of PDE's, hence  involving test functions $\varphi$.
\begin{definition}
\label{weak_sol}
We say that an $\dot L^2_{\sharp}$-valued continuous $\mathcal{F}_t$-adapted stochastic process
 $\xi$ is a solution to \eqref{vort_0} if  it solves \eqref{vort_0} in the following sense:
for every $ t \in \left[0,T\right]$, $\varphi \in W^a$ with $a>2$ we have
\begin{multline}
\label{weak}
\int_D\xi(t,x)\varphi(x)\,{\rm d}x - \int_0^t \int_D \xi(s,x) \Delta \varphi(x)\,{\rm d}x\, {\rm d}s
- \int_0^t \int_D \ q(\xi(s, \cdot))(x) \cdot \nabla \varphi(x)\, {\rm d}x\, {\rm d}s
\\
= \int_D\xi_0(x)\varphi(x)\, {\rm d}x + \int_0^t \int_D \varphi(x)\,  w({\rm d}x,{\rm d}s)
\qquad
\end{multline}
$\mathbb{P}$-a.s.
\end{definition}
Notice that the non linear term is well defined since, using repeatedly H\"older's inequality and the Sobolev embedding, we obtain
\[\begin{split}
\left|\int_D q(\xi(s, \cdot))(x) \cdot \nabla \varphi(x)\, {\rm d}x \right| 
&\le  \|\nabla \varphi\|_{L^{\infty}}\|q(\xi(s, \cdot))\|_{L_1} 
\\&
\le   C \|\nabla \varphi\|_{W^s} \| k \ast \xi(s, \cdot)\|_{L_2} \|\xi(s, \cdot)\|_{L^2} 
    \quad \text{ if } s>1
\\
&\le C \|\varphi\|_{W^{s+1}} \|\xi(s, \cdot)\|^2_{L^2} \ \text{ by } \eqref{_Bio_Sav_} \ (\alpha=1, \ \beta=p=2).
\end{split}
\]

Following the idea of  \cite{Walsh1986} for the heat equation or of 
 \cite{Gyongy1998} for the Burgers equation one obtains  that this is equivalent to ask 
that  for any $(t,x)\in [0,T]\times D$ 
\begin{multline}
\label{Walsh}
\xi(t,x)=\int_D g(t,x,y)\xi_0(y)\, {\rm d}y 
+ \int_0^t \int_D \nabla_yg(t-s, x,y) \cdot  q(\xi(s,\cdot))(y) \, {\rm d}y \,{\rm d}s
\\
+
\int_0^t \int_D g(t-s,x,y)\,  w({\rm d}y, {\rm d}s)
\end{multline}
$\mathbb{P}$-a.s. 

The non linear term $q(\xi)$ that appears in \eqref{Walsh} is non Lipschitz.
Therefore, we use a localization argument
to prove the existence and uniqueness of the solution. By means of a fixed point argument we prove at first the existence and uniqueness result for a local solution; then the global result follows from suitable estimates on the process $\xi$.

So, we first solve the problem when the nonlinearity is truncated to be globally Lipschitz.

\subsection{The case of truncated nonlinearity}
Let $N\ge 1$ and denote by $\Theta_N: \left[0,+\infty\right) \rightarrow [0,1]$ a $C^1$ function such that 
$|\Theta_N^\prime(s)|\le 2$ for any $s\ge 0$ and 
\begin{equation}
\label{truncation_factor}
\Theta_N(s)=
\begin{cases}
1 \qquad \text{if} \ 0\le s < N
\\
0 \qquad  \text{if} \ s \ge N+1
\end{cases}
\end{equation}
Given $\xi \in L^p$, for $p>2$, we define 
\begin{equation}
\label{q_N}
q_N(\xi)= q(\xi)  \Theta_N(\|\xi\|_{L^p}),
\end{equation}
\begin{equation}
\label{q_tilde}
\tilde q_N(\xi)= q(\xi)  \Theta_N^{\prime}(\|\xi\|_{L^p}).
\end{equation}
By \eqref{q_bis} we know that $q_N, \tilde q_N: L^p\to L_p$ for any $p>2$. In addition we have
\begin{lemma}
\label{Lipschitz}
Fix $N\ge 1$ and $p>2$. Then 
there exist positive constants $C_p$ and $L_{N,p}$ such that
\begin{equation}
\label{stima_q}
\| q_N(\xi)\|_{L_p}  \le C_p(N+1)^2 \qquad \forall \xi \in L^p,
\end{equation}
\begin{equation}
\label{stima_q_2}
\| \tilde q_N(\xi)\|_{L_p}  \le C_p(N+1)^2 \qquad \forall \xi \in L^p
\end{equation}
and 
\begin{equation}\label{qlip}
\| q_N(\xi)-  q_N(\eta)\|_{L_p} \le L_{N,p}\|\xi-\eta\|_{L^p} \qquad \forall \xi, \eta \in L^p.
\end{equation}
\end{lemma}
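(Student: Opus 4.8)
The plan is to treat the three estimates separately, with the first two being essentially immediate and the Lipschitz bound requiring the bulk of the work. For \eqref{stima_q}, recall that $q_N(\xi)=q(\xi)\Theta_N(\|\xi\|_{L^p})$. Since $\Theta_N$ is supported in $[0,N+1]$, the factor $\Theta_N(\|\xi\|_{L^p})$ vanishes unless $\|\xi\|_{L^p}\le N+1$, and in that range $0\le\Theta_N\le 1$. Hence by \eqref{q_bis},
\[
\|q_N(\xi)\|_{L_p}\le \|q(\xi)\|_{L_p}\,\Theta_N(\|\xi\|_{L^p})\le C_p\|\xi\|_{L^p}^2\,\mathbf{1}_{\{\|\xi\|_{L^p}\le N+1\}}\le C_p(N+1)^2.
\]
The estimate \eqref{stima_q_2} is identical, using $|\Theta_N'|\le 2$ in place of $\Theta_N\le 1$, which only costs an extra factor $2$ absorbed into $C_p$; again the derivative is supported where $\|\xi\|_{L^p}\le N+1$ (indeed on $[N,N+1]$), so the same bound holds.

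For the Lipschitz estimate \eqref{qlip} I would first establish a local Lipschitz bound for the untruncated map $q$ on $L^p$-balls. Writing $q(\xi)-q(\eta)=\xi\,(k\ast\xi)-\eta\,(k\ast\eta)=(\xi-\eta)(k\ast\xi)+\eta\,(k\ast(\xi-\eta))$, one applies H\"older's inequality and \eqref{L_infty} to each term to get
\[
\|q(\xi)-q(\eta)\|_{L_p}\le \|\xi-\eta\|_{L^p}\|k\ast\xi\|_{L_\infty}+\|\eta\|_{L^p}\|k\ast(\xi-\eta)\|_{L_\infty}\le C_p\big(\|\xi\|_{L^p}+\|\eta\|_{L^p}\big)\|\xi-\eta\|_{L^p}.
\]
Next, add and subtract a cross term to split
\[
q_N(\xi)-q_N(\eta)=\big(q(\xi)-q(\eta)\big)\Theta_N(\|\xi\|_{L^p})+q(\eta)\big(\Theta_N(\|\xi\|_{L^p})-\Theta_N(\|\eta\|_{L^p})\big).
\]
The first piece is controlled by the local Lipschitz bound together with the observation that $\Theta_N(\|\xi\|_{L^p})\ne 0$ forces $\|\xi\|_{L^p}\le N+1$; however, one must be slightly careful because the other norm $\|\eta\|_{L^p}$ is not a priori bounded. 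The standard fix is to distinguish cases: if $\|\eta\|_{L^p}\le N+1$ as well, the local Lipschitz bound gives a constant depending on $N,p$; if $\|\eta\|_{L^p}>N+1$ then also $\Theta_N(\|\eta\|_{L^p})=0$, so $q_N(\eta)=0$ and one rewrites the difference using only $\xi$, noting that $\|\xi-\eta\|_{L^p}\ge \|\eta\|_{L^p}-\|\xi\|_{L^p}>0$, which lets one reabsorb the (bounded by $C_p(N+1)^2$ from \eqref{stima_q}) quantity $\|q_N(\xi)\|_{L_p}$ against $\|\xi-\eta\|_{L^p}$. For the second piece, use that $\Theta_N$ is $C^1$ with $|\Theta_N'|\le 2$, hence globally Lipschitz with constant $2$, so $|\Theta_N(\|\xi\|_{L^p})-\Theta_N(\|\eta\|_{L^p})|\le 2\big|\|\xi\|_{L^p}-\|\eta\|_{L^p}\big|\le 2\|\xi-\eta\|_{L^p}$; combined with \eqref{q_bis} applied on the set $\{\|\eta\|_{L^p}\le N+1\}$ (outside of which $q(\eta)$ is multiplied by a difference of $\Theta_N$'s that both vanish) this yields $\|q(\eta)(\Theta_N(\|\xi\|_{L^p})-\Theta_N(\|\eta\|_{L^p}))\|_{L_p}\le 2C_p(N+1)^2\|\xi-\eta\|_{L^p}$. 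Collecting everything gives \eqref{qlip} with $L_{N,p}$ depending only on $N$ and $p$.

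The main obstacle, and the only genuinely delicate point, is the bookkeeping in the first piece of the splitting when exactly one of $\|\xi\|_{L^p},\|\eta\|_{L^p}$ exceeds $N+1$: one cannot naively invoke a Lipschitz constant involving an unbounded norm, and the case analysis above (exploiting that $q_N$ vanishes on the large-norm region and that the two norms are then far apart) is what makes the argument go through. Everything else is routine application of H\"older's inequality, the Biot--Savart estimate \eqref{L_infty}, and the growth bound \eqref{q_bis}.
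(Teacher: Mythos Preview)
Your approach is correct in spirit but differs from the paper's, and one step in your case analysis is not quite written correctly.

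\textbf{Comparison with the paper.} The paper proves \eqref{qlip} via the mean value theorem: it shows that $q_N$ is G\^ateaux differentiable from $L^p$ to $L_p$, computes
\[
D_h q_N(\xi)= q(\xi)\,\Theta_N'(\|\xi\|_{L^p})\,\|\xi\|_{L^p}^{1-p}\langle \xi|\xi|^{p-2},h\rangle + \Theta_N(\|\xi\|_{L^p})\bigl(h(k\ast\xi)+\xi(k\ast h)\bigr),
\]
and bounds $\sup_\xi\|D q_N(\xi)\|_{\mathcal L(L^p;L_p)}\le C_p(N+1)^2+2C_p(N+1)$ using \eqref{L_infty} and \eqref{stima_q_2}. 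This avoids any case distinction. Your route via the algebraic splitting
$q_N(\xi)-q_N(\eta)=(q(\xi)-q(\eta))\Theta_N(\|\xi\|_{L^p})+q(\eta)\bigl(\Theta_N(\|\xi\|_{L^p})-\Theta_N(\|\eta\|_{L^p})\bigr)$
is equally standard and more elementary (it only uses that $\Theta_N$ is Lipschitz, not $C^1$), at the cost of some bookkeeping.

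\textbf{The gap.} In your Case ``$\|\eta\|_{L^p}>N+1$'' you write that $\|\xi-\eta\|_{L^p}\ge \|\eta\|_{L^p}-\|\xi\|_{L^p}>0$ lets you reabsorb $\|q_N(\xi)\|_{L_p}\le C_p(N+1)^2$. Merely ``$>0$'' is not enough: both norms can sit arbitrarily close to $N+1$ on opposite sides, making $\|\xi-\eta\|_{L^p}$ as small as you like while $C_p(N+1)^2$ stays fixed. Similarly, your parenthetical about the second piece (``a difference of $\Theta_N$'s that both vanish'') is wrong when $\|\eta\|_{L^p}>N+1$ but $\|\xi\|_{L^p}\le N+1$. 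The clean fix is: when $\|\eta\|_{L^p}>N+1$ and $\|\xi\|_{L^p}\le N+1$, write
\[
\|q_N(\xi)\|_{L_p}=\Theta_N(\|\xi\|_{L^p})\,\|q(\xi)\|_{L_p}
=\bigl(\Theta_N(\|\xi\|_{L^p})-\Theta_N(\|\eta\|_{L^p})\bigr)\|q(\xi)\|_{L_p}
\le 2\,\|\xi-\eta\|_{L^p}\,C_p(N+1)^2,
\]
using again that $\Theta_N$ is $2$-Lipschitz. Alternatively, assume without loss of generality $\|\eta\|_{L^p}\le\|\xi\|_{L^p}$; then $\|\eta\|_{L^p}\le N+1$ whenever either piece is nonzero, and your original estimates go through directly. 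With either patch your argument is complete.
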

\begin{proof}
The global bounds comes from \eqref{q_bis}:
\[
\| q_N(\xi)\|_{L_p} 
\le  C_p\|\xi\|^2_{L^p}\Theta_N(\|\xi\|_{L^p}) 
\le C_p(N+1)^2,
\]

\[
\| \tilde q_N(\xi)\|_{L_p} 
\le  C_p\|\xi\|^2_{L^p}|\Theta^{\prime}_N(\|\xi\|_{L^p}) |
\le C_p(N+1)^2.
\]

Let us now show that $ q_N$ is a Lipschitz continuous function. The idea is to use the mean value theorem: we show that $q_N$ is G\^ateaux differentiable in any point of $L^p$ and 
its derivative is bounded. The result will follow  by
\begin{equation}\label{meanv}
\| q_N(\xi)-  q_N(\eta)\|_{L_p} 
\le\sup_{t \in \left[0,1\right]}\|D q_N(t \xi+(1-t)\eta)\|_{\mathcal{L}(L^p;L_p)}\|\xi-\eta\|_{L^p}
\end{equation}
where
$D q_N(\xi):h \rightarrow D_h  q_N(\xi)$ is a linear and bounded operator from $L^p$ into $L_p$ defined as
\begin{equation}
\label{defDh}
D_h q_N(\xi):= \lim_{\varepsilon \rightarrow 0}\frac{ q_N(\xi + \varepsilon h)-  q_N(\xi)}{\varepsilon}
\end{equation}
and 
\begin{equation*}
\|D q_N(\xi)\|_{\mathcal{L}(L^p;L_p)} = \sup_{\|h\|_{L^p}\le 1}\|D_h  q_N(\xi)\|_{L_p}.
\end{equation*}
By \eqref{defDh} we have
\begin{equation*}
D_h q_N(\xi)= q(\xi) \ D_h \Theta_N(\|\xi\|_{L^p}) + h \ ( k \ast \xi)
\Theta_N(\|\xi\|_{L^p}) +\xi \ ( k \ast h) \Theta_N(\|\xi\|_{L^p}) .
\end{equation*}
Since  $D_h(\|\xi\|_{L^p})= \|\xi\|^{1-p}_{L^p}\langle\xi |\xi|^{p-2},h\rangle$ we get
\[
D_h \Theta_N(\|\xi\|_{L^p}) = \Theta'_N(\|\xi\|_{L^p})  \|\xi\|^{1-p}_{L^p}\langle\xi |\xi|^{p-2},h\rangle.
\]
Therefore, bearing in mind \eqref{L_infty} 
and \eqref{stima_q_2} we infer that
\[
\begin{split}
\| D_h & q_N(\xi)\|_{L_p} 
\\&\le |\Theta^\prime_N(\|\xi\|_{L^p})| \|\xi\|_{L^p}^{1-p} |\langle \xi |\xi|^{p-2},h\rangle| \  \|q(\xi)\|_{L_p}
\\
& \qquad+| \Theta_N(\|\xi\|_{L^p}) | \|h( k \ast \xi)\|_{L_p}
+ |\Theta_N(\|\xi\|_{L^p})| \|\xi( k \ast h)\|_{L_p}
\\&\le
 |\Theta^\prime_N(\|\xi\|_{L^p})| \|h\|_{L^p} \|q(\xi)\|_{L_p}
 \\
&\qquad+ |\Theta_N(\|\xi\|_{L^p}) | \|h \|_{L^p} \|k \ast \xi\|_{L_\infty}
 +| \Theta_N(\|\xi\|_{L^p})|  \|\xi \|_{L^p} \|k \ast h\|_{L_\infty}
 \\&\le
 \|h\|_{L^p}\|\tilde q_N(\xi)\|_{L_p}
+ 2C_p |\Theta_N(\|\xi\|_{L^p}) | \|h \|_{L^p} \|\xi\|_{L^p}
\\&\le
 C_p (N+1)^2\|h \|_{L^p}+2 C_p(N+1)\|h \|_{L^p} .
\end{split}
\]
Hence we get
\[
\sup_\xi \|D q_N(\xi)\|_{\mathcal{L}(L^p;L_p)}\le  C_p (N+1)^2+2 C_p(N+1) .
\]
Thanks to \eqref{meanv} this proves \eqref{qlip}.
\end{proof}

We aim at proving the existence and uniqueness of the solution to the smoothed version of system \eqref{vort_0} that is
\[
\begin{cases}
\displaystyle  \frac{\partial\xi_N}{\partial t}(t,x)- \Delta \xi_N(t,x)+v_N(t,x) \cdot \nabla \xi_N(t,x) \Theta_N(\|\xi_N(t,\cdot)\|_{L^p})
       = w({\rm d}x,{\rm d}t)
       \\ 
\nabla \cdot  v_N(t,x)=0 
\\ 
\xi_N(t,x) = \nabla^{\perp} \cdot v_N(t,x)
\\
\displaystyle \xi_N(0,x)=\xi_0(x) 
\end{cases}
\]
Thanks to \eqref{q} and \eqref{q_N} this can be written  in the  Walsh formulation as
\begin{multline}
\label{Walsh_truncated}
\xi_N(t,x)=\int_D g(t,x,y)\xi_0(y)\, {\rm d}y 
+ \int_0^t \int_D \nabla_yg(t-s, x,y) \cdot q_N(\xi_N(s,\cdot))(y) \, {\rm d}y \,{\rm d}s
\\
+
\int_0^t \int_D g(t-s,x,y)\,  w({\rm d}y, {\rm d}s).\qquad
\end{multline}
We have the following result.
\begin{proposition}
\label{result_truncated}
Let $N\ge 1$, $b>0$ in \eqref{delta} and 
$p>2$. If $\xi_0 \in L^p$, then  there exists a unique solution 
$\xi_N$ to equation \eqref{Walsh_truncated} which is  an $\mathcal{F}_t$-adapted process whose paths belong to
$C([0,T];L^p)$, $\mathbb P$-a.s.
\end{proposition}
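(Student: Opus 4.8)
The plan is to rewrite the truncated Walsh equation \eqref{Walsh_truncated} as a fixed point problem and solve it by the Banach contraction principle on the space $\mathcal{X}$ of $\mathcal{F}_t$-adapted processes whose paths lie in $C([0,T];L^p)$, equipped with a suitable norm. Concretely, for $\eta\in\mathcal{X}$ set
\[
\mathcal{T}(\eta)(t,x):=\int_D g(t,x,y)\xi_0(y)\,{\rm d}y+\bigl(J(q_N(\eta))\bigr)(t,x)+z(t,x),
\]
where $z$ is the stochastic convolution \eqref{reg_z} and $J$ is the operator \eqref{defJfi} applied to the time-dependent vector field $s\mapsto q_N(\eta(s,\cdot))$. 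A process $\xi_N\in\mathcal{X}$ solves \eqref{Walsh_truncated} if and only if $\xi_N=\mathcal{T}(\xi_N)$.

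First I would check that $\mathcal{T}$ maps $\mathcal{X}$ into itself. The first summand equals $[S(t)\xi_0](x)$ and lies in $C([0,T];L^p)$ by strong continuity of the heat semigroup on $L^p$ together with $\xi_0\in L^p$; the last summand $z$ lies in $\mathcal{X}$, $\mathbb{P}$-a.s., by Lemma \ref{regol_conv}. For the middle term, recall from Lemma \ref{Lipschitz} that $q_N\colon L^p\to L_p$ is bounded and globally Lipschitz, so $s\mapsto q_N(\eta(s,\cdot))$ is bounded (hence in $L^\gamma(0,T;L_p)$ for every $\gamma$). Applying Lemma \ref{regularization}~i) with the admissible choice $\alpha=p$, $\beta=1$ and any $\gamma>2$ (so that $\tfrac1\beta=1+\tfrac1p-\tfrac1\alpha$ and $\gamma>\tfrac{2\beta}{2-\beta}$) gives $J(q_N(\eta))\in L^\infty(0,T;L^p)$, with the quantitative bounds \eqref{stima Lp0}--\eqref{stima Lp}. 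Time-continuity in $L^p$ of $t\mapsto J(q_N(\eta))(t,\cdot)$ follows from a standard increment estimate for $J$ (splitting the time integral at the two endpoints and using \eqref{beta1}; only the $L^\infty$-in-time bound of $q_N(\eta)$ is needed, not its continuity), or, equivalently, by the density argument used at the end of the proof of Lemma \ref{regularization}. Finally $\mathcal{T}$ preserves adaptedness, since $J(q_N(\eta))(t)$ depends only on $\{\eta(s):s\le t\}$ and $z$ is adapted. Hence $\mathcal{T}(\mathcal{X})\subseteq\mathcal{X}$.

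Next I would establish the contraction estimate. Since $q_N(\eta_1)-q_N(\eta_2)$ is bounded in $L_p$, estimate \eqref{stima Lp0} (with $\alpha=p$, $\beta=1$, so the kernel exponent is $\tfrac1\beta-\tfrac32=-\tfrac12$) combined with the Lipschitz bound \eqref{qlip} yields
\[
\|\mathcal{T}(\eta_1)(t,\cdot)-\mathcal{T}(\eta_2)(t,\cdot)\|_{L^p}\le C_\beta L_{N,p}\int_0^t(t-s)^{-\frac12}\|\eta_1(s,\cdot)-\eta_2(s,\cdot)\|_{L^p}\,{\rm d}s .
\]
Introducing the Bielecki-type weighted norm $\|\eta\|_\lambda:=\sup_{0\le t\le T}e^{-\lambda t}\|\eta(t,\cdot)\|_{L^p}$, which is equivalent to the norm of $C([0,T];L^p)$, and using $\int_0^\infty u^{-1/2}e^{-\lambda u}\,{\rm d}u=\sqrt{\pi/\lambda}$, one gets
\[
\|\mathcal{T}(\eta_1)-\mathcal{T}(\eta_2)\|_\lambda\le C_\beta L_{N,p}\sqrt{\pi/\lambda}\,\|\eta_1-\eta_2\|_\lambda ,
\]
which is a strict contraction once $\lambda>\pi C_\beta^2 L_{N,p}^2$. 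By the Banach fixed point theorem, $\mathcal{T}$ has a unique fixed point $\xi_N$ in $\mathcal{X}$, which is the unique solution of \eqref{Walsh_truncated} in that class. (Alternatively one may avoid the weighted norm and show that a sufficiently high iterate $\mathcal{T}^n$ is a contraction in the plain norm, the iterated half-order singular kernel producing a factor $t^{n/2}/\Gamma(\tfrac n2+1)\to0$.)

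The fixed point argument itself is routine once Lemmas \ref{regularization}, \ref{regol_conv} and \ref{Lipschitz} are available; the point that requires a little extra care is the $L^p$-time-continuity of the deterministic convolution term $J(q_N(\eta))$ in the whole range $p>2$, since Lemma \ref{regularization}~ii) yields genuine space-time continuity only for $p>4$ and therefore for $2<p\le4$ this continuity has to be obtained directly from the increment estimate sketched above. Keeping track of the adaptedness of the iterates and of the $\mathbb{P}$-a.s.\ qualifiers (the single exceptional null set being inherited from $z$) is the only other bookkeeping needed.
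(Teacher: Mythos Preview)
Your argument is correct and follows the same overall scheme as the paper: both set up the truncated equation as a fixed point for the map $\eta\mapsto S(t)\xi_0+J(q_N(\eta))+z$, use Lemma~\ref{regularization}\,i) with $\alpha=p$, $\beta=1$ to get the key bound $\|J(q_N(\eta_1))-J(q_N(\eta_2))\|_{L^p}\le C\int_0^t(t-s)^{-1/2}\|\eta_1-\eta_2\|_{L^p}\,{\rm d}s$, and combine it with the Lipschitz estimate of Lemma~\ref{Lipschitz}.

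The only differences are in the execution of the contraction step and in the handling of time continuity. The paper obtains a contraction constant $C_{N,p}T^{1/2}$ in the plain sup norm, takes $T$ (or a subinterval $\tilde t$) small enough and then concatenates; you instead introduce a Bielecki weight $e^{-\lambda t}$ (or iterate the map) to get a contraction directly on $[0,T]$. Both are standard and equivalent devices. On the second point, the paper runs the fixed point in the space $\mathcal{B}$ of bounded $L^p$-valued adapted processes without building time continuity into the norm, whereas you work in $C([0,T];L^p)$ and therefore have to argue separately that $t\mapsto J(q_N(\eta))(t,\cdot)$ is continuous in $L^p$ for all $p>2$ (since Lemma~\ref{regularization}\,ii) only gives this for $p>4$). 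Your increment/density argument for this is fine and in fact supplies a detail the paper's proof leaves implicit.
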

\begin{proof}
Since we are dealing with an additive noise, i.e. a noise which is independent of the unknown process $\xi$, we can work pathwise.
 In order to prove the existence and uniqueness result we appeal to the contraction principle. 
Let $\mathcal{B}$ denote the space of all $L^p$-valued $\mathcal{F}_t$-adapted stochastic processes $\eta(t, \cdot)$, $t \in \left[0,T \right]$ such that the norm 
\begin{equation*}
\|\eta\|_{\mathcal{B}}:= \sup_{t \in \left[0,T\right]}\|\eta(t, \cdot)\|_{L^p} 
\end{equation*}
is finite $\mathbb{P}$-a.s.

Define the operator $\mathcal{M}$ on $\mathcal{B}$ by
\begin{equation*}
\mathcal{M}(\xi_N)(t,x):= M_0(t,x)+(J q_N(\xi_N))(t,x)+z(t,x),
\end{equation*}
where
\begin{equation*}
M_0(t,x):= \int_Dg(t,x,y)\xi_0(y)\, {\rm d}y,
\end{equation*}
and the other two terms are given respectively by \eqref{defJfi} and \eqref{reg_z}. More precisely,
\begin{equation*}
(J q_N(\xi_N))(t,x)=\int_0^t \int_D\nabla_yg(t-s,x,y) \cdot q_N(\xi_N(s, \cdot))(y)\, {\rm d}y \,{\rm d}s.
\end{equation*}
Then
\begin{equation*}
\|\mathcal{M}(\xi_N)(t, \cdot)\|_{L^p}^p \le C_p\left(\|M_0(t, \cdot)\|^p_{L^p}+\|(Jq_N(\xi_N))(t, \cdot)\|^p_{L^p}+\|z(t, \cdot)\|^p_{L^p}\right).
\end{equation*}
Using Young's inequality and \eqref{beta_kernel_0}, we infer that
\begin{align*}
\|M_0\|_{\mathcal{B}}= \sup_{t \in \left[0,T\right]}  \|M_0(t, \cdot)\|_{L^p}
\le  \sup_{t \in \left[0,T\right]} \left(\|g(t,0, \cdot)\|_{L^1}\ \|\xi_0\|_{L^p} \right) < \infty.
\end{align*}
By estimates \eqref{stima Lp0} (with $\beta=1$, $\alpha=p$) and \eqref{stima_q} 
we get
\begin{equation*}
\|(Jq_N(\xi_N))(t,\cdot)\|_{L^p} 
\le  \int_0^t (t-s)^{-\frac12}\| q_N(\xi_N(s, \cdot))\|_{L_p} \, {\rm d}s
\le C_p  (N+1)^{2} t^{\frac 12}
\end{equation*}
and so $\|Jq_N(\xi_N)\|_{\mathcal{B}}<\infty$.
Finally, $\|z\|_{\mathcal{B}}< \infty$ $\mathbb{P}$-a.s. by Lemma \ref{regol_conv}.
Thus $\mathcal{M}$ is an operator mapping the Banach space $\mathcal{B}$ into itself. 
It remains to prove that $\mathcal{M}$ is a contraction. 
From \eqref{stima Lp0} with $\alpha=p$, $\beta=1$ and the Lipschitz result of Lemma \ref{Lipschitz}, 
we infer that
\[
\begin{split}
\|\mathcal{M}(\xi_N^1)(t, \cdot)-\mathcal{M}&(\xi_N^2)(t, \cdot)\|_{L^p} 
\le
C \int_0^t (t-s)^{-\frac12}\|q_N(\xi_N^1(s,\cdot))-q_N(\xi_N^2(s,\cdot))\|_{L_p}ds
\\&
  \le 
  C L_{N,p} \int_0^t (t-s)^{-\frac12}\|\xi_N^1(s, \cdot)-\xi_N^2(s, \cdot)\|_{L^p}\, {\rm d}s
\\&
\le
C L_{N,p} \left(\sup_{t \in \left[0,T\right]}\|\xi_N^1(t, \cdot)-\xi_N^2(t, \cdot)\|_{L^p} \right)
\int_0^t (t-s)^{-\frac12} {\rm d}s
\\&\le 
C_{N,p} T^{\frac 12} \|\xi_N^1-\xi_N^2\|_{\mathcal{B}}
\end{split}
\]
for every $t \in \left[0,T \right]$. If $T$ satisfies $C_{N,p}T^{\frac 12}< 1$, 
then $\mathcal{M}$ is a contraction on $\mathcal{B}$. Hence the operator $\mathcal{M}$ admits a unique fixed point in the set $\{\xi \in \mathcal{B}: \xi(0,\cdot)=\xi_0\}$. 
Otherwise we choose $\tilde t>0$ such that  $C_{N,p}\tilde t^{\frac 12}< 1$ and we conclude the existence of a unique solution on the time interval $[0,\tilde t]$.
Since $C_{N,p}$ does not depend on $\xi_0$, by a standard argument we construct a 
unique solution $\xi$ to the SPDE \eqref{Walsh_truncated} by concatenation on 
every interval of lenght $\tilde t$ until we recover the time interval $\left[0,T\right]$.
\end{proof}

In the following subsection we shall see that Proposition \ref{result_truncated} provides uniqueness and local existence for the solution in Theorem \ref{existence_thm}. To gain the global existence we need a uniform estimate as proved in the following lemma, inspired by \cite{Gyongy1998} and \cite{GyongyNualart1999}.

Let $z$ be the process defined in \eqref{reg_z} and $\xi_N$ be the solution to equation \eqref{Walsh_truncated}. Let us set $\beta_N=\xi_N-z$. Since the noise is independent on the unknown, $\beta_N$ satisfies the equation
\begin{equation}
\label{eq_truncated_2}
\frac{\partial}{\partial t}\beta_N 
=\Delta \beta_N - \nabla \cdot  q_N(\beta_N+z)
\end{equation}
which can be written \`a la Walsh as 
\begin{multline}
\label{Walsh_truncated_2}
\beta_N(t,x)= \int_Dg(t,x,y)\xi_0(y)\, {\rm d}y 
\\
+ \int_0^t \int_D\nabla_yg(t-s,x,y) \cdot  q_N(\beta_N(s, \cdot)+ z(s, \cdot))(y)\, {\rm d}y \, {\rm d}s.
\end{multline}
The following result provides a uniform estimate for $\beta_N$. 
Notice that we shall work pathwise since the noise is additive.

\begin{lemma}
\label{stima_limi}
Let $b>0$ in \eqref{delta} and $p>2$. 
If $\xi_0 \in L^p$ then 
\begin{align*}
\sup_{N \ge 1} \sup_{t \in \left[0,T\right]}\|\beta_N(t, \cdot)\|^p_{L^p} \le \left[\|\xi_0\|^p_{L^p}+C_1(z) \right]e^{C_2(z)}
\end{align*}
where $C_1(z)$ and $C_2(z)$ are given by
\begin{align*}
C_1(z)=C_pT \sup_{t \in \left[0, T\right]}\|z(t, \cdot)\|^{2p}_{L^p}
\end{align*}
and
\begin{align*}
C_2(z)=C_pT\left(1+ \sup_{t \in \left[0,T\right]}\|z(t, \cdot)\|^2_{L^p}\right)
\end{align*}
for some positive constant $C_p$.
\end{lemma}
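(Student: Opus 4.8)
The plan is to derive a closed differential (or integral) inequality for the quantity $N(t):=\|\beta_N(t,\cdot)\|_{L^p}^p$ that does not depend on the truncation level $N$, and then to close it by Gronwall's lemma. Working pathwise (the noise being additive, $z$ is a fixed continuous function on $[0,T]\times D$ by Lemma \ref{regol_conv}), I would start from the equation \eqref{eq_truncated_2}, namely $\partial_t\beta_N=\Delta\beta_N-\nabla\cdot q_N(\beta_N+z)$, and test it against $\beta_N|\beta_N|^{p-2}$. This is the standard energy estimate in $L^p$: the time derivative produces $\frac1p\frac{d}{dt}\|\beta_N\|_{L^p}^p$, the Laplacian term $\int_D \Delta\beta_N\,\beta_N|\beta_N|^{p-2}\,{\rm d}x=-(p-1)\int_D |\nabla\beta_N|^2|\beta_N|^{p-2}\,{\rm d}x\le 0$ and so can be dropped (or kept as a good term), and the nonlinear term, after integrating by parts, becomes $(p-1)\int_D q_N(\beta_N+z)\cdot\nabla\beta_N\,|\beta_N|^{p-2}\,{\rm d}x$.

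The heart of the matter is to bound this nonlinear contribution. Writing $\xi_N=\beta_N+z$ and recalling $q_N(\xi_N)=\xi_N\,(k\ast\xi_N)\,\Theta_N(\|\xi_N\|_{L^p})$, and using that the velocity $u:=k\ast\xi_N$ is divergence free, the crucial cancellation is the one familiar from deterministic 2D Euler/Navier--Stokes in vorticity form: the ``worst'' piece $\int_D (\beta_N\, u\cdot\nabla\beta_N)\,|\beta_N|^{p-2}\,{\rm d}x=\frac1p\int_D u\cdot\nabla(|\beta_N|^p)\,{\rm d}x=0$ by the divergence-free condition. What remains are only the cross terms involving $z$, schematically $\int_D (z\, u\cdot\nabla\beta_N)\,|\beta_N|^{p-2}\,{\rm d}x$, which after a further integration by parts and using $\nabla\cdot u=0$ can be rearranged so that all derivatives fall away in a controlled manner; these are then estimated by H\"older's inequality together with the key bound \eqref{L_infty}, $\|u\|_{L_\infty}=\|k\ast\xi_N\|_{L_\infty}\le C_p\|\xi_N\|_{L^p}\le C_p(\|\beta_N\|_{L^p}+\|z\|_{L^p})$, and the truncation factor $\Theta_N$ which I expect to need only to the extent that it keeps things finite (but crucially the final constant must be $N$-independent, so one should avoid invoking $\|\xi_N\|_{L^p}\le N+1$; instead keep $\|\xi_N\|_{L^p}$ explicitly and absorb it). After Young's inequality one arrives at
\begin{equation*}
\frac{d}{dt}\|\beta_N(t,\cdot)\|_{L^p}^p \le C_p\Big(1+\|z(t,\cdot)\|_{L^p}^2\Big)\|\beta_N(t,\cdot)\|_{L^p}^p + C_p\|z(t,\cdot)\|_{L^p}^{2p},
\end{equation*}
possibly after also using the good dissipative term $(p-1)\int_D|\nabla\beta_N|^2|\beta_N|^{p-2}\,{\rm d}x$ (equivalently $\|\,|\beta_N|^{p/2}\|_{H^1}^2$ up to constants) to absorb any term with a gradient of $\beta_N$ that cannot be removed by cancellation.

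Integrating this inequality in time and applying Gronwall's lemma gives, for every $t\in[0,T]$,
\begin{equation*}
\|\beta_N(t,\cdot)\|_{L^p}^p \le \Big(\|\xi_0\|_{L^p}^p + C_pT\sup_{[0,T]}\|z\|_{L^p}^{2p}\Big)\exp\!\Big(C_pT\big(1+\sup_{[0,T]}\|z\|_{L^p}^2\big)\Big),
\end{equation*}
which is exactly the asserted bound with $C_1(z)$ and $C_2(z)$ as stated; since the right-hand side does not involve $N$, taking $\sup_{N\ge1}\sup_{t\in[0,T]}$ yields the claim. A technical point worth flagging: the computation above is formal because $\beta_N$ need not be smooth enough to differentiate pointwise; the rigorous route is to run the same argument on the mild/Walsh formulation \eqref{Walsh_truncated_2} using the regularization estimates of Lemma \ref{regularization} (e.g. \eqref{stima Lp0}), or to mollify the equation, derive the inequality for the smoothed solutions, and pass to the limit. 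The main obstacle is precisely making the divergence-free cancellation rigorous at low regularity and checking that every remaining term can be bounded using only $\|\beta_N\|_{L^p}$, $\|z\|_{L^p}$ (and the dissipation), so that the final constant is genuinely independent of the truncation parameter $N$.
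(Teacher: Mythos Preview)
Your proposal is correct and follows essentially the same route as the paper: test \eqref{eq_truncated_2} against $|\beta_N|^{p-2}\beta_N$, exploit the divergence-free cancellation $\int_D (u\cdot\nabla\beta_N)\,\beta_N|\beta_N|^{p-2}\,{\rm d}x=0$ to eliminate the quadratic-in-$\beta_N$ piece, estimate the remaining cross term $\int_D (z\,u\cdot\nabla\beta_N)|\beta_N|^{p-2}\,{\rm d}x$ via H\"older, \eqref{L_infty} and Young (absorbing $\||\beta_N|^{\frac{p-2}{2}}\nabla\beta_N\|_{L^2}^2$ into the dissipation rather than integrating by parts again), and close with Gronwall. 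The only point where the paper differs from your suggestion is the rigorous justification: rather than mollifying, it observes that since $q_N(\xi_N)\in L^2(0,T;L_2)$ by \eqref{stima_q}, classical parabolic regularity (Lions--Magenes) gives $\beta_N\in L^2(0,T;W^1)$ directly, so $\nabla\beta_N$ exists and the energy computation is legitimate as written.
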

\begin{proof}
As done before, we can show that a solution to \eqref{Walsh_truncated_2} is a weak solution to the PDE \eqref{eq_truncated_2} with initial condition $\beta_N(0,x)=\xi_0(x)$. 

We consider the time evolution of the  $L^p$-norm of $\beta_N(t,\cdot)$. 
Let us recall that when $b>0$ in \eqref{delta}, $z$ admits a modification with $\mathbb{P}$-a.s. space-time continuous trajectories; 
moreover from Proposition \ref{result_truncated} we know that $\xi_N \in C([0,T];L^p)$ $\mathbb{P}$-a.s. Hence, for sure, the solution 
$\beta_N \in C([0,T];L^p)$ $\mathbb{P}$-a.s. for every $N\ge 1$. Actually, since the noise term has disappeared, $\beta_N$ is more regular than $\xi_N$ and $z$. 
Indeed, $\frac{\partial \beta_N}{\partial t}-\Delta \beta_N=-\nabla \cdot q_N(\xi_N)$ where $q_N(\xi_N)$ belongs at least 
to $L^2(0,T;L_2)$ thanks to \eqref{stima_q}. Hence, according to a classical regularity result for parabolic equations 
(see e.g. \cite[Chapter 4.4, Theorem 4.1]{Lions1972})
we have that  $\beta_N \in L^2(0,T;W^1)$; hence $\nabla \beta_N$ exists.
We use this fact in the following computations. Only at the end we will obtain an estimate involving $\nabla \beta_N$ which shows 
its regularity.
This is a short way to prove our result. Otherwise one has to use Galerkin approximations and then pass to the limit.
 
From \eqref{eq_truncated_2} we infer that
\[
\begin{split}
\label{normaLp}
 \frac{d}{dt} \|\beta_N(t,\cdot)\|^p_{L^p}
 &=p \int_D |\beta_N(t,x)|^{p-2}\beta_N(t,x) \frac{\partial }{\partial t} \beta_N(t,x) \ dx
 \\
 &=
 p\int_D |\beta_N(t,x)|^{p-2}\beta_N(t,x) \Delta \beta_N(t,x) \ dx 
 \\&\qquad
 - p
 \int_D |\beta_N(t,x)|^{p-2}\beta_N(t,x) \nabla \cdot  q_N(\beta_N(t,\cdot)+z(t,\cdot))(x) \ dx
\end{split}
\]
Integrating by parts the two latter integrals we obtain (writing for short $\beta_N(t)$ instead of $\beta_N(t,\cdot)$)
\begin{multline*}
\frac{d}{dt} \|\beta_N(t)\|^p_{L^p}
+p(p-1)\||\beta_N(t)|^{\frac{p-2}{2}}\nabla \beta_N(t)\|^2_{L^2}
\\
=p(p-1) \langle |\beta_N(t)|^{p-2},\nabla \beta_N(t) \cdot q_N(\beta_N(t)+z(t))\rangle.
\end{multline*}
We need to work on the latter term. 
Let us  write the quadratic term $q_N(\beta_N(t)+z(t))$ in the form $\Theta_N(\|\beta_N(t)+z(t)\|_{L^p}) k\ast (\beta_N(t)+z(t)) \ (\beta_N(t)+z(t))
= \Theta_N(\|\beta_N(t)+z(t)\|_{L^p}) k\ast (\beta_N(t)+z(t)) \ \beta_N(t)+
\Theta_N(\|\beta_N(t)+z(t)\|_{L^p}) k\ast (\beta_N(t)+z(t)) \ z(t)$; then using the basic property
$\langle  |\beta_N(t)|^{p-2}\beta_N(t),\nabla \beta_N(t)\cdot v(t) \rangle =0$ (where $v$ is a divergence free velocity field; this is 
obtained again by integration by parts, see  for instance \cite[Lemma 2.2]{FerrarioBessaih2014})
we obtain 
\begin{multline}
\label{ast}
\frac{d}{dt} \|\beta_N(t)\|^p_{L^p} 
+p(p-1)\||\beta_N(t)|^{\frac{p-2}{2}}\nabla \beta_N(t)\|^2_{L^2} 
\\=
 p(p-1)\langle \Theta_N(\|\beta_N(t)+z(t)\|_{L^p}) |\beta_N(t)|^{p-2}z(t),  \nabla \beta_N(t)\cdot[ k\ast (\beta_N(t)+z(t))]\rangle.
\end{multline}
Let us estimate the r.h.s., using H\"older's and Young's inequalities.
\begin{align*}
&\left| \langle\Theta_N(\|\beta_N(t)+z(t)\|_{L^p})  |\beta_N(t)|^{p-2}z(t),  \nabla \beta_N(t)\cdot[ k\ast (\beta_N(t)+z(t))]\rangle \right|
\\
&\le \left| \Theta_N(\|\beta_N(t)+z(t)\|_{L^p}) \right| \||\beta_N(t)|^{\frac{p-2}{2}} \nabla \beta_N(t)\|_{L^2}
\\
&\qquad \qquad \||\beta_N(t)|^{\frac{p-2}{2}} z(t)\|_{L^2} \|k \ast \left(\beta_N(t)+z(t)\right)\|_{L_{\infty}}
\\
&\le C_p \||\beta_N(t)|^{\frac{p-2}{2}} \nabla \beta_N(t)\|_{L^2} \||\beta_N(t)|^{\frac{p-2}{2}} z(t)\|_{L^2} \|\beta_N(t)+z(t)\|_{L^p} \qquad \text{by \eqref{L_infty}}
\\
&\le C_p\| |\beta_N(t)|^{\frac{p-2}{2}} \nabla \beta_N(t)\|_{L^2} \|\beta_N(t)\|^{\frac{p-2}{2}}_{L^p} \|z(t)\|_{L^p} \left( \|\beta_N(t)\|_{L^p}+ \|z(t)\|_{L^p}\right)
\\
&\le \frac12 \||\beta_N(t)|^{\frac{p-2}{2}} \nabla \beta_N(t)\|^2_{L^2}+ C_p \|\beta_N(t)\|^p_{L^p}\|z(t)\|^2_{L^p}
\\
&\qquad \qquad + C_p\|\beta_N(t)\|^p_{L^p}+ C_p \|z(t)\|^{2p}_{L^p}.
\end{align*}

Coming back to equation \eqref{ast}, we have obtained that
\begin{multline}
\label{ast_2}
\frac{d}{dt} \|\beta_N(t)\|^p_{L^p} 
+\frac{p(p-1)}{2}\||\beta_N(t)|^{\frac{p-2}{2}}\nabla \beta_N(t)\|^2_{L^2} 
\\
\le C_p\left(1+\|z(t)\|^2_{L^p} \right) \|\beta_N(t)\|^p_{L^p} + C_p \|z(t)\|^{2p}_{L^p}.
\end{multline}
Using Gronwall lemma on the inequality 
\begin{equation*}
\frac{d}{dt} \|\beta_N(t)\|^p_{L^p} 
\le C_p\left(1+\|z(t)\|^2_{L^p} \right) \|\beta_N(t)\|^p_{L^p} + C_p \|z(t)\|^{2p}_{L^p}
\end{equation*}
we obtain
\begin{align*}
\|\beta_N(t)\|^p_{L^p} 
&\le \|\xi_0\|^p_{L^p} e^{C_p \int_0^t \left( 1+ \|z(s)\|^2_{L^p}\right)\, {\rm d}s} + C_p\int_0^t e^{C_p\int_r^t \left( 1+\|z(s)\|^2_{L^p} \right)\, {\rm d}s}\|z(r)\|^{2p}_{L^p}\, {\rm d}r
\\
&\le e^{C_pT\left(1+ \sup_{0 \le s\le T}\|z(s)\|^2_{L^p}\right)} \left( \|\xi_0\|^p_{L^p}+C_pT \sup_{0 \le r\le T}\|z(r)\|^{2p}_{L^p} \right).
\end{align*}
Integrating in time \eqref{ast_2}, we obtain that $|\beta_N|^{\frac{p-2}{2}}\nabla \beta_N \in L^2(0,T;L^2)$ which is the regularity we expected.

\end{proof}

\subsection{Existence and uniqueness of the solution to \eqref{Walsh}}
We go back to the original equation \eqref{vort_0} in the form given by \eqref{Walsh} and prove the existence and uniqueness result stated in Theorem \ref{existence_thm}.
\begin{proof}[Proof of Theorem \ref{existence_thm}.]

Pathwise uniqueness is provided in a classical way by a stopping time argument. More precisely, suppose that $\xi^1$ and $\xi^2$ are two solutions to equation \eqref{vort_0}. Both satisfy \eqref{Walsh} thanks to the equivalence between the formulations \eqref{weak} and \eqref{Walsh}.
Let $p>2$; let us define the stopping times
\begin{equation*}
\tau^i_N:= \inf\{t \ge 0: \|\xi^i(t, \cdot)\|_{L^p}\ge N\} \wedge T, \qquad i=1,2,
\end{equation*}
for every $N\ge 1$ and let us set $\tau_N^*:= \tau_N^1 \wedge \tau_N^2$. Setting $\xi^i_N(t)=\xi^i(t \wedge \tau^*_N)$ for $i=1,2$, for all $t \in \left[0,T\right]$ we have that the processes $\xi^1_N$ and $\xi^2_N$ satisfy \eqref{Walsh_truncated}; hence, by the uniqueness result given by Proposition \ref{result_truncated}, $\xi^1_N=\xi^2_N$ $\mathbb{P}$-a.s. for all $t \in \left[0,T\right]$, that is $\xi^1=\xi^2$ on $\left[0, \tau_N^*\right)$ $\mathbb{P}$-a.s. Since $\tau_N^*$ converges $\mathbb{P}$-a.s. to $T$, as $N$ tends to infinity, we deduce $\xi^1=\xi^2$ $\mathbb{P}$-a.s for every $t \in \left[0,T\right]$.

Let us now prove the existence of the solution in $\left[0,T\right]$.
Let $p>2$; let us define the stopping time 
\begin{equation}
\label{sigma_N}
\sigma_N:= \inf\{t \ge 0: \|\xi_N(t, \cdot)\|_{L^p} \ge N\}\wedge T,
\end{equation}
for every $N\ge1$. In Proposition \ref{result_truncated} we have shown the global existence and uniqueness of the solution $\xi_N$ to the truncated problem \eqref{Walsh_truncated}. By uniqueness it follows that, given $M>N$ we have $\xi_N(t, \cdot)=\xi_M(t, \cdot)$ for $t\le \sigma_N$; so we can define a process $\xi$ by $\xi(t, \cdot)=\xi_N(t, \cdot)$ for $t \in \left[0, \sigma_N\right]$. Set $\sigma_{\infty}:= \sup_{N\ge1}\sigma_N$, then Proposition \ref{result_truncated} tells us that we have constructed a solution to \eqref{Walsh_truncated} in the random interval $\left[0, \sigma_{\infty}\right)$, and it is unique. To conclude, we just need to prove that 
\begin{equation}
\label{P}
\sigma_{\infty}=T \qquad \mathbb{P}-a.s. 
\end{equation}
that is equivalent to verify that 
\begin{equation*}
\lim_{N\rightarrow \infty}\mathbb{P}(\sigma_N<T)=0.
\end{equation*}
By Lemma \ref{stima_limi} we have that, for every $N \ge 1$,
\begin{equation*}
\sup_{t \in \left[0,T\right]}\log\|\beta_N(t, \cdot)\|_{L^p} \le \frac1p \log(\|\xi_0\|^p_{L^p}+  C_1(z))+ \frac{C_2(z)}{p}
\end{equation*}
and $\mathbb{E}\left[ C_1(z)\right]$, $\mathbb{E}\left[C_2(z)\right]$ are finite, according to Lemma \ref{regol_conv}.
Hence, for all $N\ge 1$,
\begin{equation*}
\mathbb{E}\left[ \sup_{t \in \left[0,T\right]}\log \|\beta_N(t, \cdot)\|_{L^p}\right]\le C_{p,T}\left( 1+ \log\|\xi_0\|^p_{L^p}\right) < \infty
\end{equation*}
by means of Jensen's inequality.
By Chebychev's inequality, it follows that
\begin{align*}
\mathbb{P}(\sigma_N<T)
&=\mathbb{P}\left(\sup_{t \in \left[0,T\right]}\|\xi_N(t, \cdot)\|_{L^p} >N\right) 
\\
&\le \mathbb{P}\left(\sup_{t \in \left[0,T\right]}\|\beta_N(t, \cdot)\|_{L^p} >\frac N2\right)+
\mathbb{P}\left(\sup_{t \in \left[0,T\right]}\|z(t, \cdot)\|_{L^p} >\frac N2\right)
\\
&\le \frac{1}{\log \left(\frac N2\right)}\mathbb{E} \left[\sup_{t \in \left[0,T\right]}\log\|\beta_N(t, \cdot)\|_{L^p}\right]+ \frac 2N \mathbb{E} \left[ \sup_{t \in \left[0,T\right]}\|z(t, \cdot)\|_{L^p}\right]
\\
&\le \frac{C_{p,T}\left(1+ \log \|\xi_0\|^p_{L^p}\right)}{\log N} + \frac{\hat C_{p,T}}{N},
\end{align*}
for some constant $C_{p,T}$ and $\hat C_{p,T}$. Then we obtain that $\lim_{N\rightarrow \infty}\mathbb{P}(\sigma_N<T)=0$.

Now, we assume $\xi_0$ to be continuous; then the solution $\xi$ given by \eqref{Walsh} is the sum of three terms. The first one, $\int_Dg(t,x,y)\xi_0(y)\, {\rm d}y $ is continuous by the properties of $g$ (see Theorem \ref{beta_gradient}\textit{(ii)}). As regards the second one, since $\xi_0\in C(D)$, then $\xi_0 \in L^{\tilde p}$ for any $\tilde p$. Choosing a value of $\tilde p >4$, we find that $q(\xi)\in C(\left[0,T\right];L_{\tilde p})$ and Lemma \ref{regularization}\textit{(ii)} provides that $Jq(\xi) \in C(\left[0,T\right] \times D)$.
Finally the third term is continuous thanks to Lemma \ref{regol_conv}.
\end{proof}


\section{Malliavin Calculus for the 2D Navier-Stokes equations in the vorticity formulation}
\label{Malliavin_section}

We can use the framework of the Malliavin calculus in the setting introduced in Section \ref{hyp_res}, namely the underlying Gaussian space on which to perform Malliavin calculus is given by the isonormal Gaussian process on the Hilbert space $\mathcal{H}_T$. We recall here some basic facts about the Malliavin calculus. For full details we refer to \cite{Nualart2006}.

A $\cF$-measurable real
valued random variable $F$ is said to be cylindrical if it can be
written as
\begin{equation*}
F=f \left( W(\phi^1) ,\ldots, W(\phi^n) \right)\;,
\end{equation*}
where $\phi^i \in \cH_T$ and $f:\bR^n \to \bR$ is a $C^{\infty}$ bounded function. The set of
cylindrical random variables is denoted by $\mathcal{S}$. The
Malliavin derivative of $F \in \mathcal{S}$ is the 
stochastic process $D F = \{D_{\sigma} F,\ \sigma \in \mathcal{H}_T\}$ given by
\[
{D} F=\sum_{i=1}^{n} \phi^i  \frac{\partial f}{\partial
x_i} \left( W(\phi^1) ,\ldots, W(\phi^n) \right).
\]
The operator $D$ is closable from
$\mathcal{S}$ into $L^p \left( \Omega , \cH_T\right)$. We denote by
$\bD^{1,p}(\mathcal{H}_T)$ the closure of the class of
cylindrical random variables with respect to the norm
\[
\left\| F\right\| _{1,p}=\left( \bE\left( |F|^{p}\right)
+\mathbb{E} \left\| D F\right\|
_{\mathcal{H}_T}^{p} \right) ^{\frac{1}{p}}.
\]
We also introduce the localized spaces; a random variable $F$ belongs to
$\bD^{1,p}_{\rm loc} (\mathcal{H}_T)$ if there exists a sequence of sets
$\Omega_n \subset \Omega$ and a sequence of random variables $F_n \in \bD^{1,p} 
(\cH_T)$ such that $\Omega_n \uparrow \Omega$
almost surely and $F = F_n$ on $\Omega_n$.
Then for any $n$ we set $DF=DF_n$ on $\Omega_n$. We refer to $(\Omega_n, F_n)$ as a localizing sequence for $F$.

The following key result stems from \cite[Theorem
2.1.3]{Nualart2006}:

\begin{theorem}\label{theo:dens}
Let $F$ be a $\cF$-measurable random
variable such that
$F\in \bD^{1,1}_{\rm loc}(\cH_T)$
and 
\begin{equation}
\label{44}
\|DF\|_{\mathcal{H}_T} >0, \quad \mathbb{P}-a.s.
\end{equation}
Then the law of $F$ has a density with respect to the Lebesgue
measure on $\bR$. 
\end{theorem}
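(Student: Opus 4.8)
The plan is to follow the Bouleau--Hirsch route based on the \emph{energy image density} property, since the hypotheses are too weak for the integration-by-parts approach (the latter would require $DF/\|DF\|^2_{\cH_T}$ to lie in the domain of the divergence operator, and would moreover produce a bounded density rather than mere absolute continuity). It suffices to prove that $\mathbb{P}(F\in A)=0$ for every Borel set $A\subset\mathbb{R}$ with zero Lebesgue measure. Throughout I would use the elementary criterion of \cite[Lemma 2.1.1]{Nualart2006}: a finite measure $\mu$ on $\mathbb{R}$ is absolutely continuous whenever $\bigl|\int_{\mathbb{R}}\phi'\,{\rm d}\mu\bigr|\le c\,\|\phi\|_{\infty}$ for all $\phi\in C_c^{\infty}(\mathbb{R})$.

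The heart of the argument is a statement that does not involve the nondegeneracy \eqref{44} at all: for an \emph{arbitrary} $G\in\bD^{1,1}(\cH_T)$ and a bounded, strictly positive weight $\psi$ on $(0,\infty)$ (say $\psi(t)=t/(1+t)$, which also keeps everything finite even though $\|DG\|^2_{\cH_T}$ need not be integrable under $\bD^{1,1}$), the finite measure
\[
\nu_G(A):=\mathbb{E}\bigl[\mathbf{1}_A(G)\,\psi(\|DG\|^2_{\cH_T})\bigr]
\]
is absolutely continuous with respect to Lebesgue measure. I would establish this by approximating $G$ in the $\bD^{1,1}$-norm by smooth cylindrical functionals $G_n=f_n(X)$, with $X=(W(\phi_1),\dots,W(\phi_n))$ and $\{\phi_i\}$ orthonormal in $\cH_T$, so that $\|DG_n\|^2_{\cH_T}=|\nabla f_n(X)|^2$. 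In finite dimensions the co-area formula shows that $A\mapsto\mathbb{E}\bigl[\mathbf{1}_A(f_n(X))\,\psi(|\nabla f_n(X)|^2)\bigr]$ is absolutely continuous: the integrand vanishes on $\{\nabla f_n=0\}$ (here $\psi(0)=0$ is used), and on the complement $f_n$ is a submersion, so the image of the energy measure has a density. One then passes to the limit via the criterion above.

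Granting $\nu_F\ll\mathrm{Leb}$, the conclusion is quick and this is where \eqref{44} enters. Let $(\Omega_n,F_n)$ be a localizing sequence, so $F_n\in\bD^{1,1}$, $F=F_n$ and $DF=DF_n$ on $\Omega_n$, and $\Omega_n\uparrow\Omega$ a.s. Applying the energy image density property to each $F_n$: if $A$ is Lebesgue-null then $\mathbb{E}[\mathbf{1}_A(F_n)\psi(\|DF_n\|^2_{\cH_T})]=0$, hence $\mathbf{1}_A(F_n)\,\psi(\|DF_n\|^2_{\cH_T})=0$ a.s. On $\Omega_n$ one has $F_n=F$ and $\|DF_n\|_{\cH_T}=\|DF\|_{\cH_T}>0$ a.s. by \eqref{44}, so $\psi(\|DF_n\|^2_{\cH_T})>0$ there; this forces $\mathbf{1}_A(F)=0$ a.s. on $\Omega_n$, i.e. $\mathbb{P}(\{F\in A\}\cap\Omega_n)=0$. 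Letting $n\to\infty$ and using $\Omega_n\uparrow\Omega$ yields $\mathbb{P}(F\in A)=0$, as required.

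The main obstacle is the limiting step in the energy image density property. The naive integration by parts is unavailable because neither $DG$ nor $DG/\|DG\|^2_{\cH_T}$ need belong to $\mathrm{Dom}\,\delta$, so $\mathbb{E}[\phi'(G)\psi(\|DG\|^2_{\cH_T})]$ cannot simply be rewritten. One must instead transfer absolute continuity from $\nu_{G_n}$ to $\nu_G$, and since absolute continuity is \emph{not} preserved under arbitrary weak limits, the delicate point is to obtain, uniformly in $n$, an estimate of the form $\bigl|\int\phi'\,{\rm d}\nu_{G_n}\bigr|\le c\,\|\phi\|_\infty$ (equivalently, uniform control of $\nu_{G_n}(U)$ as $|U|\to0$) that survives the passage to the limit under mere $\bD^{1,1}$-convergence of $G_n$ and of the truncated energies $\psi(\|DG_n\|^2_{\cH_T})$. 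This uniform control, coming from the finite-dimensional Gaussian analysis, is the technical core and the reason the weak hypothesis $\bD^{1,1}_{\rm loc}$, rather than Skorokhod integrability of $DF/\|DF\|^2_{\cH_T}$, still suffices.
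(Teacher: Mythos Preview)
The paper does not give a proof of this statement: it is quoted verbatim as a known result, with the attribution ``stems from \cite[Theorem~2.1.3]{Nualart2006}''. So there is no in-paper argument to compare against; the relevant benchmark is the Bouleau--Hirsch proof reproduced in that reference, and your outline does follow its architecture (energy image density for $\bD^{1,1}$ plus localization). Your localization paragraph is correct and complete: once each $\nu_{F_n}\ll\mathrm{Leb}$, the nondegeneracy~\eqref{44} transfers absolute continuity to the law of $F$ exactly as you wrote. The gap is in the energy image density step itself. You correctly flag the passage to the limit from cylindrical $G_n$ to $G\in\bD^{1,1}$ as the crux, but the mechanism you propose---a uniform-in-$n$ bound $\bigl|\int\phi'\,{\rm d}\nu_{G_n}\bigr|\le c\,\|\phi\|_\infty$---would yield a \emph{uniformly bounded} density for $\nu_G$, which is stronger than what holds in general and is not what the finite-dimensional co-area estimate actually delivers (the constant there depends on second derivatives of $f_n$ and blows up along the approximating sequence). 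The standard proof avoids this by a different device: one works with the contraction $g\mapsto g\circ G$ on Lipschitz $g$ (chain rule in $\bD^{1,1}$) together with an approximation of $\mathbf 1_A$ that exploits $|A|=0$, rather than trying to pass an integration-by-parts bound to the limit. As written, your sketch identifies the right difficulty but does not resolve it; to complete the argument you would need to replace the uniform-IBP idea by the Bouleau--Hirsch limiting technique.
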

In order to prove the assumption $F\in \bD^{1,1}_{\rm loc}(\cH_T)$ in our setting, we shall work on a sequence of smoothed processes and use the following result (see \cite[Lemma 1.5.3]{Nualart2006}).

\begin{proposition}
\label{p.3}
Let $\{F_k\}_k$ be a sequence of random variables in $\bD^{1,p}$ for some $p > 1$. 
Assume that the sequence $F_k$ converges to $F$ in
$L^p(\Omega)$ and that
\begin{equation}
\label{e4}
\sup_k \|F_k\|_{1,p} < \infty.
\end{equation}
Then $F$ belongs to $\bD^{1,p}$.
\end{proposition}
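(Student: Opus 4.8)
The plan is to combine weak compactness in the Bochner space $L^p(\Omega;\cH_T)$ with the closedness of the Malliavin derivative operator. From \eqref{e4} one gets in particular $\sup_k \mathbb{E}\|DF_k\|_{\cH_T}^p < \infty$, so $\{DF_k\}_k$ is a bounded sequence in $L^p(\Omega;\cH_T)$. Since $\cH_T$ is a Hilbert space it is reflexive, and because $p>1$ the Bochner space $L^p(\Omega;\cH_T)$ is reflexive as well; hence, passing to a subsequence which I do not relabel, there exists $G\in L^p(\Omega;\cH_T)$ with $DF_k \rightharpoonup G$ weakly. This is the only step where the hypothesis $p>1$ is used.

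Next I would identify the weak limit $G$ with $DF$, using that $D$ --- extended to $\bD^{1,p}$ as the closure of the operator initially defined on the cylindrical variables in $\mathcal{S}$ --- is a closed linear operator from $L^p(\Omega)$ into $L^p(\Omega;\cH_T)$; this is precisely the closability of $D$ recalled above. Its graph $\Gamma=\{(H,DH):H\in\bD^{1,p}\}$ is a norm-closed linear, hence convex, subspace of the product $L^p(\Omega)\times L^p(\Omega;\cH_T)$, so it is weakly closed by Mazur's lemma. Now $F_k\to F$ strongly, hence weakly, in $L^p(\Omega)$, while $DF_k\rightharpoonup G$ weakly in $L^p(\Omega;\cH_T)$, so $(F_k,DF_k)\rightharpoonup(F,G)$ in the product space; since each $(F_k,DF_k)\in\Gamma$ and $\Gamma$ is weakly closed, $(F,G)\in\Gamma$, i.e. $F\in\bD^{1,p}$ and $DF=G$. (Equivalently, one may apply Mazur's lemma directly to produce convex combinations of the $F_k$ whose Malliavin derivatives converge to $G$ strongly in $L^p(\Omega;\cH_T)$ while the combinations themselves converge to $F$ in $L^p(\Omega)$, and then invoke the closedness of $D$ on those combinations.) Since the limit is independent of the chosen subsequence, a routine subsequence argument upgrades this to $DF_k\rightharpoonup DF$ for the full sequence.

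This is a soft functional-analytic argument, and I do not expect a serious obstacle; the content lies entirely in having the right abstract facts available. The two points that deserve a little attention are the reflexivity of $L^p(\Omega;\cH_T)$, which holds precisely because $1<p<\infty$ and $\cH_T$ is reflexive, and the fact that the closure defining $\bD^{1,p}$ turns $D$ into a genuinely closed operator, which is guaranteed by its closability on $\mathcal{S}$.
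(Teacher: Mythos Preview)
Your argument is correct and is essentially the standard proof of this fact. Note that the paper does not prove this proposition at all: it is quoted verbatim from \cite[Lemma 1.5.3]{Nualart2006}, and the proof there proceeds exactly along the lines you describe --- boundedness of $\{DF_k\}$ in the reflexive space $L^p(\Omega;\cH_T)$, extraction of a weakly convergent subsequence, and identification of the limit via the weak closedness of the graph of $D$ (Mazur's lemma applied to the closed convex graph). So there is nothing to compare: you have reproduced the cited argument.
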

We use these results for the random variable $\xi(t,x)$, solution to equation \eqref{Walsh} and the random variable $\xi_N(t,x)$ solution to equation \eqref{Walsh_truncated}. More precisely, by means of Proposition \ref{p.3}, in Section \ref{Mall_trunc_eq}, we show that $\xi_N(t,x) \in \mathbb{D}^{1,p}$; hence $\xi(t,x) \in \mathbb{D}^{1,p}_{\text{loc}}$.
In Subsection \ref{non_deg} we prove that $\xi_N(t,x)$ satisfies assumption \eqref{44} of Theorem \ref{theo:dens}. The same condition holds for $\xi(t,x)$ as we shall see in Subsection \ref{density_subsec}.

\subsection{Malliavin analysis of the truncated equation}
\label{Mall_trunc_eq}
In order to show that $\xi_N(t,x) \in \mathbb{D}^{1,p}$ we use Proposition \ref{p.3}. We introduce a Picard approximation sequence $\left\{ \xi^k_{N}\right\}_k$ for $\xi_N$ and we show that as $k \rightarrow +\infty$, the sequence $\xi^k_{N}(t,x)$ converges to $\xi_N(t,x)$ in $L^p(\Omega)$ (for $N\ge1$ fixed) and $\sup_{k}\|\xi^k_{N}(t,x)\|_{1,p} < \infty$ uniformly in $(t,x)\in \left[0,T\right] \times D$. 
A similar argument has been used in \cite{Cardon-Weber2001} for the Cahn-Hilliard stochastic equation and in \cite{Morien1999} for the one dimensional Burgers equation.
Let us point out that the smoothness of the density cannot be obtained via this location argument, since this procedure does not provide the boundedness of the Malliavin derivatives of every order.

First, we need to improve the result of Proposition \ref{result_truncated}. This is done in the following theorem, whose proof provides the approximating sequence $\{\xi_N^k\}_k$ of the Picard scheme.

\begin{theorem}
\label{conv_Lp}
Fix $N\ge 1$ and $p>4$. If $b>0$ in \eqref{delta} and $\xi_0$ is a continuous function on $D$, then the solution process $\xi_N$ to 
\eqref{Walsh_truncated}
satisfies 
\begin{equation}
\label{p_mean}
\sup_{(t,x) \in \left[0,T\right] \times D} \mathbb{E}|\xi_N(t,x)|^p < \infty.
\end{equation}
\end{theorem}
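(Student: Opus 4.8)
The plan is to construct the Picard approximation sequence $\{\xi_N^k\}_k$ explicitly and to prove the uniform bound \eqref{p_mean} by induction on $k$, then pass to the limit. Set $\xi_N^0(t,x):=M_0(t,x)+z(t,x)$ (the deterministic heat-evolved initial datum plus the stochastic convolution), and define recursively
\begin{equation*}
\xi_N^{k+1}(t,x):=M_0(t,x)+\big(J q_N(\xi_N^k)\big)(t,x)+z(t,x).
\end{equation*}
Each $\xi_N^k$ is $\cF_t$-adapted with paths in $C([0,T];L^p)$, by the same estimates used in Proposition \ref{result_truncated}, and the contraction argument there shows $\xi_N^k\to\xi_N$ in $\cB$, hence $\xi_N^k(t,x)\to\xi_N(t,x)$ in probability pointwise; with the uniform $p$-moment bound below this upgrades to $L^p(\Omega)$ convergence.

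The core is the uniform-in-$k$ estimate $\sup_k\sup_{(t,x)}\bE|\xi_N^k(t,x)|^p<\infty$. First I would handle the two $k$-independent terms: $M_0(t,x)=\int_D g(t,x,y)\xi_0(y)\,{\rm d}y$ is bounded uniformly in $(t,x)$ because $\xi_0\in C(D)$ and $\int_D g(t,x,y)\,{\rm d}y=1$; and $\sup_{(t,x)}\bE|z(t,x)|^p<\infty$ follows since $z(t,x)$ is Gaussian with variance bounded by the computation in \eqref{conv_stoc} (uniformly in $(t,x)$, since $|e_k(x)|=\tfrac1{2\pi}$), so all its moments are controlled. For the nonlinear term, from \eqref{J} and the chain of estimates proving Lemma \ref{regularization}\textit{(ii)} — Minkowski, Young with $\|g(r-s,0,\cdot)\|_{L_1}\le C$, and the Beta-function time integral, choosing $a\in(\tfrac{p+2}{2p},1)$ — one gets the pointwise-in-$x$ bound
\begin{equation*}
|(Jq_N(\xi_N^k))(t,x)|\le C_p\int_0^t(t-s)^{-\frac{p+2}{2p}}\|q_N(\xi_N^k(s,\cdot))\|_{L_p}\,{\rm d}s .
\end{equation*}
Now I would \emph{not} use the crude bound $\|q_N(\xi)\|_{L_p}\le C_p(N+1)^2$ from Lemma \ref{Lipschitz} (which already gives a uniform deterministic bound and would finish it trivially) — actually, wait: that bound \emph{does} immediately give $\sup_k\sup_{(t,x)}|(Jq_N(\xi_N^k))(t,x)|\le C_p(N+1)^2\int_0^T(t-s)^{-\frac{p+2}{2p}}\,{\rm d}s\le C_{N,p,T}<\infty$, deterministically, because the exponent $-\frac{p+2}{2p}>-1$ for $p>4$ makes the integral finite. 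So the nonlinear term is uniformly bounded by a deterministic constant depending only on $N,p,T$. Combining the three pieces via $|\xi_N^k|^p\le C_p(|M_0|^p+|Jq_N(\xi_N^k)|^p+|z|^p)$ and taking expectation yields \eqref{p_mean} for each $\xi_N^k$ uniformly in $k$, and by the $L^p(\Omega)$-convergence $\xi_N^k(t,x)\to\xi_N(t,x)$, the same bound passes to $\xi_N(t,x)$.

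I would close by noting that the limit identification is clean here: uniqueness in Proposition \ref{result_truncated} already pins down $\xi_N$, and the Picard iterates converge to it in $\cB$ by contraction, so no separate tightness/compactness argument is needed. The only mild technical point — the ``main obstacle'' — is making sure the time-singular kernel $(t-s)^{-\frac{p+2}{2p}}$ is integrable, which forces the hypothesis $p>4$ (equivalently the admissible $a$-window $(\tfrac{p+2}{2p},1)$ is nonempty and the resulting exponent exceeds $-1$); this is exactly the range already flagged in Lemma \ref{regularization}\textit{(ii)}, so it fits the hypotheses of the theorem. Everything else is a routine combination of the estimates on $g$, $\nabla_y g$ (Theorem \ref{beta_gradient}), the bound on $q_N$ (Lemma \ref{Lipschitz}), and the Gaussian moment bound for $z$ (Lemma \ref{regol_conv}).
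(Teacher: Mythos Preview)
Your argument is correct, and in fact more direct than the paper's. The paper proves \eqref{p_mean} by exploiting the \emph{Lipschitz} property of $q_N$ (estimate \eqref{qlip}): it derives the recursive bound
\[
\mathbb{E}|\xi_N^{k+1}(t,x)-\xi_N^k(t,x)|^p \le C_{N,T,p}\int_0^t\!\!\int_D \mathbb{E}|\xi_N^{k}(s,y)-\xi_N^{k-1}(s,y)|^p\,{\rm d}y\,{\rm d}s
\]
via Lemma \ref{regularization}\textit{(ii)}, iterates it to obtain a summable series, and concludes that $\xi_N^k(t,x)\to\xi_N(t,x)$ in $L^p(\Omega)$ uniformly in $(t,x)$; the uniform moment bound \eqref{p_mean_k} and \eqref{p_mean} drop out as byproducts. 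You instead invoke the global \emph{boundedness} \eqref{stima_q} of $q_N$, which makes the nonlinear term $(Jq_N(\xi_N^k))(t,x)$ bounded by a deterministic constant $C_{N,p,T}$, so \eqref{p_mean} is immediate once $M_0$ and $z$ are controlled. (In fact your decomposition applies directly to $\xi_N$ itself, so the Picard iterates are not even needed for the bare statement.) The trade-off is that the paper's route simultaneously delivers the uniform $L^p(\Omega)$-convergence of $\xi_N^k(t,x)$ needed as an input to Proposition \ref{p.3} in the proof of Theorem \ref{D12}; you recover this convergence by a side argument (pathwise contraction in $\mathcal B$, then pointwise via the $J$-smoothing estimate, then uniform integrability from your moment bound). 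That side step is valid but deserves one extra line: $\mathcal B$-convergence alone is only in $L^p(D)$ for each $t$, not pointwise in $x$, so you should make explicit that $\xi_N^{k+1}(t,x)-\xi_N(t,x)=J\big(q_N(\xi_N^k)-q_N(\xi_N)\big)(t,x)$ and then apply the pointwise bound from Lemma \ref{regularization}\textit{(ii)} together with \eqref{qlip} to pass from $\|\xi_N^k-\xi_N\|_{\mathcal B}\to 0$ a.s.\ to $\sup_{(t,x)}|\xi_N^{k+1}(t,x)-\xi_N(t,x)|\to 0$ a.s.
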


\begin{proof}
Let us consider a Picard iteration scheme for equation \eqref{Walsh_truncated}. We define 
\begin{equation}
\label{pic1}
\xi^0_{N}(t,x)=\int_D\xi_0(y)g(t,x,y)\,{\rm d}y 
\end{equation}
and recursively, for $k \ge 0$
\begin{align}
\label{pic2}
\xi_{N}^{k+1}(t,x)&=\xi^0_{N}(t,x)+ z(t,x) +(Jq_N(\xi_N^k))(t,x)
\end{align}
with $z$ and $Jq_N(\xi_N^k)$ defined respectively as in \eqref{reg_z} and \eqref{defJfi}.
Notice that every term in \eqref{pic2} is well defined. The well posedness of the stochastic term follows from \eqref{conv_stoc}. On the other hand \eqref{continuity}, \eqref{stima_q} and Proposition \ref{result_truncated} provide the well posedness of the non linear term.

For every $(t,x) \in \left[0,T \right] \times D$, from Lemma \ref{regularization}\textit{(ii)} (for $\gamma=p$, provided $p>4$) and Lemma \ref{Lipschitz} we get

\begin{equation}
\label{50}
\mathbb{E}|\xi_{N}^{k+1}(t,x)-\xi_{N}^{k}(t,x)|^p \le C_{N,T,p} \int_0^t \int_D \mathbb{E}|\xi_{N}^{k}(s,y)-\xi_{N}^{k-1}(s,y)|^p\,{\rm d}y\,{\rm d}s.\end{equation}
For every $k \ge 1$ we set
\begin{equation*}
\varphi_k(t,x) = \mathbb{E}|\xi_{N}^{k}(t,x)-\xi_{N}^{k-1}(t,x)|^p.
\end{equation*}
Then $\varphi_1 \in L^1(\left[0,T\right] \times D)$ for the same considerations made for the well posedness of \eqref{pic2}. From \eqref{50}, by iteration (see e.g. \cite[Theorem 2.4.3]{Nualart2006}) and \cite[Proposition 5.1]{Morien1999}) we get
\begin{align*}
\varphi_{k+1}(t,x)& \le C_{N,p,T} \int_0^t \int_D \varphi_k(s,y)\, {\rm d}y\, {\rm d}s \le ... 
\\
&\le C_{N,p,T}^k \int_0^t \int_D \left[\int_0^{s_1} \int_D \cdot \cdot \cdot \left(\int_0^{s_{k-1}} \int_D \varphi_1(s_k,y_k)\,{\rm d}y_k\, {\rm d}s_k\right) \cdot \cdot \cdot \right]\,{\rm d}y_1 \, {\rm d}s_1
\\
&=C_{N,p,T}^k|D|^{k-1} \frac{t^{k-1}}{(k-1)!}\int_0^t \int_D \varphi_1(s_k,y_k)\,{\rm d}y_k \, {\rm d}s_k.
\\
\end{align*}
This allows us to infer that 
\begin{align*}
\sum_{k=1}^{\infty}\sup_{(t,x) \in \left[0,T \right] \times D} \varphi_{k+1}(t,x) \le \left( \sum_{k=1}^{\infty}C_{N,p,T}^k |D|^{k-1}\frac{t^{k-1}}{(k-1)!}\right)\int_0^t \int_D \varphi_1(s,y)\,{\rm d}y \, {\rm d}s.
\end{align*}
Since the latter series converges, we deduce that
\begin{equation*}
\sum_{k = 1}^{\infty} \sup_{(t,x) \in \left[0,T\right]\times D}\mathbb{E}|\xi_{N}^{k+1}(t,x)-\xi_{N}^{k}(t,x)|^p < \infty.
\end{equation*}
This implies that, as $k$ tends to infinity, the sequence $\xi_{N}^{k}(t,x)$ converges in $L^p(\Omega)$, uniformly in time and space, to a stochastic process $\xi_N(t,x)$. Moreover,
\begin{equation}
\label{p_mean_k}
\sup_k\sup_{(t,x) \in \left[0,T\right] \times D} \mathbb{E}|\xi_{N}^{k}(t,x)|^p < \infty.
\end{equation}
It follows that the process $\xi_N(t,x)$ is adapted and satisfies \eqref{Walsh_truncated} and \eqref{p_mean}.
\end{proof}



Let us study the Malliavin derivative of the solution $\xi_N$ to the smoothed equation \eqref{Walsh_truncated}.
Let us recall that the underlying Gaussian space on which to perform Malliavin calculus is given by the isonormal Gaussian process on the Hilbert space $\mathcal{H}_T:= L^2(0,T; L^2_Q)$ which can be associated to the noise coloured in space by the covariance $Q$. 

In this part, to keep things as simple as possible, in some points we go back to the notation involving $\xi_N$ and $v_N$ instead of $q_N(\xi_N)$, with $v_N=k\ast \xi_N$. Keeping in mind the definition of $\tilde q_N(\xi)$ given \eqref{q_tilde} we state the following result. 

\begin{theorem}
\label{D12}
Fix $N \ge 1$. Suppose that $b>0$ in \eqref{delta} and $\xi_0$ is a continuous function on $D$.
Then for all $(t,x) \in \left[0,T\right] \times D$ the solution $\xi_N(t,x)$ to \eqref{Walsh_truncated} belongs to $\mathbb{D}^{1,p}$ for every $p>4$ and its Malliavin derivative satisfies the equation
\begin{align}
\label{Malliavin_derivative}
&D_{r,z}\xi_N(t,x)=g(t-r, x,z) \pmb{1}_{\left[0,t\right]}(r) 
\notag\\
&+ \int_r^t\int_D \nabla_y g(t-s,x,y) \cdot \  v_N(s,y) \Theta_{N}(\|\xi_N(s, \cdot)\|_{L^{p}}) \ D_{r,z}\xi_N(s,y) {\rm d}y\, {\rm d} s
\notag\\
&+ \int_r^t\int_D \left( \nabla_yg(t-s,x,y) \cdot \int_D  k(y- \alpha) D_{r,z}\xi_N(s, \alpha) \, {\rm d}\alpha\right)
\notag\\
&\qquad \qquad \Theta_{N}(\|\xi_N(s, \cdot)\|_{L^{p}}) \xi_N(s,y)\,{\rm d}y \,{\rm d} s
\notag\\
&+ \int_r^t\int_D \nabla_y g(t-s,x,y) \cdot \ \tilde q_N(\xi_N(s,\cdot))(y) 
\notag\\
&  \qquad \qquad p\|\xi_N(s, \cdot)\|^{1-p}_{L^p}\left(\int_D|\xi_N(s, \beta)|^{p-2}\xi_N(s, \beta)D_{r,z}\xi_N(s, \beta)\, {\rm d} \beta \right){\rm d}y\, {\rm d} s
\end{align}
if $ r \le t$, and $D_{r,z}\xi_N(t,x)=0$ if $r>t$.
\end{theorem}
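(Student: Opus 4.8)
The plan is to differentiate, in the Malliavin sense, the Picard iterates $\{\xi_N^k\}_k$ built in the proof of Theorem~\ref{conv_Lp}, to establish a bound $\sup_k\|\xi_N^k(t,x)\|_{1,p}<\infty$ uniform in $(t,x)\in[0,T]\times D$, and then to conclude by Proposition~\ref{p.3}, recalling from Theorem~\ref{conv_Lp} that $\xi_N^k(t,x)\to\xi_N(t,x)$ in $L^p(\Omega)$, uniformly in $(t,x)$. Equation \eqref{Malliavin_derivative} is then obtained by passing to the limit in the recursion solved by $D\xi_N^k$: the argument of Proposition~\ref{p.3} also provides the weak convergence $D\xi_N^k(t,x)\rightharpoonup D\xi_N(t,x)$ in $L^p(\Omega;\mathcal{H}_T)$, the (truncated, hence bounded) coefficients converge to those of \eqref{Malliavin_derivative}, and the latter is a linear Volterra equation with a unique solution, which identifies the limit.

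First I would run the induction on $k$. Since $\xi_N^0(t,x)=\int_D\xi_0(y)g(t,x,y)\,{\rm d}y$ and $(Jq_N(\xi_N^0))(t,x)$ are deterministic, $D\xi_N^0=0$, and as $z(t,x)=W(g(t-\cdot,x,\cdot)\pmb{1}_{[0,t]})$,
\[
D_{r,z}\xi_N^1(t,x)=D_{r,z}z(t,x)=g(t-r,x,z)\,\pmb{1}_{[0,t]}(r),\qquad \mathbb{E}\|D\xi_N^1(t,x)\|_{\mathcal{H}_T}^p=\|g(t-\cdot,x,\cdot)\pmb{1}_{[0,t]}\|_{\mathcal{H}_T}^p ,
\]
the right-hand side being bounded uniformly in $(t,x)$ by the computation \eqref{conv_stoc} (where $b>0$ is used). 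For the step $k\mapsto k+1$, I would assume $\xi_N^k(s,y)\in\mathbb{D}^{1,p}$ for all $(s,y)$, with $(r,z,y)\mapsto D_{r,z}\xi_N^k(s,y)$ jointly measurable and $\int_D\mathbb{E}\|D\xi_N^k(s,y)\|_{\mathcal{H}_T}^p\,{\rm d}y$ locally bounded in $s$ — so that $\xi_N^k(s,\cdot)$ lies in the Banach-valued space $\mathbb{D}^{1,p}(\mathcal{H}_T;L^p)$ — and differentiate $q_N(\xi_N^k)(s,y)=\Theta_N(\|\xi_N^k(s,\cdot)\|_{L^p})\,\xi_N^k(s,y)\,(k\ast\xi_N^k)(s,y)$ by the product rule, using that $\xi\mapsto k\ast\xi$ is a deterministic bounded operator (so $D_{r,z}(k\ast\xi_N^k)=k\ast D_{r,z}\xi_N^k$) and that $\xi\mapsto\Theta_N(\|\xi\|_{L^p})$, written as $\widetilde{\Theta}_N(\|\xi\|_{L^p}^p)$ with $\widetilde{\Theta}_N(m):=\Theta_N(m^{1/p})$ and $\|\xi\|_{L^p}^p=\int_D|\xi(\beta)|^p\,{\rm d}\beta$, is of class $C^1$ on $L^p$ with bounded differential $h\mapsto\Theta_N'(\|\xi\|_{L^p})\,\|\xi\|_{L^p}^{1-p}\langle|\xi|^{p-2}\xi,h\rangle$ — the apparent singularity of this formula at the origin being absent because $\widetilde{\Theta}_N'\equiv0$ near $0$ — so that the Malliavin chain rule (see \cite[Section~1.2]{Nualart2006}) applies. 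Since $D$ is closed and $J$ integrates against a deterministic kernel, $D_{r,z}[(Jq_N(\xi_N^k))(t,x)]=\int_0^t\!\int_D\nabla_y g(t-s,x,y)\cdot D_{r,z}[q_N(\xi_N^k)(s,y)]\,{\rm d}y\,{\rm d}s$, and assembling the three pieces of $D_{r,z}q_N(\xi_N^k)$ gives exactly the $(k,k+1)$-analogue of the four-term equation \eqref{Malliavin_derivative}.

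The core of the argument is the uniform estimate. Thanks to the truncation, every coefficient in the recursion for $D\xi_N^{k}$ is bounded: $|\Theta_N|\le1$, $|\Theta_N'|\le2$, and on $\{\Theta_N(\|\xi_N^k(s,\cdot)\|_{L^p})\neq0\}$ one has $\|\xi_N^k(s,\cdot)\|_{L^p}\le N+1$, hence $\|k\ast\xi_N^k(s,\cdot)\|_{L_\infty}\le C_p(N+1)$ by \eqref{L_infty} and $\|\tilde q_N(\xi_N^k(s,\cdot))\|_{L_p}\le C_p(N+1)^2$ by Lemma~\ref{Lipschitz}; thus the equation is \emph{linear} in $D\xi_N^k$ with bounded coefficients. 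Taking $\mathcal{H}_T$-norms in $(r,z)$ and using Minkowski's integral inequality, Young's inequality for the spatial convolutions (with $\|k\|_{L_\alpha}<\infty$, $\alpha<2$, from Lemma~\ref{stimaBS}), Hölder's inequality and the heat-kernel bounds \eqref{beta1}--\eqref{beta2}, one argues exactly as in the proof of Theorem~\ref{conv_Lp} — via Lemma~\ref{regularization}\textit{(ii)} with $\gamma=p>4$ and the moment bound \eqref{p_mean_k} — to reach a Volterra inequality
\[
\Phi_{k+1}(t)\le C_{N,T,p}\left(1+\int_0^t\Phi_k(s)\,{\rm d}s\right),\qquad \Phi_k(t):=\sup_{x\in D}\mathbb{E}\|D\xi_N^k(t,x)\|_{\mathcal{H}_T}^p .
\]
Since $\Phi_1$ is bounded on $[0,T]$ by the first step, iteration gives $\sup_k\sup_{t\in[0,T]}\Phi_k(t)\le C_{N,T,p}\,e^{C_{N,T,p}T}<\infty$, which together with \eqref{p_mean_k} is the bound $\sup_k\sup_{(t,x)}\|\xi_N^k(t,x)\|_{1,p}<\infty$ required by Proposition~\ref{p.3}.

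I expect the main obstacle to be making the induction step fully rigorous: carrying out the differentiation in the Banach-valued Malliavin space $\mathbb{D}^{1,p}(\mathcal{H}_T;L^p)$ — joint measurability and integrability of $D\xi_N^k(s,\cdot)$, commutation of $D$ with $J$ and with $k\ast\cdot$, and the $C^1$-chain rule for $\xi\mapsto\Theta_N(\|\xi\|_{L^p})$, including its behaviour near $\|\xi\|_{L^p}=0$ — and, coupled to this, the Minkowski/Young/Hölder bookkeeping and heat-kernel estimates needed to close the linear Volterra bound for $\mathbb{E}\|D\xi_N^k(t,x)\|_{\mathcal{H}_T}^p$ uniformly in $k$.
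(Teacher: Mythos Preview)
Your proposal is correct and follows essentially the same route as the paper: induction along the Picard iterates, differentiation of $q_N(\xi_N^k)$ via the product and chain rules (the paper writes the resulting three integrals as $I_1,I_2,I_3$ and estimates each with Minkowski/H\"older and the heat-kernel bounds \eqref{beta1}--\eqref{beta2}, arriving at the recursion $\mathbb{E}\|D\xi_N^{k+1}(t,x)\|^p_{\mathcal{H}_T}\le C_p+C_{N,T,p}\int_0^t\int_D\mathbb{E}\|D\xi_N^k(s,y)\|^p_{\mathcal{H}_T}\,{\rm d}y\,{\rm d}s$, equivalent to your $\Phi_{k+1}\le C(1+\int_0^t\Phi_k)$), iteration of that inequality, and conclusion by Proposition~\ref{p.3}. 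The only notable difference is that the paper obtains \eqref{Malliavin_derivative} simply by ``applying $D$ to both sides'' of \eqref{Walsh_truncated}, whereas you give a more careful weak-convergence/uniqueness argument; note also that Lemma~\ref{regularization}\textit{(ii)} is not used here---the estimates go through \eqref{beta1}--\eqref{beta2} directly.
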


\begin{proof}

The proof of this part is based on Proposition \ref{p.3}. Let us consider the Picard approximation sequence $\left\{\xi_{N}^{k}(t,x)\right\}_k$ defined in \eqref{pic1}-\eqref{pic2}; given the convergence (as $k \rightarrow +\infty$) obtained in the proof of Theorem \ref{conv_Lp}, it is sufficient to show that
\begin{equation}
\label{e5}
\sup_k \sup_{(t,x) \in [0,T] \times D} \bE \|D \xi_{N}^{k}(t,x)\|^{p}_{\cH_T} < +\infty,
\end{equation}
in order to prove that $\xi_N(t,x) \in \mathbb{D}^{1,p}$.
Since $\xi_{N}^{0}$ is deterministic, it belongs to $\mathbb{D}^{1,p}$ and its Malliavin derivative is zero.
Let us suppose that, for $k \ge 1$ and $p>4$, $\xi_{N}^{k}(t,x)\in \mathbb{D}^{1,p}$ for every $(t,x) \in \left[0,T\right] \times D$ and 
\begin{equation*}
\sup_{(t,x) \in \left[0,T\right] \times D} \mathbb{E} \|D\xi_{N}^{k}(t,x)\|^p_{\mathcal{H}_T} < \infty.
\end{equation*}
Applying the operator $D$ to equation \eqref{pic2} we obtain 
that the Malliavin derivative of $\xi_N^k(t,x)$ satisfies the equation
(for more details see for instance \cite[Proposition 2.15 and Proposition 2.16]{da2008introduction} and \cite[Proposition 1.3.2]{Nualart2006})
\begin{align}
\label{mal_pic}
&D_{r,z}\xi_{N}^{k+1}(t,x)=g(t-r, x,z) \pmb{1}_{\left[0,t\right]}(r) 
\notag\\
&+ \int_r^t\int_D \nabla_y g(t-s,x,y) \cdot \  v_{N}^{k}(s,y) \Theta_{N}(\|\xi_{N}^{k}(s, \cdot)\|_{L^{p}}) \ D_{r,z}\xi_{N}^{k}(s,y) {\rm d}y\, {\rm d} s
\notag\\
&+ \int_r^t\int_D \left( \nabla_yg(t-s,x,y) \cdot \int_D  k(y- \alpha) D_{r,z}\xi_{N}^{k}(s, \alpha) \, {\rm d}\alpha\right)
\notag\\
& \qquad \qquad  \Theta_{N}(\|\xi_{N}^{k}(s, \cdot)\|_{L^{p}}) \xi_{N}^{k}(s,y)\, {\rm d}y\, {\rm d} s
\notag\\
&+ \int_r^t\int_D \nabla_y g(t-s,x,y) \cdot \  \tilde q_N(\xi_{N}^{k}(s,\cdot))(y)
\notag\\
& \qquad \qquad p\|\xi_{N}^{k}(s, \cdot)\|^{1-p}_{L^p}\left(\int_D|\xi_{N}^{k}(s, \beta)|^{p-2} \xi_{N}^{k}(s, \beta)D_{r,z}\xi_{N}^{k}(s, \beta)\, {\rm d} \beta \right){\rm d}y \,{\rm d} s.
\end{align}
We analyze the three integrals in the r.h.s.
Let us set for simplicity
\begin{equation}
\label{I1}
I_1(r,z):=\int_r^t\int_D \nabla_y g(t-s,x,y) \cdot \  v_{N}^{k}(s,y) \Theta_{N}(\|\xi_{N}^{k}(s, \cdot)\|_{L^{p}}) \ D_{r,z}\xi_{N}^{k}(s,y) {\rm d}y\, {\rm d} s
\end{equation}
\begin{multline}
\label{I2}
I_2(r,z):=\int_r^t\int_D \left( \nabla_yg(t-s,x,y) \cdot \int_D  k(y- \alpha) D_{r,z}\xi_{N}^{k}(s, \alpha) \, {\rm d}\alpha\right) 
\\
\Theta_{N}(\|\xi_{N}^{k}(s, \cdot)\|_{L^{p}}) \xi_{N}^{k}(s,y)\, {\rm d}y\, {\rm d} s
\end{multline}
\begin{multline}
\label{I3}
I_3(r,z):= \int_r^t\int_D \nabla_y g(t-s,x,y) \cdot \ \tilde q_N(\xi_{N}^{k}(s,\cdot))(y)
 \\
\qquad \left(p\|\xi_{N}^{k}(s, \cdot)\|^{1-p}_{L^p}\int_D|\xi_{N}^{k}(s, \beta)|^{p-2} \xi_{N}^{k}(s, \beta)D_{r,z}\xi_{N}^{k}(s, \beta)\, {\rm d} \beta \right)\,{\rm d}y \,{\rm d} s.
 \end{multline}
Then
\begin{align}
\label{p-est}
\mathbb{E}\|D\xi_{N}^{k+1}(t,x)\|^{p}_{\mathcal{H}_T} \le C_p \left(\|g(t-\cdot,x,\cdot){\pmb 1}_{[0,t]}(\cdot) \|^{p}_{\cH_T} 
+ \sum_{i=1}^3\mathbb{E}\|I_i\|^{p}_{\mathcal{H}_T} \right).
\end{align}
Let us estimate the various terms in (\ref{p-est}). 
By the definition of $\mathcal{H}_T$ and 
\eqref{conv_stoc}, we get
\begin{align*}
\|g(t-\cdot,x,\cdot){\pmb 1}_{[0,t]}(\cdot) \|^{p}_{\cH_T} 
&=\left(\|g(t-\cdot,x,\cdot)\|^{2}_{\cH_t}\right)^{\frac p2}  
\le \left( \frac {1}{2(2\pi)^2}\sum_{k \in \mathbb{Z}_0^2}  |k|^{-2-2b}\right)^{\frac p2}
< \infty.
\end{align*}

Minkowski's and H\"older's inequalities imply that
\begin{align*}
\mathbb{E}\|I_1&\|^{p}_{\mathcal{H}_T}
\le \mathbb{E} \left[\int_0^t\int_D \left|\nabla_yg(t-s,x,y) \cdot \  v_{N}^{k}(s,y) \Theta_{N}(\|\xi_{N}^{k}(s, \cdot)\|_{L^{p}})\right| \ \|D\xi_{N}^{k}(s,y) \|_{\mathcal{H}_T}{\rm d}y {\rm d} s \right]^{p}
\\
&\le \mathbb{E} \left[\int_0^t  |\Theta_{N}(\|\xi_{N}^{k}(s, \cdot)\|_{L^{p}})| \|\nabla_yg(t-s,x,\cdot)\|_{L^{\frac{p}{p-1}}} \right.
\\
&\left. \qquad \qquad \qquad \left(\int_D  \ | v_{N}^{k}(s,y)|^p \ \|D\xi_{N}^{k}(s,y) \|^p_{\mathcal{H}_T}{\rm d}y \right)^{\frac1p}{\rm d} s \right]^{p}
\\
&\le\mathbb{E} \left[\int_0^t |\Theta_{N}(\|\xi_{N}^{k}(s, \cdot)\|_{L^{p}})| \|\nabla_yg(t-s,x,\cdot)\|_{L^{\frac{p}{p-1}}} \right.
\\
&\left. \qquad \qquad \qquad  \|v_{N}^{k}(s, \cdot)\|_{L_{\infty}} \ \|D\xi_{N}^{k}(s,\cdot) \|_{L^{p}(D;\mathcal{H}_T)} \, {\rm d} s \right]^{p}
\\
&\le  C_N\left(\int_0^t \int_D |\nabla_y g(t-s,x,y)|^{\frac{p}{p-1}} \, {\rm d}y  \,{\rm d} s\right)^{p-1} 
\\
&\qquad \qquad \qquad \mathbb{E}\left[\int_0^t \|D\xi_{N}^{k}(s, \cdot)\|^{p}_{L^{p}(D;\mathcal{H}_T)} \, {\rm d}s\right]\ \qquad \text{by \eqref{L_infty}} 
\\
&\le C_{N} t^{\frac{p}{2}-2}\int_0^t \int_D \mathbb{E} \|D\xi_{N}^{k}(s,y)\|^{p}_{\mathcal{H}_T}\, {\rm d}y \, {\rm d}s \qquad \text{by \eqref{beta2} provided $p>4$.} 
\end{align*}
As regards the term $I_2$ using Fubini's Theorem, Minkowski's and H\"older's inequalities we have

\begin{align*}
\mathbb{E}\|I_2\|&^{p}_{\mathcal{H}_T}
=\mathbb{E}\left \Vert \int_r^t\int_D \left(  \int_D\nabla_yg(t-s,x,y) \cdot   k(y- \alpha)  \xi_{N}^{k}(s,y) \, {\rm d}y \right) \right.
\\
&\left. \qquad \qquad \qquad \Theta_{N}(\|\xi_{N}^{k}(s, \cdot)\|_{L^p})D\xi_N^k(s, \alpha)\, {\rm d}\alpha\, {\rm d}s \right\Vert^{p}_{\mathcal{H}_T}
\\
&\le \mathbb{E}\left[ \int_0^t\int_D \left| \left(  \int_D\nabla_yg(t-s,x,y) \cdot  k(y- \alpha)  \xi_{N}^{k}(s,y) \, {\rm d}y \right) \right. \right.
\\
&\left. \left. \qquad \qquad \qquad\Theta_{N}(\|\xi_{N}^{k}(s, \cdot)\|_{L^{p}}) \right| \|D\xi_{N}^{k}(s,\alpha)\|_{\mathcal{H}_T} \, {\rm d}\alpha\, {\rm d} s\right]^{p}
\\
&\le \mathbb{E}\left[ \int_0^t\int_D       \|\nabla_yg(t-s,x,\cdot) \cdot k(\cdot- \alpha)\|_{L^{\frac{p}{p-1}}}\|\xi_{N}^{k}(s, \cdot)\|_{L^{p}} \right.
\\
& \left. \qquad \qquad \qquad |\Theta_{N}(\|\xi_{N}^{k}(s, \cdot)\|_{L^{p}})|\           \|D\xi_{N}^{k}(s,\alpha)\|_{\mathcal{H}_T} \, {\rm d}\alpha\, {\rm d} s\right]^{p}
\\
&\le C_N\mathbb{E}\left[ \int_0^t\int_D     \|\nabla_yg(t-s,x,\cdot) \cdot k(\cdot- \alpha)\|_{L^{\frac{p}{p-1}}}     \|D\xi_{N}^{k}(s,\alpha)\|_{\mathcal{H}_T} \, {\rm d}\alpha\, {\rm d} s\right]^{p}
\\
&\le C_N\mathbb{E}\left[\int_0^t \left(\int_D \int_D|\nabla_yg(t-s,x,y) \cdot k(y- \alpha)|^{\frac{p}{p-1}}\, {\rm d}y\, {\rm d}\alpha\right)^{\frac{p-1}{p}} \right.
\\
&\left. \qquad \qquad \qquad \|D\xi_{N}^{k}(s, \cdot)\|_{L^p(D;\mathcal{H}_T)}\, {\rm d}s\right]^{p}.
\end{align*}
By means of Fubini's Theorem, if $p>4$, we can estimate the inner integral
\begin{align}
\label{star2}
\int_D \int_D|&\nabla_yg(t-s,x,y) \cdot k(y- \alpha)|^{\frac{p}{p-1}}\, {\rm d}y\, {\rm d}\alpha
\\
&\le\int_D \int_D|\nabla_yg(t-s,x,y)|^{\frac{p}{p-1}} \ |  k(y- \alpha)|^{\frac{p}{p-1}}\, {\rm d}y  \,{\rm d} \alpha 
\notag\\
&= \int_D |\nabla_yg(t-s,x,y)|^{\frac{p}{p-1}} \ \left(\int_D| k(y- \alpha)|^{\frac{p}{p-1}}\,{\rm d} \alpha\right)\, {\rm d}y  
\notag\\
&\le C\int_D|\nabla_yg(t-s,x,y)|^{\frac{p}{p-1}}\,{\rm d}y 
\qquad \text{by Lemma \ref{stimaBS} and Remark \ref{p_int_BS}}
\notag \\
&\le C_p (t-s)^{-\frac32\left(\frac{p}{p-1}\right)+1} \qquad \text{by \eqref{beta1}},
 \end{align}
obtaining 
\begin{align*}
\mathbb{E}\|I_2\|^{p}_{\mathcal{H}_T}
&\le C_{N ,p}\mathbb{E}\left[ \int_0^t (t-s)^{-\frac{p+2}{2p}}\|D\xi_{N}^{k}(s, \cdot)\|_{L^p(D;\mathcal{H}_T)} \, {\rm d}s \right]^{p}
\\
&\le C_{N,p} \left( \int_0^t (t-s)^{\frac{p+2}{2(1-p)}}\, {\rm d}s\right)^{p-1}\mathbb{E}\left[\int_0^t\|D\xi_{N}^{k}(s, \cdot)\|^{p}_{L^p(D;\mathcal{H}_T)}\, {\rm d}s \right]
\\
&\le C_{N,p}t^{\frac p2-2}\int_0^t\int_D\mathbb{E}\|D\xi_{N}^{k}(s, y)\|^{p}_{\mathcal{H}_T}\, {\rm d} y\, {\rm d}s,
\end{align*}
provided $p>4$.

For the last term $I_3$, using as above Minkowski's and H\"older's inequalities, we have
\begin{align*}
\mathbb{E}\|I_3\|^{p}_{\mathcal{H}_T}
&= \mathbb{E} \left \Vert\int_r^t\int_D \nabla_y g(t-s,x,y) \cdot \tilde q_N(\xi_N^k)(s,\cdot))(y)
\right.
\\
&\left. \qquad  p\|\xi_{N}^{k}(s, \cdot)\|^{1-p}_{L^p}\left(\int_D|\xi_{N}^{k}(s, \beta)|^{p-2}\xi_{N}^{k}(s, \beta)D\xi_{N}^{k}(s, \beta)\, {\rm d} \beta \right)\,{\rm d}y \, {\rm d} s \right \Vert_{\mathcal{H}_T}^{p}
\\
&\le \mathbb{E} \left[ 
\int_0^t \int_D |\nabla_yg(t-s,x,y)\cdot \tilde q_N(\xi_N^k)(s,\cdot))(y)| \right.
\\
&\left. \qquad \qquad p\|\xi_{N}^{k}(s, \cdot)\|^{1-p}_{L^p}\left(\int_D|\xi_{N}^{k}(s, \beta)|^{p-1}\|D\xi_{N}^{k}(s, \beta)\|_{\mathcal{H}_T}\, {\rm d} \beta \right)\,{\rm d}y \, {\rm d} s \right]^p
\\
&\le \mathbb{E} \left[ \int_0^t p\|\nabla_yg(t-s,x,\cdot) \cdot \tilde q_N(\xi_N^k(s, \cdot))\|_{L^1}\|D\xi_N^k(s, \cdot)\|_{L^p(D;\mathcal{H}_T)}\, {\rm d}s \right]^p.
\end{align*}
\eqref{beta1} and \eqref{stima_q_2} imply that 
\begin{align*}
\|\nabla_yg(t-s,x,\cdot) \cdot \tilde q_N(\xi_N^k(s, \cdot))\|_{L^1}
&\le \|\nabla_yg(t-s,x,\cdot)\|_{L^{\frac{p}{p-1}}} \|\tilde q_N(\xi_N^k(s, \cdot))\|_{L^p}
\\
&\le C_p (N+1)^2(t-s)^{-\frac{p+2}{2p}} \qquad \text{provided $p>4$}.
\end{align*}
Thanks to H\"older's inequality,
\begin{align*}
\mathbb{E}\|I_3\|^p_{\mathcal{H}_T} \le C_{N,p} \ \mathbb{E} \left[\int_0^t(t-s)^{-\frac{p+2}{2p}}\|D\xi_N^k(s, \cdot)\|_{L^p(D; \mathcal{H}_T)}\, {\rm d}s \right]^p
\\
\le C_{N,p} t^{\frac p2 - 2} \int_0^t \int_D \mathbb{E} \|D\xi_N^k(s,y)\|^p_{\mathcal{H}_T}\, {\rm d}y\, {\rm d}s
\end{align*}
provided $p>4$.

From the above estimates, if $p>4$, we infer
\begin{equation}
\label{5.15}
\mathbb{E} \|D\xi_{N}^{k+1}(t,x)\|^{p}_{\mathcal{H}_T} \le C_p +C_{N,T,p} \int_0^t \int_D \mathbb{E} \|D\xi_{N}^{k}(s,y)\|^{p}_{\mathcal{H}_T} \, {\rm d}y \,{\rm d}s.
\end{equation}
This proves that if $\xi_N^k(t,x) \in \mathbb{D}^{1,p}$, then $\xi_N^{k+1}(t,x) \in \mathbb{D}^{1,p}$. Moreover, iterating inequality \eqref{5.15} (which holds for every $(t, x) \in [0,T] \times D$ ) and proceeding as in the proof of Theorem \ref{conv_Lp}, we obtain \eqref{e5}. What remains to prove is equality \eqref{Malliavin_derivative}; but this is obtained by applying the operator $D$ to both members of equation \eqref{Walsh_truncated}.
\end{proof}

\subsection{Nondegeneracy condition}
\label{non_deg}
Now we check condition \eqref{44} for the solution $\xi_N$ to the truncated equation.
Let $t \in \left[0,T\right]$ and $x \in D$. We aim at proving that 
\begin{equation}
\label{58}
\|D\xi_N(t,x)\|^2_{\mathcal{H}_T} >0  \qquad \mathbb{P}-a.s.
\end{equation}
The following lemma is an improvement of Theorem \ref{D12} and it is needed in order to prove Theorem \ref{dens}.
We need to consider a time interval smaller than $\left[0,T\right]$ and consider the $\mathcal{H}_T$-norm of $\xi_N(t,x)$ on $(t- \varepsilon, t)$ for some $\varepsilon >0$ small enough. For every $\varphi \in \mathcal{H}_T$ we define the norm 
\begin{equation*}
\|\varphi\|_{\mathcal{H}_{(t-\varepsilon,t)}}:=\|\pmb{1}_{(t-\varepsilon,t)}(\cdot)\varphi\|_{\mathcal{H}_T}.
\end{equation*}  
It is straightforward to get
\begin{equation*}
\|\varphi\|_{\mathcal{H}_T} \ge \|\varphi\|_{\mathcal{H}_{(t-\varepsilon,t)}}.
\end{equation*}

\begin{lemma}
\label{D_eps}
Let $N\ge 1$, $b>1$ in \eqref{delta} and $p>4$. If $\xi_0$ is a continuous function on $D$, then there exists a constant $C_{N,p,Q,T}$ such that for every $0<\varepsilon<t$
\begin{equation*}
\sup_{\sigma \in \left[t- \varepsilon, t\right]} \sup_{x \in D} \mathbb{E}\|D\xi_N(\sigma, x)\|^p_{\mathcal{H}(t- \varepsilon, t)} \le C_{N,p,Q,T} \varepsilon^{\frac p2}.
\end{equation*}
\end{lemma}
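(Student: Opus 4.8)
The plan is to work directly from the equation \eqref{Malliavin_derivative} for $D_{r,z}\xi_N$ obtained in Theorem \ref{D12}, written with a running final time $\sigma\in[t-\varepsilon,t]$ in place of $t$, to take the $\mathcal{H}_{(t-\varepsilon,t)}$-norm of both sides, raise to the power $p$, take expectations, and close the resulting inequality by Gronwall's lemma. Introduce
\[
h(\sigma):=\sup_{x\in D}\mathbb{E}\,\|D\xi_N(\sigma,x)\|^p_{\mathcal{H}_{(t-\varepsilon,t)}},\qquad \sigma\in[t-\varepsilon,t].
\]
Since $\|\cdot\|_{\mathcal{H}_{(t-\varepsilon,t)}}\le\|\cdot\|_{\mathcal{H}_T}$, the bound \eqref{e5} (which transfers to the limit process $\xi_N$ by weak lower semicontinuity of the $\mathbb{D}^{1,p}$-norm along the convergence in Proposition \ref{p.3}) shows that $\sup_{\sigma\in[t-\varepsilon,t]}h(\sigma)<\infty$; this a priori finiteness is what makes the Gronwall argument legitimate.

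First I would estimate the free term. Restricting the time integral in the computation \eqref{conv_stoc} to $(t-\varepsilon,\sigma)$ and changing variables, $\|g(\sigma-\cdot,x,\cdot)\pmb 1_{[0,\sigma]}\|^2_{\mathcal{H}_{(t-\varepsilon,t)}}=\int_0^{\sigma-(t-\varepsilon)}\sum_{k\in\mathbb{Z}^2_0}|k|^{-2b}e^{-2|k|^2u}|e_k(x)|^2\,{\rm d}u$, and using $1-e^{-a}\le a$ together with $\sigma-(t-\varepsilon)\le\varepsilon$ this is bounded by $\frac{\varepsilon}{(2\pi)^2}\sum_{k\in\mathbb{Z}^2_0}|k|^{-2b}$. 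This is precisely the point where the hypothesis $b>1$ enters: the last series is ${\rm Tr}\,Q$, which converges exactly when $b>1$, so the free term is bounded by $C_Q\varepsilon^{p/2}$. (With only $b>0$ one would obtain a power $\varepsilon^{b}$ here, hence $\varepsilon^{bp/2}$, which is not enough.)

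Next I would treat the three nonlinear terms $I_1,I_2,I_3$ of \eqref{Malliavin_derivative} exactly as in the proof of Theorem \ref{D12}, with two changes: the outer norm is $\mathcal{H}_{(t-\varepsilon,t)}$, and every time integral now runs over $(t-\varepsilon,\sigma)\subset(t-\varepsilon,t)$. Thus the coefficients $v_N$, $\xi_N$, $\tilde q_N(\xi_N)$ appear at times where the truncation bounds \eqref{L_infty}, \eqref{stima_q_2} give constants $C_{N,p}$, while the Malliavin-derivative factors $D\xi_N(s,\cdot)$ occur at times $s\in[t-\varepsilon,\sigma]$, so that $\int_D\mathbb{E}\,\|D\xi_N(s,y)\|^p_{\mathcal{H}_{(t-\varepsilon,t)}}\,{\rm d}y\le|D|\,h(s)$. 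The $\varepsilon$-gain comes from the heat-kernel bounds \eqref{beta1}, \eqref{beta2}, \eqref{star2}: after the change of variable $u=\sigma-s$, the relevant $L^{p/(p-1)}$-in-space and in-time norms of $\nabla g$ over $(t-\varepsilon,\sigma)$ are at most $\big(C_p\,\varepsilon^{\frac{p-4}{2(p-1)}}\big)^{p-1}=C_p\,\varepsilon^{\frac{p}{2}-2}$ (the time exponent being finite precisely because $p>4$), and for the $I_2$ term one also uses that $\int_D|k(\cdot-\alpha)|^{p/(p-1)}{\rm d}\alpha$ is a bounded constant by Lemma \ref{stimaBS} and Remark \ref{p_int_BS}. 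This yields, for $i=1,2,3$,
\[
\mathbb{E}\,\|I_i\|^p_{\mathcal{H}_{(t-\varepsilon,t)}}\le C_{N,p}\,\varepsilon^{\frac{p}{2}-2}\int_{t-\varepsilon}^{\sigma}h(s)\,{\rm d}s.
\]

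Collecting the two contributions, $h(\sigma)\le C_p\,C_Q\,\varepsilon^{p/2}+C_{N,p}\,\varepsilon^{\frac{p}{2}-2}\int_{t-\varepsilon}^{\sigma}h(s)\,{\rm d}s$ for every $\sigma\in[t-\varepsilon,t]$, where the coefficient $C_{N,p}\,\varepsilon^{p/2-2}$ of the integral is a finite, $\sigma$-independent constant. Gronwall's lemma then gives $h(\sigma)\le C_p\,C_Q\,\varepsilon^{p/2}\exp\!\big(C_{N,p}\,\varepsilon^{\frac{p}{2}-2}(\sigma-(t-\varepsilon))\big)$; since $\sigma-(t-\varepsilon)\le\varepsilon$ and $\varepsilon<t\le T$, the exponent is $\le C_{N,p}\,T^{\frac{p}{2}-1}$, so the exponential is bounded by a constant depending only on $N,p,T$, and we conclude $\sup_{\sigma\in[t-\varepsilon,t]}\sup_{x\in D}\mathbb{E}\,\|D\xi_N(\sigma,x)\|^p_{\mathcal{H}_{(t-\varepsilon,t)}}\le C_{N,p,Q,T}\,\varepsilon^{p/2}$. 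The main point requiring care is the bookkeeping in $I_2$ and $I_3$: one must use Fubini to keep the Malliavin-derivative factor outermost (as in \eqref{star2}) so that only integrable powers of $\nabla g$ and of $k$ multiply $\|D\xi_N(s,\cdot)\|_{L^p(D;\mathcal{H}_{(t-\varepsilon,t)})}$, and to verify that every constant produced is independent of $\varepsilon$, the whole $\varepsilon$-dependence being displayed explicitly in the powers of $\varepsilon$ above.
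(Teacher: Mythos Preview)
Your proposal is correct and follows essentially the same route as the paper: take the $\mathcal{H}_{(t-\varepsilon,t)}$-norm of \eqref{Malliavin_derivative} at running time $\sigma$, bound the free heat-kernel term by $C_Q\varepsilon^{p/2}$ (this is exactly where $b>1$ is needed so that $\mathrm{Tr}\,Q<\infty$), estimate each $I_i$ as in Theorem~\ref{D12} with the time integrals restricted to $(t-\varepsilon,\sigma)$, and close by Gronwall. The only cosmetic difference is that the paper bounds the $\nabla g$ space--time integral over the full interval $[0,T]$, producing a Gronwall coefficient $C_{N,p,T}=C_{N,p}T^{p/2-2}$, whereas you keep the sharper $\varepsilon^{p/2-2}$; both choices yield a bounded exponential and the same conclusion.
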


\begin{proof}
For $t-\varepsilon \le \sigma \le t$, set $\eta^{\varepsilon}_N(\sigma, x)= \mathbb{E}\|D\xi_N(\sigma, x)\|^p_{\mathcal{H}_{(t-\varepsilon, \sigma)}}$.
According to \eqref{Malliavin_derivative},
\begin{equation*}
\eta^{\varepsilon}_N(\sigma, x)\le C_p \left( \|g(\sigma-\cdot,x, \cdot)\pmb{1}_{\left[0,\sigma\right]}(\cdot)\|^p_{\mathcal{H}_{(t-\varepsilon, \sigma)}}+ \sum_{i=1}^3 \mathbb{E}\|I_i\|^p_{\mathcal{H}_{(t-\varepsilon, \sigma)}}\right),
\end{equation*}
where the terms $I_i$, $i=1,2,3$, are defined in \eqref{I1}-\eqref{I3}.
By \eqref{conv_stoc} and the change of variables $s=r-\sigma+\varepsilon$, we get
\begin{align}
\label{60}
\int_{t- \varepsilon}^{\sigma}\|&g(\sigma-r, x, \cdot)\|^2_{L^2_Q}\, {\rm d}r
\le\sum_{k \in \mathbb{Z}_0^2}|k|^{-2b}\int_0^{\varepsilon} e^{-2|k|^2(\varepsilon-s)}|e_k(x)|^2 \, {\rm d}s
\notag\\
&=\frac{1}{(2\pi)^2}\sum_{k \in \mathbb{Z}_0^2} \frac{|k|^{-2b-2}}{2}(1-e^{-2|k|^2\varepsilon})
\le \frac{1}{(2 \pi)^2}\sum_{k \in \mathbb{Z}_0^2} \frac{|k|^{-2b-2}}{2}(2|k|^2\varepsilon)
\notag \\
&=\frac{\varepsilon}{(2\pi)^2} \sum_{k \in \mathbb{Z}_0^2}|k|^{-2b} 
= \frac{\varepsilon}{(2 \pi)^2}\  \text{Tr}Q,
\end{align}
which is finite provided $b>1$.
So
\begin{align}
\label{g_eps}
\mathbb{E} \| g(\sigma-\cdot,x,\cdot) {\pmb 1}_{[0,\sigma]}(\cdot) \|^p_{\mathcal{H}_{\left(t-\varepsilon,\sigma \right)}} 
&=\left(\int_{t- \varepsilon}^{\sigma}\|g(\sigma-r, x, \cdot)\|^2_{L^2_Q}\,{\rm d}r\right)^{\frac p2}
\notag\\
&\le\frac{ \left( \text{Tr} Q\right)^{\frac p2}\varepsilon^{\frac p2}}{(2 \pi)^p}=C_{p,Q}\ \varepsilon^{\frac p2}.
\end{align}

Minkowski's and H\"older's inequalities and \eqref{L_infty} imply that 
\begin{align*}
\mathbb{E}\|I_1\|^p_{\mathcal{H}(t- \varepsilon, \sigma)} 
&= \mathbb{E} \left[ \int_{t- \varepsilon}^{\sigma}\left\Vert\int_r^{\sigma}\int_D\nabla_yg(t-s,x,y) \cdot v_N(s,y)\right. \right.
\\
&\left. \left. \qquad \qquad \Theta_N(\|\xi_N(s, \cdot)\|_{L^p})D_{r,\cdot}\xi_N(s,y)\, {\rm d}y\, {\rm d}s
\right\Vert^2_{L^2_Q}\, {\rm d}r\right]^{\frac p2}
\\
&= \mathbb{E} \left[ \int_{t- \varepsilon}^{\sigma}\left\Vert\int_{t-\varepsilon}^{\sigma}\int_D\nabla_yg(t-s,x,y) \cdot v_N(s,y)\right. \right.
\\
&\left. \left. \qquad \qquad \Theta_N(\|\xi_N(s, \cdot)\|_{L^p})D_{r,\cdot}\xi_N(s,y)\, {\rm d}y\, {\rm d}s
\right\Vert^2_{L^2_Q}\, {\rm d}r\right]^{\frac p2}
\\
&\le \mathbb{E} \left[\int_{t-\varepsilon}^{\sigma}\int_D \left|\nabla_yg(t-s,x,y) \cdot v_N(s,y)\right| \right.
\\
& \left. \qquad \qquad\left| \Theta_N(\|\xi_N(s, \cdot)\|_{L^p})\right| \ \|D\xi_N(s,y)\|_{\mathcal{H}_{(t-\varepsilon,\sigma)}}\, {\rm d}y \, {\rm d}s\right]^p
\\
&\le C_N\left(\int_0^T \int_D|\nabla_yg(t-s,x,y)|^{\frac{p}{p-1}}\, {\rm d}y \, {\rm d}s\right)^{p-1} 
\\
&\qquad \qquad  \int_{t-\varepsilon}^{\sigma}\int_D \mathbb{E}\|D\xi_N(s,y)\|^p_{\mathcal{H}_{(t-\varepsilon, s)}}\, {\rm d}y \, {\rm d}s
\\
&\le C_N T^{\frac p2-2} \int_{t-\varepsilon}^{\sigma}\sup_{y \in D} \mathbb{E}\|D\xi_N(s,y)\|^p_{\mathcal{H}_{(t-\varepsilon, s)}}\, {\rm d}s
\qquad\text{ by \eqref{beta2} if } p>4.
\end{align*}
As regards the term $I_2$, proceeding in a similar way, by means of Fubini Theorem, H\"older's and Minkowski's inequalities we get
\begin{align*}
\mathbb{E}&\|I_2\|^p_{\mathcal{H}_{(t-\varepsilon,\sigma)}}
\\
&= \mathbb{E}\left[ \int_{t-\varepsilon}^{\sigma}\left\Vert \int_{t-\varepsilon}^{\sigma}\int_D\left( \nabla_yg(t-s,x,y)\cdot \int_Dk(y-\alpha)D_{r,\cdot}\xi_N(s,\alpha)\, {\rm d}\alpha\right)\right. \right.
\\
& \left. \left. \qquad \qquad \qquad \Theta_N(\|\xi_N(s, \cdot)\|_{L^p})\xi_N(s,y)\, {\rm d}y \, {\rm d}s\right\Vert^2_{L^2_Q}\, {\rm d}r\right]^{\frac p2}
\\
&\le \mathbb{E}\left[ \int_{t-\varepsilon}^{\sigma}\int_D\left|\int_D \nabla_yg(t-s,x,y)\cdot k(y-\alpha)\xi_N(s,y)\Theta_N(\|\xi_N(s, \cdot)\|_{L^p})\, {\rm d}y \right| \right.
\\
& \left. \qquad \qquad \qquad \|D\xi_N(s, \alpha)\|_{\mathcal{H}_{(t-\varepsilon, \sigma)}}\, {\rm d}\alpha\, {\rm d}s\right]^p
\\
&\le C_N\mathbb{E} \left[ \int_{t-\varepsilon}^{\sigma}\int_D\|\nabla_yg(t-s, x, \cdot) \cdot k(\cdot-\alpha)\|_{L^{\frac{p}{p-1}}}\ \|D\xi_N(s, \alpha)\|_{\mathcal{H}_{(t-\varepsilon, \sigma)}}\, {\rm d}\alpha\, {\rm d}s\right]^p
\\
&\le C_N \left( \int_0^T (t-s)^{\frac{p+2}{2(1-p)}}\, {\rm d}s \right)^{p-1} 
\\
& \qquad \qquad \qquad \int_{t-\varepsilon}^{\sigma}\int_D \mathbb{E} \|D\xi_N(s, y)\|^p_{\mathcal{H}_{(t-\varepsilon, s)}}\,  {\rm d}y\, {\rm d}s
\qquad \text{ by \eqref{star2} if } p>4
\\
&\le C_N T^{\frac p2 -2} \int_{t-\varepsilon}^{\sigma}\sup_{y \in D} \mathbb{E} \|D\xi_N(s, y)\|^p_{\mathcal{H}_{(t-\varepsilon, s)}}\, {\rm d}s.\end{align*}

For the last term $I_3$, Minkowski's any H\"older's inequalities imply that
\begin{align*}
\mathbb{E}&\|I_3\|^p_{\mathcal{H}_{(t-\varepsilon, \sigma)}} 
=\mathbb{E} \left[
\int_{t - \varepsilon}^{\sigma} 
\left\Vert p\int_{t-\varepsilon}^{\sigma} \int_D \nabla_yg(t-s,x,y)\cdot \tilde q_N(\xi_N(s, \cdot))(y)\|\xi_N(s, \cdot)\|^{1-p}_{L^p} \right. \right.
\\
&\left. \left.\qquad \qquad \qquad \left( \int_D |\xi_N(s, \beta)|^{p-2}\xi_N(s, \beta)D_{r, \cdot}\xi_N(s, \beta)\, {\rm d}\beta\right)\, {\rm d}y \, {\rm d}s
\right\Vert^2_{L^2_Q}\, {\rm d}r
\right]^{\frac p2}
\\
& \le \mathbb{E} \left[ 
\int_{t-\varepsilon}^{\sigma}\int_D p |\nabla_yg(t-s,x,y)\cdot \tilde q_N(\xi_N(s, \cdot))(y)| \ \|\xi_N(s, \cdot)\|^{1-p}_{L^p} \right.
\\
& \left. \qquad \qquad \qquad \left(\int_D |\xi_N(s, \beta)|^{p-1} \|D\xi_N(s, \beta)\|_{\mathcal{H}_{(t-\varepsilon,\sigma)}}\, {\rm d}\beta
\right)\, {\rm d}y \, {\rm d}s 
\right]^p
\\
&\le C_{N,p}\mathbb{E}\left[\int_{t-\varepsilon}^{\sigma}p\|\nabla_yg(t-s,x,\cdot)\|_{L^{\frac{p}{p-1}}} \ \|D\xi_N(s, \cdot)\|_{L^p(D;\mathcal{H}_{(t-\varepsilon, \sigma)})} \right]^p\ \text{ by \eqref{stima_q_2}}
\\
&\le C_{N,p}\left(\int_0^T \int_D|\nabla_yg(t-s,x,y)|^{\frac{p}{p-1}}\, {\rm d}y \, {\rm d}s\right)^{p-1} \\
& \qquad \qquad \qquad \int_{t-\varepsilon}^{\sigma}\int_D \mathbb{E}\|D\xi_N(s,y)\|^p_{\mathcal{H}_{(t-\varepsilon, s)}}\, {\rm d}y \, {\rm d}s
\\
&\le C_{N,p} T^{\frac p2-2} \int_{t-\varepsilon}^{\sigma}\sup_{y \in D} \mathbb{E}\|D\xi_N(s,y)\|^p_{\mathcal{H}_{(t-\varepsilon, s)}} \, {\rm d}s
\qquad\text{ by \eqref{beta2} if } p>4.
\end{align*}
Collecting the above estimates we get 
\begin{align*}
\sup_{x \in D}\eta^{\varepsilon}_N(\sigma, x) \le C_{p,Q} \varepsilon^{\frac p2} + C_{N,p,T} \int_{t-\varepsilon}^{\sigma}\sup_{y \in D}\eta_N^{\varepsilon}(s,y)\, {\rm d}s \qquad   \text{for every}\ \sigma \in \left[t-\varepsilon,t\right].
\end{align*}
By the Gronwall's lemma it follows 
\begin{equation*}
\sup_{x \in D}\eta^{\varepsilon}_N(\sigma, x) \le C_{p,Q} \varepsilon^{\frac p2} e^{C_{N,p,T}(\sigma-t+\varepsilon)} 
\le C_{p,Q} \varepsilon^{\frac p2} e^{C_{N,p,T}T} \qquad \text{for every}\ \sigma \in \left[t-\varepsilon,t\right].
\end{equation*} 
Since for $\sigma \in \left[t-\varepsilon,t\right]$, $\|D\xi_N(\sigma,x)\|^p_{\mathcal{H}_{(t-\varepsilon, \sigma)}}=\|D\xi_N(\sigma,x)\|^p_{\mathcal{H}_{(t-\varepsilon, t)}}$ we finally get
\begin{equation*}
\sup_{\sigma \in \left[t- \varepsilon, t\right]} \sup_{x \in D} \mathbb{E}\|D\xi_N(\sigma, x)\|^p_{\mathcal{H}_{(t- \varepsilon, t)}} \le C_{N,p,Q,T} \varepsilon^{\frac p2}.
\end{equation*}

\end{proof}

\begin{theorem}
\label{dens}
Suppose $b>1$ in \eqref{delta} and assume that $\xi_0$ is a continuous function on $D$. Then, for every $t \in \left[0,T\right]$ and $x \in D$, the image law of the random variable $\xi_N(t,x)$ is absolutely continuous with respect to the Lebesgue measure on $\mathbb{R}$.
\end{theorem}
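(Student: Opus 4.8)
The plan is to apply Theorem~\ref{theo:dens} with $F=\xi_N(t,x)$. By Theorem~\ref{D12} we already know that $\xi_N(t,x)\in\mathbb{D}^{1,p}\subset\mathbb{D}^{1,1}\subset\mathbb{D}^{1,1}_{\mathrm{loc}}(\mathcal{H}_T)$ for every $p>4$ (the first inclusion because $\mathbb{P}$ is a probability measure, the second trivially with $\Omega_n=\Omega$), so the only thing left to verify is the nondegeneracy condition \eqref{44}, i.e. \eqref{58}: $\|D\xi_N(t,x)\|^2_{\mathcal{H}_T}>0$ $\mathbb{P}$-a.s. We fix $t\in(0,T]$ and $x\in D$ (at $t=0$ there is nothing to prove, since $\xi_N(0,\cdot)=\xi_0$ is deterministic; the argument requires $0<\varepsilon<t$), and we introduce a small parameter $0<\varepsilon<t$ to be sent to $0$ at the end.

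The key idea is that near the final time $t$ the Malliavin derivative of $\xi_N(t,x)$ is dominated by its heat-kernel part $g(t-\cdot,x,\cdot)\pmb{1}_{[0,t]}$. Using $\|D\xi_N(t,x)\|_{\mathcal{H}_T}\ge\|D\xi_N(t,x)\|_{\mathcal{H}_{(t-\varepsilon,t)}}$ together with the representation \eqref{Malliavin_derivative}, namely $D_{r,z}\xi_N(t,x)=g(t-r,x,z)\pmb{1}_{[0,t]}(r)+I_1(r,z)+I_2(r,z)+I_3(r,z)$ (where $I_1,I_2,I_3$ denote the three integrals on the right-hand side of \eqref{Malliavin_derivative}, i.e. the notation of \eqref{I1}--\eqref{I3} with $\xi_N^k$ replaced by $\xi_N$, exactly as in the proof of Lemma~\ref{D_eps}), the triangle inequality gives
\[
\|D\xi_N(t,x)\|_{\mathcal{H}_{(t-\varepsilon,t)}}\ \ge\ \|g(t-\cdot,x,\cdot)\pmb{1}_{[0,t]}\|_{\mathcal{H}_{(t-\varepsilon,t)}}-\sum_{i=1}^3\|I_i\|_{\mathcal{H}_{(t-\varepsilon,t)}}.
\]
For the first term I would redo the computation \eqref{conv_stoc} but over the interval $(t-\varepsilon,t)$, retaining just one Fourier mode with $|k|=1$ and using $1-e^{-2\varepsilon}\ge\varepsilon$ for $\varepsilon\le\tfrac12$, to obtain a \emph{deterministic} lower bound $\|g(t-\cdot,x,\cdot)\pmb{1}_{[0,t]}\|^2_{\mathcal{H}_{(t-\varepsilon,t)}}\ge c\,\varepsilon$ with $c=\tfrac1{2(2\pi)^2}$, uniform in $x$ (here it is essential that $|e_k(x)|=\tfrac1{2\pi}$ is constant on the torus). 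For the remainder I would use directly the estimates on $I_1,I_2,I_3$ already established inside the proof of Lemma~\ref{D_eps}, each of which bounds $\mathbb{E}\|I_i\|^p_{\mathcal{H}_{(t-\varepsilon,t)}}$ by $C\int_{t-\varepsilon}^{t}\sup_{y\in D}\mathbb{E}\|D\xi_N(s,y)\|^p_{\mathcal{H}_{(t-\varepsilon,s)}}\,{\rm d}s$, and then insert the conclusion of Lemma~\ref{D_eps} itself to get $\mathbb{E}\|I_i\|^p_{\mathcal{H}_{(t-\varepsilon,t)}}\le C_{N,p,Q,T}\,\varepsilon^{\frac p2+1}$ for $p>4$.

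To finish, observe that on the event $\{\sum_{i=1}^3\|I_i\|_{\mathcal{H}_{(t-\varepsilon,t)}}<\tfrac12\sqrt{c\varepsilon}\}$ one has $\|D\xi_N(t,x)\|_{\mathcal{H}_T}\ge\tfrac12\sqrt{c\varepsilon}>0$, hence by Chebyshev's inequality and the moment bound above,
\[
\mathbb{P}\big(\|D\xi_N(t,x)\|_{\mathcal{H}_T}=0\big)\ \le\ \sum_{i=1}^3\mathbb{P}\Big(\|I_i\|_{\mathcal{H}_{(t-\varepsilon,t)}}\ge\tfrac16\sqrt{c\varepsilon}\Big)\ \le\ \sum_{i=1}^3\frac{\mathbb{E}\|I_i\|^p_{\mathcal{H}_{(t-\varepsilon,t)}}}{(c\varepsilon/36)^{p/2}}\ \le\ C\,\varepsilon.
\]
Letting $\varepsilon\to0^+$ yields $\|D\xi_N(t,x)\|_{\mathcal{H}_T}>0$ $\mathbb{P}$-a.s., that is \eqref{58}, and Theorem~\ref{theo:dens} then gives the absolute continuity of the law of $\xi_N(t,x)$. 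The substance of the argument is thus entirely contained in Lemma~\ref{D_eps} and in particular in the rate $\varepsilon^{p/2}$ it provides: this is precisely what forces the strengthened hypothesis $b>1$, needed so that $\mathrm{Tr}\,Q=\sum_{k\in\mathbb{Z}_0^2}|k|^{-2b}<\infty$ (see \eqref{60}). Once Lemma~\ref{D_eps} is in hand I expect no serious obstacle beyond carefully tracking the constants and the $p>4$ restrictions inherited from the heat-kernel gradient estimates \eqref{beta1}--\eqref{beta2}.
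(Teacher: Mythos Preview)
Your proposal is correct and follows essentially the same route as the paper: restrict to the window $(t-\varepsilon,t)$, lower-bound the heat-kernel part of $\|D\xi_N(t,x)\|_{\mathcal{H}_{(t-\varepsilon,t)}}$ by $c\sqrt{\varepsilon}$, control the remainder $I_1+I_2+I_3$ via Lemma~\ref{D_eps}, and conclude by Chebyshev as $\varepsilon\to0$. The only cosmetic differences are that the paper uses the inequality $(a+b)^2\ge\tfrac12 a^2-b^2$ and an auxiliary parameter $\delta$, and it re-derives the $I_i$ bounds over $[t-\varepsilon,t]$ to obtain the power $\varepsilon^{p-2}$ instead of your $\varepsilon^{p/2+1}$; either exponent suffices for $p>4$, and your direct reuse of the intermediate estimates from Lemma~\ref{D_eps} is a legitimate shortcut.
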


\begin{proof}
In order to prove that 
$\|D \xi_N(t,x)\|^2_{\mathcal{H}_T} >0 \qquad \mathbb{P}-a.s.$ we will show that
$$\mathbb{P}(\|D\xi_N(t,x)\|^2_{\mathcal{H}_T}=0)=0,$$
or, better, that
\begin{equation}
\mathbb{P}(\|D\xi_N(t,x)\|^2_{\mathcal{H}_T}< \delta)\rightarrow 0 \qquad \text{as} \ \ \delta \rightarrow 0.
\end{equation}
Let us fix $\varepsilon >0$ sufficiently small, according to \eqref{Malliavin_derivative}, by means of the inequality $(a+b)^2 \ge \frac12 a^2-b^2$, we get
\begin{align*}
\|D \xi_N(t,x)\|^2_{\mathcal{H}_T} 
&= \int_0^T\|D_{r, \cdot} \xi_N(t,x)\|^2_{L^2_Q}\, {\rm d}r  
\ge \int_{t- \varepsilon}^t\|D_{r, \cdot} \xi_N(t,x)\|^2_{L^2_Q}\, {\rm d}r  
\\
&=\int_{t- \varepsilon}^t \left\Vert g(t-r, x,\cdot) \pmb{1}_{\left[0,t\right]}(r) +\sum_{i=1}^3I_i(r,\cdot) \right\Vert_{L^2_Q}^2 \, {\rm d}r  
 \\
 &\ge \frac 12 \int_{t- \varepsilon}^t \| g(t-r, x,\cdot) \|^2_{L^2_Q} \, {\rm d}r - \int_{t-\varepsilon}^t \left\Vert \sum_{i=1}^3I_i(r,\cdot)\right\Vert_{L^2_Q}^2 \, {\rm d}r,
\end{align*}
where the terms $I_i$ are defined in \eqref{I1}-\eqref{I3}.
Let us set for simplicity
\begin{equation*}
I(t,x, \varepsilon)=\int_{t-\varepsilon}^t \left\Vert \sum_{i=1}^3I_i(r,\cdot) \right\Vert_{L^2_Q}^2 \, {\rm d}r,
 \qquad 
A(x, \varepsilon)= \int_{t- \varepsilon}^t \| g(t-r, x,\cdot) \|^2_{L^2_Q} \, {\rm d}r.
\end{equation*}
By means of Chebyschev's inequality, for $\delta>0$ sufficiently small, 
we have
\begin{equation}
\label{Chebyc}
\mathbb{P}(\|D\xi_N(t,x)\|^2_{\mathcal{H}_T}< \delta) \le \mathbb{P}(I(t,x,\varepsilon) \ge \frac 12 A(x, \varepsilon)-\delta) \le \frac{\mathbb{E}|I(t,x,\varepsilon)|^{\frac p2}}{\left(\frac12A(x, \varepsilon)- \delta\right)^{\frac p2}}.
\end{equation}
Let us find an upper estimate for $\mathbb{E}|I(t,x,\varepsilon)|^{\frac p2} \le C_p \sum_{i=1}^3 \mathbb{E} \left|\int_{t-\varepsilon}^t \|I_i(r, \cdot)\|^2_{L^2_Q}\, {\rm d}r\right|^{\frac p2}$.

Minkowski's and H\"older's inequalities and \eqref{L_infty} imply that 
\begin{align*}
\mathbb{E}&
\left | \int_{t-\varepsilon}^t \|I_1(r, \cdot)\|^2_{L^2_Q}\, {\rm d}r \right |^{\frac p2}
\\
&= \mathbb{E} \left[ \int_{t- \varepsilon}^{t}\left\Vert\int_{t-\varepsilon}^{t}\int_D\nabla_yg(t-s,x,y) \cdot v_N(s,y) \right. \right.
\\
&\left. \left. \qquad \qquad \qquad \Theta_N(\|\xi_N(s, \cdot)\|_{L^p})D_{r,\cdot}\xi_N(s,y)\, {\rm d}y\, {\rm d}s
\right\Vert^2_{L^2_Q}\, {\rm d}r\right]^{\frac p2}
\\
&\le C_N\left(\int_{t-\varepsilon}^t \int_D|\nabla_yg(t-s,x,y)|^{\frac{p}{p-1}}\, {\rm d}y \, {\rm d}s\right)^{p-1}  \int_{t-\varepsilon}^{t}\int_D \mathbb{E}\|D\xi_N(s,y)\|^p_{\mathcal{H}_{(t-\varepsilon, t)}}\, {\rm d}y \, {\rm d}s
\end{align*}
Using Lemma \ref{D_eps} with $t-\varepsilon \le s \le t$ and \eqref{beta2}, provided $p>4$, we deduce that
\begin{align*}
\mathbb{E}
\left | \int_{t-\varepsilon}^t \|I_1(r, \cdot)\|^2_{L^2_Q}\, {\rm d}r \right |^{\frac p2}
\le C_{N,p,Q,T} \varepsilon^{\frac p2-2} \varepsilon^{\frac p2}= C_{N,p,Q,T} \varepsilon^{p-2} .
\end{align*}
For the term $I_2$, by means of Fubini Theorem, H\"older's and Minkowski's inequalities and by \eqref{star2}, provided $p>4$, we get
\begin{align*}
\mathbb{E}&
\left | \int_{t-\varepsilon}^t \|I_2(r, \cdot)\|^2_{L^2_Q}\, {\rm d}r \right |^{\frac p2}\\
&= \mathbb{E}\left[ \int_{t-\varepsilon}^t\left\Vert \int_{t-\varepsilon}^t\int_D\left( \nabla_yg(t-s,x,y)\cdot \int_Dk(y-\alpha)D_{r,\cdot}\xi_N(s,\alpha)\, {\rm d}\alpha\right)\right. \right.
\\
&\left. \left. \qquad \qquad \qquad \Theta_N(\|\xi_N(s, \cdot)\|_{L^p})\xi_N(s,y)\, {\rm d}y \, {\rm d}s\right\Vert^2_{L^2_Q}\, {\rm d}r\right]^{\frac p2}
\\
&\le C_N \left( \int_{t-\varepsilon}^t (t-s)^{\frac{p+2}{2(1-p)}}\, {\rm d}s \right)^{p-1} \ \int_{t-\varepsilon}^t\int_D \mathbb{E} \|D\xi_N(s, y)\|^p_{\mathcal{H}_{(t-\varepsilon, t)}}\,  {\rm d}y\, {\rm d}s
\\
&\le C_N \varepsilon^{\frac p2 -2} C_{N,p,Q,T} \varepsilon^p=C_{N,p,Q,T} \varepsilon^{p-2} \qquad \text{by Lemma \ref{D_eps}}.
\end{align*}
As regards the last term $I_3$, Minkowski's and H\"older's inequalities and \eqref{stima_q_2} imply that 
 
\begin{align*}
\mathbb{E}&
\left | \int_{t-\varepsilon}^t \|I_3(r, \cdot)\|^2_{L^2_Q}\, {\rm d}r \right |^{\frac p2}
\\ 
 &=\mathbb{E} \left[
\int_{t - \varepsilon}^t 
\left\Vert p\int_{t-\varepsilon}^t \int_D \nabla_yg(t-s,x,y)\cdot \tilde q_N(\xi_N(s, \cdot))(y)\|\xi_N(s, \cdot)\|^{1-p}_{L^p} \right. \right.
\\
&\left. \left.\qquad \qquad  \left( \int_D |\xi_N(s, \beta)|^{p-2}\xi_N(s, \beta)D_{r, \cdot}\xi_N(s, \beta)\, {\rm d}\beta\right)\, {\rm d}y \, {\rm d}s
\right\Vert^2_{L^2_Q}\, {\rm d}r
\right]^{\frac p2}
\\
&\le C_{N,p}\left(\int_{t-\varepsilon}^t \int_D|\nabla_yg(t-s,x,y)|^{\frac{p}{p-1}}\, {\rm d}y \, {\rm d}s\right)^{p-1} 
\\
& \qquad \qquad  \int_{t-\varepsilon}^t\int_D \mathbb{E}\|D\xi_N(s,y)\|^p_{\mathcal{H}_{(t-\varepsilon, t)}}\, {\rm d}y \, {\rm d}s
\\
&\le C_{N,p} \varepsilon^{\frac p2-2} C_{N,p,Q,T} \varepsilon^p
=C_{N,p,Q,T} \varepsilon^{p-2}\qquad\text{ by Lemma \ref{D_eps}} \ \text{and \eqref{beta2} if } p>4.
\end{align*}
In conclusion, collecting all the above estimates, we get 

\begin{equation}
\label{I}
\mathbb{E}\left| I(t,x, \varepsilon)\right|^{\frac p2} \le C_{N,p,Q,T} \varepsilon^{p-2},
\end{equation}
provided $p>4$.
We now need to find a lower estimate for $A(x, \varepsilon)$. Proceeding as in \eqref{conv_stoc} we have
\begin{align*}
\int_{t- \varepsilon}^t \| g(t-r, x,\cdot) \|^2_{L^2_Q} \, {\rm d}r
= \sum_{k \in \mathbb{Z}^2_0}|k|^{-2b}|e_k(x)|^2 \ \frac{1}{2|k|^2}(1-e^{-2|k|^2 \varepsilon}).
\end{align*}
The inequality
\begin{align*}
1-e^{-2|k|^2 \varepsilon} \ge \frac{2 \varepsilon |k|^2}{1+2 \varepsilon |k|^2} \ge \frac{2 \varepsilon |k|^2}{1+2T |k|^2} 
\end{align*}
implies that
\begin{align*}
\int_{t- \varepsilon}^t \| g(t-r, x,\cdot) \|^2_{L^2_Q} \, {\rm d}r& 
\ge \frac{\varepsilon}{(2\pi)^2}\sum_{k \in \mathbb{Z}^2_0}\frac{|k|^{-2b}}{1+2T|k|^2}
\end{align*}
and the above series is well defined and can be bounded from below by any of its summand, such as the one corresponding to $k=(0,1)\in \mathbb{Z}_0^2$: 
\begin{align}
\label{A}
\int_{t- \varepsilon}^t \| g(t-r, x,\cdot) \|^2_{L^2_Q} \, {\rm d}r& \ge \frac{\varepsilon}{(2\pi)^2(1+2T)}= C_{T} \ \varepsilon.
\end{align}
Using estimates \eqref{I} and \eqref{A} and substituting into \eqref{Chebyc} we get
\begin{equation*}
\mathbb{P}(\|D\xi_N(t,x)\|^2_{\mathcal{H}_T}< \delta) \le \left( \frac{C_T}{2} \varepsilon - \delta\right)^{-\frac p2} C_{N,p,Q,T}\ \varepsilon^{p-2}. \end{equation*}
Thus, if we choose $\varepsilon= \varepsilon( \delta, T)$ sufficiently small in such a way that  $\frac{C_T}{2}\varepsilon =2 \delta$ we get
\begin{equation*}
\mathbb{P}(\|D\xi_N(t,x)\|^2_{\mathcal{H}_T}< \delta) \le C_{N,T,Q,p} 
\delta^{-\frac p2}\delta^{p-2}= C_{N,T,Q,p} \ \delta^{\frac p2-2} \rightarrow 0 \qquad \text{for} \ \delta \rightarrow 0,
\end{equation*}
since $p>4$.
\end{proof}

\subsection{Existence of the density}
\label{density_subsec}
Now we are ready to prove the main result, Theorem \ref{density_thm}.

\begin{proof}[Proof of Theorem \ref{density_thm}]
Let us fix $N \ge 1$ and $p>4$ and let us define
\begin{equation}
\Omega_N := \left\{\omega \in \Omega: \sup_{t \in \left[0,T\right]}\|\xi(t, \cdot, \omega)\|_{L^{p}(D)} \le N \right\}.
\end{equation}
Then we have $\xi(t,x) \equiv \xi_N(t,x)$ on $\Omega_N$ for every $t\in \left[0,T\right]$ and $x \in D$ and $\lim_{N \rightarrow +\infty} \mathbb{P}(\Omega_N=\Omega) =1$.
In fact we can write
$$\Omega_N= \left\{ \sigma_N=T\right\},$$
where $\sigma_N$ is the stopping time defined in \eqref{sigma_N}.
So we have that, for $N \rightarrow \infty$, $\sup_{N\ge1} \sigma_N =T$ $\mathbb{P}$-a.s. i.e. $\Omega_N \uparrow \Omega$ $\mathbb{P}$-a.s.

It follows then that, for every $(t,x) \in \left[0,T\right] \times D)$, the sequence $(\Omega_N, \xi_N(t,x))$ localizes $\xi(t,x)$ in $\mathbb{D}^{1,p}$.
The result follows by Theorem \ref{dens}: in fact it suffices to show property \eqref{44} on the set $\{t < \sigma_N\}$ for every $N \ge 1$, namely to show \eqref{58}.
\end{proof}

\appendix
\section{Proof of Theorem \ref{beta_gradient}}
\label{Appendix}
For the estimate of the heat kernel and its gradient we use the explicit expression given by \eqref {images}. We factorize the two-dimensional kernel into the one dimensional components. We then proceed following the idea of \cite[Lemma 2.1]{Morien1999}.

\begin{align*}
g(t,x,y)&=\frac{1}{4 \pi t}  \sum_{k \in \mathbb{Z}^2}e^{-\frac{|x-y+2 \pi k|^2}{4t}}
\\
&=\left(\frac{1}{\sqrt{4 \pi t}} \sum_{k_1\in \mathbb{Z}}e^{\frac{-|x_1-y_1+2 \pi k_1|^2}{4t}} \right)\left( \frac{1}{\sqrt{4 \pi t}} \sum_{k_2\in \mathbb{Z}}e^{\frac{-|x_2-y_2+2 \pi k_2|^2}{4t}}\right).
\end{align*}
Let us set, for $i=1,2$
\begin{equation*}
g_i(t,x_i,y_i)=\frac{1}{\sqrt{4 \pi t}} \sum_{k_i\in \mathbb{Z}}e^{\frac{-|x_i-y_i+2 \pi k_i|^2}{4t}},
\end{equation*}
then 
\begin{equation*}
g(t,x,y)=g_1(t,x_1, y_1)g_2(t,x_2,y_2).
\end{equation*}
For the one-dimensional heat kernel the following decomposition holds:
\begin{equation*}
g_i(t,x_i,y_i)=H^1_i(t, x_i,y_i)+H^2_i(t, x_i,y_i)+H^3_i(t, x_i,y_i)+\bar g_i(t, x_i,y_i)
\end{equation*}
where
 \begin{align*}
 H^1_i(t, x_i,y_i)=\frac{1}{\sqrt{4 \pi t}} & e^{\frac{-|x_i-y_i|^2}{4t}} ,
  \quad 
 H^2_i(t, x_i,y_i)=\frac{1}{\sqrt{4 \pi t}} e^{\frac{-|x_i-y_i+2 \pi |^2}{4t}} , 
 \\
 &H^3_i(t, x_i,y_i)=\frac{1}{\sqrt{4 \pi t}} e^{\frac{-|x_i-y_i-2 \pi|^2}{4t}}
\end{align*}
and 
\begin{equation}
\label{smooth}
(t, x_i,y_i) \rightarrow \bar g_i(t,x_i,y_i) \in C^{\infty}(\left[0,T\right] \times \mathbb{R}^2).
\end{equation}
Then we can rewrite the two dimensional heat kernel as follows
\begin{multline*}
g(t,x,y)=\left(H^1_1(t, x_1,y_1)+H^2_1(t, x_1,y_1)+H^3_1(t, x_1,y_1)+\bar g_1(t, x_1,y_1) \right) \cdot 
\\
\left(H^1_2(t, x_2,y_2)+H^2_2(t, x_2,y_2)+H^3_2(t, x_2,y_2)+\bar g_2(t, x_2,y_2) \right).
\end{multline*}
We are interested in estimating the heat kernel and its gradient, more precisely in estimates of the following type:
\begin{equation}
\label{integrals}
\int_0^t\int_{D}| g(s,x,y)|^{\beta} {\rm d}y {\rm d}s,
\qquad \qquad 
\int_0^t\int_{D}| \nabla_yg(s,x,y)|^{\beta} {\rm d}y {\rm d}s,
\end{equation}
for $t>0$ and a suitable $\beta>0$.

\begin{remark}
\label{rem_H}
Let us notice that the terms of the form $H_1^k\bar g_2$  and $H_2^k\bar g_1$ with $k=1,2,3$ do not give any problems. 
In fact let us consider for example the case $H^1_1 \bar g_2$ (the others are similar).
We have
\begin{align*}
&|\nabla_y  (H_1^1\bar g_2)|^{\beta}
=\left(|\nabla_y (H_1^1\bar g_2)|^2\right)^{\frac{\beta}{2}}
\\
&\le C_{\beta} \left(\frac{(2|x_1-y_1|)^{\beta}}{\pi^{\frac{\beta}{2}}(4 t)^{\frac{3\beta}{2}}} e^{-\frac{\beta|x_1-y_1|^2}{4t}}\left|\bar g_2(t, x_2, y_2)\right|^{\beta} 
+\frac{1}{(4 \pi t)^{\frac{\beta}{2}}} e^{-\frac{\beta|x_1-y_1|^2}{4t}}\left|\frac{\partial}{\partial y_2}\bar g_2(t, x_2, y_2)\right|^{\beta}\right)
\\
&\le C_{\beta}\frac{|x_1-y_1|^{\beta}}{t^{\frac{3\beta}{2}}} e^{-\frac{\beta|x_1-y_1|^2}{4t}}\left|\bar g_2(t, x_2, y_2)\right|^{\beta} 
+\frac{C_{\beta}}{t^{\frac{\beta}{2}}} e^{-\frac{\beta|x_1-y_1|^2}{4t}}\left|\frac{\partial}{\partial y_2}\bar g_2(t, x_2, y_2)\right|^{\beta}.
\end{align*}
Then, using the following identity
\begin{equation}
\label{gaussiana}
\int_{\mathbb{R}}|z|^r e^{-\frac{z^2}{\sigma^2}} \, {\rm d}z = C_r \sigma^{r+1}
\end{equation}
we get
\begin{align*}
\int_0^t\int_D&|\nabla_y(H^1_1\bar g_2)(s,x,y)|^{\beta} {\rm d}y \,{\rm d}s 
\\
&\le C_{\beta}\int_0^t \int_0^{2 \pi} \left(\int_0^{2 \pi}\frac{|x_1-y_1|^{\beta}}{s^{\frac{3\beta}{2}}} e^{-\frac{\beta|x_1-y_1|^2}{4s}}\, {\rm d}y_1\right)\left|\bar g_2(s, x_2, y_2)\right|^{\beta} {\rm d}y_2 {\rm d}s
\\
&+C_{\beta}\int_0^t \frac{1}{s^{\frac {\beta}{2}}}\int_0^{2 \pi}\left( \int_0^{2\pi}e^{-\frac{\beta|x_1-y_1|^2}{4s}}\, {\rm d}y_1\right)
\left|\frac{\partial}{\partial y_2} \bar g_2(s,x_2,y_2)\right|^{\beta}\, {\rm d}y_2 \, {\rm d}s
\\
&\le C_{\beta}\int_0^t \int_0^{2 \pi} s^{\frac12- \beta} \ \left|\bar g_2(s, x_2, y_2)\right|^{\beta}\, {\rm d}y_2\, {\rm d }s 
\\
&\qquad \qquad + C_{\beta}\int_0^t \int_0^{2 \pi} s^{\frac{1- \beta}{2}}\left|\frac{\partial}{\partial y_2}\bar g_2(s, x_2, y_2) \right|^{\beta}\,{\rm d}y_2 {\rm d}s
\end{align*}
and we have the convergence of the integrals thanks to \eqref{smooth}, when $\beta < \frac 32$.
\end{remark}

By Remark \ref{rem_H} it follows that the behavior of integrals in \eqref{integrals} is determined by the corresponding integrals with $H_1^k H_2^l$ with $k,l =1,2,3$, instead of $g$.
Since computations are similar we do all the required estimates only for the case $H(t,x,y):=H^1_1(t,x_1,y_1)H^1_2(t,x_2,y_2)$. We have
\begin{equation*}
|\nabla_yH(t,x,y)|^{\beta}= \frac{e^{- \frac{\beta|x-y|^2}{4t}}|x-y|^{\beta}}{(8 \pi)^{\beta}t^{2\beta}},
\end{equation*}
so we recover
\begin{align*}
 \int_D&|\nabla_yH(s,x,y)|^{\beta}{\rm d}y
=\int_D\frac{e^{- \frac{\beta|x-y|^2}{4s}}|x-y|^{\beta}}{(8 \pi)^{\beta}s^{2\beta}}{\rm d}y 
\\
& \le C_{\beta}\int_{\mathbb{R}^2}\frac{e^{- \frac{\beta|z|^2}{4s}}|z|^{\beta}}{s^{2\beta}}{\rm d}z 
=  C_{\beta}\int_0^{2 \pi}\int_0^{\infty}\frac{e^{- \frac{\beta\rho^2}{4s}}\rho^{\beta+1}}{s^{2\beta}}\,{\rm d}\rho\,{\rm d} \phi
\\
& \le C_{\beta} \frac{1}{s^{2 \beta}} \int_0^{\infty}\rho^{\beta+1}e^{-\frac{\beta \rho^2}{4s}} \, {\rm d} \rho.
\end{align*}
Using now identity \eqref{gaussiana} we get
\begin{align*}
 \int_D&|\nabla_yH(s,x,y)|^{\beta}{\rm d}y \,{\rm d}s
\le C_{\beta}  s^{-\frac{3 \beta}{2}+1}.
\end{align*}
Calculating the time integral we obtain, 
\begin{align}
\label{09_12}
\int_0^t \int_D&|\nabla_yH(s,x,y)|^{\beta}{\rm d}y \,{\rm d}s
\le C_{\beta} \int_0^t s^{-\frac{3 \beta}{2}+1} {\rm d}s
\le C_{\beta}t^{-\frac {3\beta}{2}+2},
\end{align}
which converges provided $ \beta < \frac 43$.
\begin{remark}
Notice that estimate \eqref{09_12} is uniform in $x$.
\end{remark}
For estimates \eqref{beta_kernel_0} and \eqref{beta_kernel} we proceed in a similar way. 
Also in this case we do all the required estimates for $H(t,x,y):=H^1_1(t,x_1,y_1)H^1_2(t,x_2,y_2)$. By means of \eqref{gaussiana} we get
\begin{align*}
 \int_D |H(s,x,y)|^{\beta}\,{\rm d}y
& = \int_D\frac{1}{(4 \pi s)^{\beta}}e^{-\frac{\beta|x-y|^2}{4s}}\, {\rm d}y
\le C_{\beta} \int_{\mathbb{R}^2}\frac{1}{s^{\beta}}e^{-\frac{\beta|z|^2}{4s}}\, {\rm d}z
\\
& = 2 \pi C_{\beta} \int_0^{\infty}\frac{e^{-\frac{\beta\rho^2}{4s}}}{s^{\beta}}\rho\, {\rm d}\rho
\le C_{\beta}s^{1-\beta}.
\end{align*}
Computing the time integral we obtain
\begin{align*}
\int_0^t \int_D |H(s,x,y)|^{\beta}\,{\rm d}y\, {\rm d}s
\le C_{\beta}\int_0^t s^{1-\beta}\, {\rm d}s \le C_{\beta}t^{2-\beta},
\end{align*}
which converges provided $\beta<2$.


\end{document}